\documentclass[11pt, oneside]{article}   	
\usepackage{geometry}                		
\geometry{letterpaper}                   		
\usepackage{graphicx}				
																	
\usepackage{amssymb}
\usepackage{amsmath}
\usepackage{amsthm}
\usepackage{stmaryrd}
\usepackage{tikz-cd}
\usepackage{extarrows}
\usepackage[mathscr]{euscript}
\usepackage{mathrsfs}
\usepackage[OT2,T1]{fontenc}
\DeclareSymbolFont{cyrletters}{OT2}{wncyr}{m}{n}
\DeclareMathSymbol{\Sha}{\mathalpha}{cyrletters}{"58}
\newcommand*{\A}{\ensuremath{\mathbf{A}}}

\usepackage{calligra}
\usepackage{mathrsfs}
\DeclareMathOperator{\Hom}{\mathscr{H}\text{\kern -3pt {\calligra\large om}}\,}
\DeclareMathOperator{\calExt}{\mathscr{E}\text{\kern -3pt {\calligra\large xt}}\,}
\DeclareMathOperator{\Ext}{Ext}
\DeclareMathOperator{\Pic}{Pic}
\DeclareMathOperator{\Jac}{Jac}

\newcommand*{\calO}{\mathcal{O}} 
\newcommand*{\Z}{\ensuremath{\mathbf{Z}}}
\newcommand*{\Q}{\ensuremath{\mathbf{Q}}}

\newcommand{\Ga}{{\mathbf{G}}_{\rm{a}}}
\DeclareMathOperator{\R}{R}
\newcommand{\calHom}{\mathscr{H}\mathit{om}}
\newcommand*{\address}{Einstein Institute of Mathematics, The Hebrew University of Jerusalem, Edmond J. Safra Campus, 91904, Jerusalem, Israel}
\newcommand*{\email}{zevrosengarten@gmail.com}

\usepackage{rotating}

\usepackage{bm}

\numberwithin{equation}{section}

\newtheorem{theorem}{Theorem}[section]
\newtheorem{lemma}[theorem]{Lemma}
\newtheorem{definition}[theorem]{Definition}
\newtheorem{proposition}[theorem]{Proposition}
\newtheorem{corollary}[theorem]{Corollary}

\theoremstyle{remark}
  \newtheorem{remark}[theorem]{Remark}
  
  \theoremstyle{remark}
  \newtheorem{example}[theorem]{Example}

\usepackage[OT2,T1]{fontenc}

\tikzset{commutative diagrams/.cd,
mysymbol/.style={start anchor=center,end anchor=center,draw=none}
}

\newcommand{\Gm}{{\mathbf{G}}_{\rm{m}}}

\usepackage{stmaryrd}
\usepackage{hyperref}

\title{TRANSLATION-INVARIANT LINE BUNDLES ON LINEAR ALGEBRAIC GROUPS}
\author{Zev Rosengarten \thanks{While completing this work, the author was supported by an ARCS Scholar Award, a Ric Weiland Graduate Fellowship, and a Zuckerman Postdoctoral Scholarship.}}

\date{}
\begin{document}
\maketitle

\begin{abstract}
We study the Picard groups of connected linear algebraic groups, and especially the subgroup of translation-invariant line bundles. We prove that this subgroup is finite over every global function field. We also utilize our study of these groups in order to construct various examples of pathological behavior for the cohomology of commutative linear algebraic groups over local and global function fields.
\end{abstract}

\tableofcontents{}

\section{Introduction}

Let $G$ be a linear algebraic group over a field $k$, i.e., a smooth affine $k$-group scheme. Sansuc \cite{sansuc} showed that when $G$ is connected and $k$ is perfect, the group $\Pic(G)$ of line bundles on $G$ up to isomorphism is finite. Over imperfect fields, however, this finiteness fails in general. For a smooth connected group scheme $G$ over an arbitrary field $k$, let $m, \pi_i: G \times G \rightarrow G$ ($i = 1, 2$) denote the multiplication and projection maps. In this paper, we investigate the subgroup
\[
\Ext^1(G, \Gm) := \{ \mathscr{L} \in \Pic(G) \mid m^*\mathscr{L} \simeq \pi_1^*\mathscr{L} \otimes \pi_2^*\mathscr{L}\} \subset \Pic(G)
\]
consisting of line bundles that are (universally) translation-invariant ``modulo the base.'' That is, $\mathscr{L} \in \Ext^1(G, \Gm)$ if and only if for every $k$-scheme $S$ and every $g \in G(S)$, if we let $t_g: G_S \rightarrow G_S$ denote translation by $g$, then $t_g^*\mathscr{L} = \mathscr{L}$ as elements of $\Pic(G_S)/\Pic(S)$. The group $\Ext^1(G, \Gm)$ is in some sense more natural than $\Pic(G)$ because it remembers the group structure of $G$.

Let us explain the reason for the notation $\Ext^1(G, \Gm)$. Consider the collection of extensions of algebraic groups
\[
1 \longrightarrow \Gm \longrightarrow E \longrightarrow G \longrightarrow 1.
\]
Any such extension is necessarily central, because conjugation induces a map $E \rightarrow {\rm{Aut}}_{\Gm/k}$ which must be constant since $E$ is connected and the latter scheme is \'etale. We may therefore make the collection of such extensions modulo the usual notion of equivalence (i.e., there exists a commutative diagram of extensions) into an abelian group via Baer sum, and we temporarily denote this group by $\Ext^1_{\rm{Yon}}(G, \Gm)$. Now any extension $E$ as above is in particular a $\Gm$-torsor over $G$, hence we obtain a map (which one can show to be a homomorphism)
\[
\Ext^1_{\rm{Yon}}(G, \Gm) \rightarrow {\rm{H}}^1(G, \Gm) = \Pic(G).
\]
This map induces an isomorphism $\Ext^1_{\rm{Yon}}(G, \Gm) \xrightarrow{\sim} \Ext^1(G, \Gm)$. (This is essentially \cite[Th.\,4.12]{colliot-thelene}.)
We should also remark that when $G$ is commutative, the above Yoneda Ext group agrees with the derived functor Ext group (Proposition \ref{derivedextagrees}).

The main purpose of the present paper is to prove the following result.

\begin{theorem}
\label{extisfinite}
If $G$ is a connected linear algebraic group over a global function field $k$, then $\Ext^1(G, \Gm)$ is finite.
\end{theorem}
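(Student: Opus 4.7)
The plan is a d\'evissage, using the structure of connected linear algebraic groups to reduce to the cases of tori, semisimple groups, and commutative unipotent groups, and then invoking specific arguments for each. The substance of the theorem --- the point where the global function field hypothesis is indispensable --- lies entirely in the unipotent case.

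The reduction exploits the canonical short exact sequences attached to $G$ (solvable radical, unipotent radical, commutator subgroup, or their pseudo-reductive analogues over imperfect $k$), together with the functoriality of Yoneda Ext and the long exact sequences it satisfies in the commutative setting via the identification with derived functor Ext in Proposition \ref{derivedextagrees}. The semisimple pieces contribute only a finite amount to $\Ext^1(-, \Gm)$: one passes to the simply connected cover (with finite multiplicative-type kernel) and invokes Sansuc's results over perfect fields combined with Galois descent. For a torus $T$, standard descent identifies $\Ext^1(T, \Gm)$ with $\mathrm{H}^1(\mathrm{Gal}_k, \widehat{T})$, where $\widehat{T}$ is the character lattice; this is finite over any field $k$, since $\widehat{T}$ is a finitely generated free $\Z$-module on which $\mathrm{Gal}_k$ acts through a finite quotient. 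None of these reduction steps uses that $k$ is a global function field.

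The main obstacle is the unipotent case: showing $\Ext^1(U, \Gm)$ is finite for a commutative connected unipotent $U$ over the imperfect global function field $k$. This group vanishes when $k$ is perfect but is generally nontrivial --- and can be infinite over more general imperfect bases --- so the globality of $k$ must enter essentially. I would further decompose $U$ via $1 \to U_s \to U \to U_w \to 1$, with $U_s$ the maximal $k$-split unipotent subgroup and $U_w$ the $k$-wound quotient, reducing to $U = \Ga$ and $U$ $k$-wound. For $\Ga$, one has an explicit description of $\Ext^1(\Ga, \Gm)$ in terms of $p$-polynomials modulo coboundaries (\`a la Rosenlicht), and finiteness should follow from a class-field-theoretic statement about a suitable quotient of $k^\times$. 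The wound case is the hardest; here I would aim to combine cohomological finiteness theorems for $k$-wound unipotent groups over global function fields (in the spirit of Oesterl\'e) with a local-global argument upgrading adelic or local finiteness to finiteness over $k$ itself. Arranging the d\'evissage so that the various local finiteness inputs combine coherently into a single global statement, rather than merely an adelic one, is the central technical hurdle I anticipate.
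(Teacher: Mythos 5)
Your d\'evissage is essentially the paper's (reduce via the derived group/abelianization and the maximal torus to the commutative unipotent case, then filter down to forms of $\Ga$), and those reduction steps are sound, with two small caveats: the non-commutative steps need an Ext exact sequence for central extensions by $\Gm$ of possibly non-commutative groups (the paper's Lemma \ref{extexactsequence}), and the case $U=\Ga$ is trivial since $\Ext^1(\Ga,\Gm)\subset\Pic(\A^1)=0$ --- no Rosenlicht-style computation with $p$-polynomials or class field theory is needed there.

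The genuine gap is in the wound case, which is the entire content of the theorem, and the strategy you sketch for it cannot work as stated. You propose to ``upgrade adelic or local finiteness to finiteness over $k$,'' but there is no local finiteness to upgrade: for a wound form $U$ of $\Ga$ of positive genus, $\Ext^1(U_{k_v},\Gm)$ is \emph{infinite} over every completion $k_v$ (Proposition \ref{infiniteextseparablyclosed}; it contains the $k_v$-points of a positive-dimensional subgroup of the Jacobian of the regular compactification of $U$). The finiteness over $k$ is thus an irreducibly global phenomenon, not a product of local ones. The paper's mechanism is duality: by Proposition \ref{derivedextagrees} and \cite[Cor.\,2.3.4]{rostateduality} one has $\Ext^1(U,\Gm)\simeq{\rm{H}}^1(k,\widehat U)$ with $\widehat U$ the fppf dual \emph{sheaf} $\calHom(U,\Gm)$ (not a character lattice, which vanishes for unipotent $U$), and the positive-dimensional Poitou--Tate sequence of \cite{rostateduality} then identifies ${\rm{H}}^1(k,\widehat U)^*$ with the cokernel of ${\rm{H}}^1(k,U)\rightarrow\oplus_v{\rm{H}}^1(k_v,U)$ (using ${\rm{H}}^2(k,U)=0$ and the adelic-to-local comparison); that cokernel is finite by Oesterl\'e, and finiteness of $B^*$ forces finiteness of $B$. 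Without this duality input --- or some substitute giving global control of $\Ext^1(U,\Gm)$ directly --- your outline does not close.
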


Theorem \ref{extisfinite} is false over every imperfect separably closed field as well as every local function field (Proposition \ref{infiniteextseparablyclosed}), so this result is truly arithmetic in nature. As a corollary of Theorem \ref{extisfinite}, we also obtain the following result.

\begin{theorem}
\label{extisfg}
Let $k$ be a global field, $G$ a smooth connected $k$-group scheme. Then Ext$^1(G, \mathbf{G}_m)$ is a finitely-generated abelian group.
\end{theorem}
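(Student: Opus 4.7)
The plan is to combine Theorem~\ref{extisfinite} and Sansuc's theorem with the Mordell--Weil type theorems on abelian varieties via a Chevalley d\'evissage.

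When $k$ is a number field, it is perfect, so Chevalley's structure theorem yields a short exact sequence
\[
1 \longrightarrow L \longrightarrow G \longrightarrow A \longrightarrow 1
\]
of smooth connected $k$-group schemes with $L$ affine and $A$ an abelian variety. The associated exact sequence of Yoneda $\Ext^{\bullet}(-, \Gm)$ groups (using that $\Gm$ is commutative, so all extensions by it are central, and invoking Proposition~\ref{derivedextagrees} to identify Yoneda and derived functor Ext in the commutative case) contains the portion
\[
\Ext^1(A, \Gm) \longrightarrow \Ext^1(G, \Gm) \longrightarrow \Ext^1(L, \Gm).
\]
Sansuc's theorem gives that $\Pic(L)$, and hence $\Ext^1(L, \Gm) \subseteq \Pic(L)$, is finite. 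The Mordell--Weil theorem shows $\Ext^1(A, \Gm) = A^{\vee}(k)$ is finitely generated. Hence $\Ext^1(G, \Gm)$ is finitely generated.

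When $k$ is a global function field, Theorem~\ref{extisfinite} handles the case when $G$ is affine. Otherwise, Chevalley's theorem may fail over the imperfect field $k$, due to the existence of Totaro's pseudo-abelian varieties. To bypass this, I would pass to a finite purely inseparable extension $k'/k$ over which the Chevalley decomposition
\[
1 \longrightarrow L' \longrightarrow G_{k'} \longrightarrow A' \longrightarrow 1
\]
of $G_{k'}$ is defined. Running the number field argument over $k'$, with Theorem~\ref{extisfinite} replacing Sansuc and Lang--N\'eron replacing Mordell--Weil, shows $\Ext^1(G_{k'}, \Gm)$ is finitely generated. It then remains to control the kernel of the base-change map $\Ext^1(G, \Gm) \to \Ext^1(G_{k'}, \Gm)$.

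The main obstacle is this descent step. Since $k'/k$ is purely inseparable of degree $p^n$, some power of the relative Frobenius on $G$ factors through the base change to $k'$ and back, so every element of the kernel is annihilated by a Frobenius pullback; a careful analysis of the Frobenius action on translation-invariant line bundles, combined with the finiteness afforded by Theorem~\ref{extisfinite} applied to affine pieces of Frobenius twists, should bound the kernel. I expect this Frobenius/descent argument to be the most delicate part of the proof.
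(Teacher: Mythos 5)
Your number field argument and your treatment of the affine case over a global function field match the paper. The genuine gap is in the remaining case (non-affine $G$ over a global function field $k$): the descent step you defer to the end is not a routine verification, and the mechanism you sketch does not suffice. Passing to a purely inseparable $k'/k$ of degree $p^n$ and using a norm/Frobenius argument can at best show that the kernel of $\Ext^1(G, \Gm) \rightarrow \Ext^1(G_{k'}, \Gm)$ is killed by $p^n$; but a $p^n$-torsion abelian group need not be finitely generated (e.g.\ an infinite direct sum of copies of $\Z/p\Z$), so even a completed version of that analysis would not close the argument without a separate finiteness input for the kernel. As written, the proposal proves finite generation of $\Ext^1(G_{k'}, \Gm)$ but not of $\Ext^1(G, \Gm)$.

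The paper avoids this entirely: in characteristic $p > 0$ it invokes the structure theorem \cite[Thm.\,A.3.9]{cgp}, which works over the imperfect field $k$ itself and exhibits $G$ as an extension of a connected linear algebraic $k$-group by a semi-abelian variety over $k$. Combined with Lemma \ref{extexactsequence}, this reduces the problem to the affine case (Theorem \ref{extisfinite}) and the abelian variety case (Mordell--Weil), with no base change and hence no descent problem. If you want to salvage your route, you would need to replace the Frobenius heuristic by an actual finiteness argument for the kernel of the restriction map along $k'/k$; the cleaner fix is simply to substitute \cite[Thm.\,A.3.9]{cgp} for Chevalley in positive characteristic. One further minor point: for non-commutative $G$ the relevant exact sequence is the one supplied by Lemma \ref{extexactsequence}, not a derived-functor sequence, so the appeal to Proposition \ref{derivedextagrees} is only available after you have reduced to commutative pieces.
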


One crucial new tool used in the proof of Theorem \ref{extisfinite} is the recent generalization by the author of classical Tate duality to positive-dimensional groups obtained in \cite{rostateduality}. Another is an unpublished result of Ofer Gabber for linear algebraic groups over arbitrary fields which he communicated to the author and whose proof (due to Gabber) we will give here since we shall require it:

\begin{theorem}(O.\,Gabber)
\label{unirationalpic}
Let $k$ be a field, and let $G$ be a smooth connected $k$-group scheme such that $G_{k_s}$ is unirational. Then $\Pic(G) = \Ext^1(G, \Gm)$, and this common group is finite.
\end{theorem}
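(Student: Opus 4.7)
My strategy is to first establish finiteness of $\Pic(G)$, and then to deduce the equality $\Pic(G) = \Ext^1(G, \Gm)$ by a rigidity argument. For the finiteness, I would reduce from $k$ to $\overline{k}$ in two stages: separable descent from $k$ to $k_s$, and flat descent from $k_s$ to $\overline{k}$. The first step is controlled by the Hochschild--Serre exact sequence
\[
0 \longrightarrow H^1(k_s/k, \mathcal{O}(G_{k_s})^\times) \longrightarrow \Pic(G) \longrightarrow \Pic(G_{k_s})^{\mathrm{Gal}(k_s/k)},
\]
combined with Rosenlicht's theorem which identifies $\mathcal{O}(G_{k_s})^\times$ as an extension of the finitely generated character lattice $X^*(G_{k_s})$ by $k_s^\times$: Hilbert 90 kills the contribution from $k_s^\times$, while $H^1$ of $\mathrm{Gal}(k_s/k)$ on a finitely generated lattice factors through a finite quotient and is itself finite. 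A parallel Amitsur/fppf-cohomology argument along the purely inseparable $\overline{k}/k_s$ handles the second step. Once over $\overline{k}$, Sansuc's theorem immediately gives that $\Pic(G_{\overline{k}})$ is finite, and the descent propagates this back to $\Pic(G)$.

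For the equality $\Pic(G) = \Ext^1(G, \Gm)$, I fix $\mathscr{L} \in \Pic(G)$ and form the line bundle
\[
M := m^*\mathscr{L} \otimes \pi_1^*\mathscr{L}^{-1} \otimes \pi_2^*\mathscr{L}^{-1}
\]
on $G \times G$, which is trivial on both $\{e\} \times G$ and $G \times \{e\}$ by direct computation. The natural transformation $G \to \underline{\Pic}_{G/k}$ tracking the class of $t_g^*\mathscr{L} \otimes \mathscr{L}^{-1}$ takes values in a target whose geometric points form the finite group $\Pic(G_{\overline{k}})$, so the map factors through an \'etale group scheme. Since $G$ is geometrically connected and sends $e$ to $0$, it is identically zero; unwinding this through $S$-valued points of an arbitrary $k$-scheme $S$ yields exactly the condition $\mathscr{L} \in \Ext^1(G, \Gm)$.

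The main obstacle I anticipate is the flat descent from $\overline{k}$ down to $k_s$ in positive characteristic, where \'etale/Galois-cohomology arguments must be replaced by fppf/Amitsur cohomology and where care is needed to handle the units on $G$, which are nontrivial when $X^*(G) \neq 0$. An alternative route to finiteness would be via a smooth proper compactification $\overline{G}$ of $G$ (by Nagata plus resolution or alteration), which is smooth proper unirational and so has vanishing Albanese; the localization sequence $\bigoplus_i \mathbf{Z}\cdot [D_i] \to \Pic(\overline{G}) \to \Pic(G) \to 0$ would then give $\Pic(G)$ finitely generated, which combined with its being torsion (from the Sansuc step) yields finiteness --- but this approach must confront the non-existence of smooth compactifications over imperfect fields directly.
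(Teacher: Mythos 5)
Both halves of your proposal contain the same essential gap: neither actually uses the unirationality of $G_{k_s}$, and each would, if valid, prove a statement that the paper shows to be false. For the finiteness, the Hochschild--Serre step from $k$ to $k_s$ is fine (it is Sansuc's Lemma 6.3), but the step you flag as ``the main obstacle'' --- descent along $\overline{k}/k_s$ --- is not an obstacle to be finessed; it is the entire content of the theorem. The kernel of $\Pic(G_{k_s}) \to \Pic(G_{\overline{k}})$ is typically \emph{infinite} (e.g.\ for wound forms of $\Ga$ of positive genus over an imperfect separably closed field, $\Pic(G_{\overline{k}})=0$ while $\Pic(G)$ is infinite; see Propositions \ref{picjacseq} and \ref{infiniteextseparablyclosed}), and it is not computed by any Amitsur-type unit cohomology since $\overline{k}/k_s$ is an infinite purely inseparable extension. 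Since Sansuc's finiteness over $\overline{k}$ holds for \emph{every} connected linear algebraic group, your descent scheme, were it to work, would prove $\Pic(G)$ finite unconditionally, which is false. Unirationality must enter the finiteness proof in an essential way: the paper filters $G$ by $\mathscr{D}G$ and $G^{\mathrm{ab}}$, handles the perfect case by generation by tori plus injectivity of $\Ext^1(G,\Gm) \to \Ext^1_{\overline{k}}(G,\Gm)$ (using $\calHom(G,\Gm)=0$, Proposition \ref{perfectgpspic}), and in the commutative case uses Serre's generalized Jacobians to exhibit $G$ as a quotient of $\R_{A/k}(\Gm)$ --- an open subscheme of affine space with trivial Picard group --- so that Lemma \ref{extexactsequence} concludes.

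The equality argument has the analogous defect. Your seesaw step passes from triviality of $M = m^*\mathscr{L}\otimes\pi_1^*\mathscr{L}^{-1}\otimes\pi_2^*\mathscr{L}^{-1}$ on fibers $\{g\}\times G$ (for geometric points $g$) and on $G\times\{e\}$ to triviality of $M$ on $G\times G$. For non-proper $G$ over an imperfect field this implication is precisely what fails: it amounts to $\Pic(G\times G)=\Pic(G)\oplus\Pic(G)$, which Proposition \ref{pic=/=ext} shows is false for forms of $\Ga$ of positive genus (and the relative Picard functor of an affine scheme is not representable, so the ``factors through an \'etale group scheme'' step has no foundation). Again your argument never invokes unirationality and would prove $\Pic(G)=\Ext^1(G,\Gm)$ for all connected linear algebraic groups, contradicting that same proposition. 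Gabber's proof of Proposition \ref{pic=extunirational} instead constructs a normal projective compactification $G^c$ to which the translation action $G\times G^c \to G^c$ extends, extends $\mathscr{L}$ to a rank-one reflexive sheaf $\mathscr{L}^c$, forms the theta-group $E$ of pairs $(g,\phi\colon t_g^*\mathscr{L}^c \simeq \mathscr{L}^c)$ with $\ker(E\to G)=\Gm$, and proves surjectivity of $E\to G$ using an fppf cover $U\to G$ by an open subscheme of affine space together with $\mathrm{Cl}(U\times G^c)=\mathrm{Cl}(U)\oplus\mathrm{Cl}(G^c)$ and $\mathrm{Cl}(U)=0$; that is the one place unirationality is used, and it cannot be omitted.
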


\begin{remark}
The hypotheses of Theorem \ref{unirationalpic} hold for any connected reductive group $G$ \cite[Chap.\,V, Thm.\,18.2(ii)]{borel}. (In fact, $G_{k_s}$ is even rational, as follows from the open cell decomposition associated to a split maximal torus.) It follows that if $G$ is connected reductive (over any field), then $\Pic(G) = \Ext^1(G, \Gm)$.
\end{remark}

Theorem \ref{unirationalpic} is a slight generalization of an unpublished theorem of Gabber which he communicated to the author (in the original, $G$ was assumed to be unirational over $k$). We give Gabber's elegant proof in \S \ref{sectionunirational}. We remark that the unirationality assumption in Theorem \ref{unirationalpic} implies that $G$ is a connected linear algebraic group, by passing to $\overline{k}$ and using Chevalley's Theorem and the fact that abelian varieties do not admit nonconstant maps from open subsets of affine space.

If $G$ is a connected linear algebraic group over a perfect field $k$, then $\Pic(G) = \Ext^1(G, \Gm)$. This may be deduced from the fact that (thanks to \cite[Lem.\,6.6(i)]{sansuc} and the rationality of linear algebraic groups over algebraically closed fields) for a pair of such groups $G$ and $H$, the map $\pi_G^* + \pi_H^*: \Pic(G) \oplus \Pic(H) \rightarrow \Pic(G \times H)$ is an isomorphism, where $\pi_G: G \times H \rightarrow G$ is the projection and $\pi_H$ is defined similarly. We write that $\Pic(G \times H) = \Pic(G) \oplus \Pic(H)$. Over imperfect fields, this equality fails in general (Proposition \ref{pic=/=ext}). Our next result, however, shows that under certain hypotheses we do recover this nice behavior even over imperfect fields.

\begin{theorem}
\label{picismultiplicativeforgroupswithfinitepic}
Let $G$ be a connected linear algebraic group over a field $k$, and assume that $\Pic(G)$ is finite.
\begin{itemize}
\item[(i)] If $G(k)$ is Zariski dense in $G$, then $\Pic(G) = \Ext^1(G, \Gm)$.
\item[(ii)] If $k$ is separably closed, then $\Pic(G) = \Ext^1(G, \Gm)$.
\end{itemize}
\end{theorem}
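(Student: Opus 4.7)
The plan is to reduce the theorem to the Künneth-type identity
\[
\pi_1^* \oplus \pi_2^* \colon \Pic(G) \oplus \Pic(G) \xrightarrow{\ \sim\ } \Pic(G \times G),
\]
and then to prove this identity using finiteness of $\Pic(G)$ together with Zariski density of $G(k)$.

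First, part (ii) follows from part (i): over a separably closed field, every smooth $k$-variety has Zariski-dense $k$-rational points, so $G(k)$ is automatically dense in $G$. Next, for (i), fix $\mathcal{L} \in \Pic(G)$ and set $\mathcal{M} := m^*\mathcal{L} \otimes \pi_1^*\mathcal{L}^{-1} \otimes \pi_2^*\mathcal{L}^{-1} \in \Pic(G \times G)$; the condition $\mathcal{L} \in \Ext^1(G,\Gm)$ is equivalent to $\mathcal{M} \simeq \mathcal{O}_{G \times G}$. Straightforward restriction calculations show $\mathcal{M}$ is trivial on $\{e\} \times G$ and on $G \times \{e\}$ (the composition of a projection with a constant section trivializes pullbacks), and that $\mathcal{M}|_{\{g\} \times G} \simeq t_g^*\mathcal{L} \otimes \mathcal{L}^{-1}$ in the finite group $\Pic(G)$ for any $g \in G(k)$. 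Granting the Künneth identity, write $\mathcal{M} = \pi_1^*\mathcal{A} \otimes \pi_2^*\mathcal{B}$; restriction to the two sections and $\Pic(\operatorname{Spec} k) = 0$ force $\mathcal{A}$ and $\mathcal{B}$ trivial, hence $\mathcal{M}$ trivial.

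The main obstacle is establishing the Künneth identity under our hypotheses (it fails for general connected linear algebraic groups over imperfect fields, as the excerpt flags). Injectivity is immediate from the two section restrictions. For surjectivity, given $\mathcal{N} \in \Pic(G \times G)$, normalize by twisting with $\pi_1^*(\mathcal{N}|_{G \times \{e\}})^{-1} \otimes \pi_2^*(\mathcal{N}|_{\{e\} \times G})^{-1}$ so that $\mathcal{N}$ is trivial on both sections, and show this forces $\mathcal{N}$ trivial. I would view the assignment $g \mapsto [\mathcal{N}|_{\{g\} \times G_T}]$ for $g \in G(T)$ as a pointed morphism of fppf sheaves from $G$ into the sheafification $\underline{\Pic}_{G/k}$ of $T \mapsto \Pic(G_T)/\Pic(T)$. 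The crux of the argument should be that finiteness of $\Pic(G)$ forces $\underline{\Pic}_{G/k}$ to be sufficiently discrete — representable by an étale $k$-group scheme on an appropriate class of test schemes — so that a morphism from the connected group $G$ sending $e$ to $0$ vanishes identically on $G(k)$; Zariski density of $G(k)$ together with an fppf descent (or seesaw-type) step then upgrades this pointwise vanishing to triviality of $\mathcal{N}$ itself. The key technical difficulty is making this étale-representability precise in the imperfect setting, where the classical Picard-scheme representability machinery does not apply directly to the affine group $G$; the hypotheses "$\Pic(G)$ finite" and "$G(k)$ Zariski dense" must be combined to extract the requisite discreteness by hand.
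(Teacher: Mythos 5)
Your reduction of the theorem to the K\"unneth identity $\Pic(G)\oplus\Pic(G)\simeq\Pic(G\times G)$ is correct and is exactly how the paper proceeds (it proves the more general $\Pic(G\times H)=\Pic(G)\oplus\Pic(H)$ in Theorem \ref{Pic(GXH)}). But the central step of your argument has a genuine gap, and the mechanism you propose for it is actually false. Finiteness of $\Pic(G)$ is a statement about $k$-points only; it does not make the sheafification of $T\mapsto\Pic(G_T)/\Pic(T)$ discrete or \'etale-representable. The paper's own examples show this concretely: for a wound $k$-form $U$ of $\Ga$ of positive genus over a global function field with $0<g<p-1$ (Example \ref{zariskidenseneeded}), $\Pic(U)$ is finite because $J(k)$ is finite, yet the degree-zero part of the Picard functor is governed by the Jacobian $J=\Jac(C)$, a smooth connected unipotent group of positive dimension; after any base change enlarging $J$'s points, $\Pic(U_K)$ becomes infinite. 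So "$\Pic(G)$ finite $\Rightarrow$ the relative Picard sheaf is discrete" cannot be "made precise by hand" --- it is simply not true, and the vanishing of your morphism $G\to\underline{\Pic}_{G/k}$ cannot be extracted this way.

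Moreover, you have located the difficulty in the wrong place. The paper needs much less than vanishing of the fiber classes: since $\Pic(G)$ is finite and $G(k)$ is dense in the irreducible $G$, pigeonhole gives a Zariski-dense set $S\subset G(k)$ on which $g\mapsto[\mathcal{N}|_{G\times\{g\}}]$ is \emph{constant}, and one twists that constant away. The entire content is then the step you dismiss as "an fppf descent (or seesaw-type) step": showing that a line bundle on $G\times H$ that is trivial on $\{e\}\times H$ and on $G\times\{h\}$ for a dense set of $h\in H(k)$ is trivial. The classical seesaw theorem requires a proper base and is unavailable here. The paper's Lemma \ref{pictrivialfibers} proves this by representing the bundle by an effective divisor $D$ avoiding $\{e\}\times H$ (Lemma \ref{effective}), writing $D=\operatorname{div}(f)$ with $f\in k_{\rm per}[G\times H]$ via the K\"unneth decomposition over the perfect closure, normalizing $f$ using Rosenlicht's unit theorem and the finiteness of $\widehat{G}(k_{\rm per})/\widehat{G}(k)$, and then descending $f$ to $k[G\times H]$ by a faithfully flat descent computation that uses the density of $S$. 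Without an argument of this kind your proof does not close.
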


In fact, we prove a somewhat more general result; see Theorem \ref{Pic(GXH)}. The hypothesis that $G(k)$ is Zariski dense in $G$ is necessary; see Example \ref{zariskidenseneeded}.

Finally, we present applications of our study of Picard groups to the cohomology of algebraic groups over local and global fields, and especially to constructing counterexamples of various types. In particular, we show that, in contrast to the number field setting, for a connected linear algebraic group $G$ over a global function field $k$, and a finite nonempty set $S$ of places of $k$, the $S$-Tate-Shafarevich set
\[
\Sha_S(G) := \ker\left({\rm{H}}^1(k, G) \longrightarrow \prod_{v \notin S} {\rm{H}}^1(k_v, G)\right)
\]
can be infinite (Corollary \ref{formGainfinitesha}). This is impossible when $S = \emptyset$ by \cite[Thm. 1.3.3(i)]{conrad}. (The original version of that result incorrectly stated that the same holds for any finite $S$; see Remark \ref{conraderrata}.)
\newline

\noindent {\bf Acknowledgments.} I would like to thank Brian Conrad for helpful comments and suggestions, and Ofer Gabber for telling the author about Theorem \ref{unirationalpic} and its proof, which both simplified and generalized the original version of this paper.

\noindent {\bf Notation and Conventions.} Throughout this paper, $k$ denotes a field, and $k_s$, $k_{{\rm{per}}}$, and $\overline{k}$ denote separable, perfect, and algebraic closures of $k$, respectively. When it appears, $p$ denotes the characteristic of $k$. A {\em linear algebraic group} over $k$ (or linear algebraic $k$-group) is a smooth affine $k$-group scheme. If we have two $k$-schemes $X$ and $Y$, then we write $\Pic(X \times Y) = \Pic(X) \oplus \Pic(Y)$ if the map $\pi_X^* + \pi_Y^*: \Pic(X) \oplus \Pic(Y) \rightarrow \Pic(X \times Y)$ is an isomorphism, where $\pi_X: X \times Y \rightarrow X$ denotes the projection map, and similarly for $\pi_Y$. (If $X(k)$ and $Y(k)$ are nonempty -- e.g., if $X$ and $Y$ are $k$-group schemes -- then this map is injective, so the real issue is the surjectivity.)

\section{Groups with finite Pic}
\label{sectionfinitepic}

If $G$ and $H$ are connected linear algebraic groups over a field $k$ such that $G_{k_s}$ is rational, then Pic$(G \times H) = {\rm{Pic}}(G) \oplus {\rm{Pic}}(H)$ \cite[Lem.\,6.6(i)]{sansuc}. In particular, this holds over perfect fields (since any connected linear algebraic group over an algebraically closed field is rational) and applied with $G = H$ yields the equality $\Pic(G) = \Ext^1(G, \Gm)$. The goal of this section is to prove Theorem \ref{picismultiplicativeforgroupswithfinitepic}, which roughly says that in this respect groups with finite Pic behave as if they were rational (or as if the underlying field were perfect). In fact, by analogy with the situation with perfect fields, we prove the more general equality $\Pic(G \times H) = \Pic(G) \oplus \Pic(H)$ under suitable hypotheses on $G$ and $H$ (Theorem \ref{Pic(GXH)} below). 

For an example showing the necessity of the hypothesis that $G(k)$ is Zariski dense in $G$ in Theorem \ref{picismultiplicativeforgroupswithfinitepic}(i) (and therefore the necessity of the hypothesis that $H(k)$ is Zariski dense in $H$ in Theorem \ref{Pic(GXH)}(i)), see Example \ref{zariskidenseneeded}. We begin with the following simple lemma.

\begin{lemma}
\label{effective}
Let $X$ be a Noetherian normal (i.e., all local rings of $X$ are normal) affine scheme, and let $Z \subset X$ be a nonempty closed subscheme. Every $\mathscr{L} \in {\rm{Pic}}(X)$ is of the form $\mathscr{L} = \mathcal{O}(D)$ for some effective Weil divisor $D$ on $X$ not containing $Z$.
\end{lemma}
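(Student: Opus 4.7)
My plan would be to produce $D$ by exhibiting a global section of $\mathscr{L}$ that is nonvanishing at the generic point of every irreducible component of $Z$. Since $X$ is Noetherian, it has only finitely many connected components, each of which is integral by normality, and both $\mathscr{L}$ and $Z$ decompose across connected components, so I would first reduce to the case where $X = \mathrm{Spec}(A)$ is integral. Let $M := \Gamma(X, \mathscr{L})$, a finitely generated projective $A$-module of rank one, and let $p_1, \ldots, p_n \subset A$ be the (finitely many) primes corresponding to the generic points $\eta_1, \ldots, \eta_n$ of the irreducible components of $Z$.

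The key idea I would use is to localize at the multiplicative set $S := A \setminus \bigcup_{i=1}^n p_i$. Then $S^{-1}A$ is a Noetherian semi-local ring, with maximal ideals precisely the $p_i S^{-1}A$, and $S^{-1}M$ is a finitely generated projective module of constant rank one over it. Since finitely generated projective modules of constant rank over a semi-local ring are free, $S^{-1}M$ is free of rank one. Picking a generator and clearing denominators (elements of $S$ are units in $S^{-1}A$), I obtain an element $m \in M$ whose image generates $S^{-1}M$ as an $S^{-1}A$-module, and hence generates each further localization $M_{p_i}$ as an $A_{p_i}$-module. In particular $m$ is a basis of the free rank-one $A_{p_i}$-module $M_{p_i}$, so $m$ does not vanish at $\eta_i$.

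Setting $D := \mathrm{div}(m)$ will then finish the argument: since $m$ is a global (not just rational) section of $\mathscr{L}$, the divisor $D$ is an effective Weil divisor with $\mathcal{O}(D) \cong \mathscr{L}$; and since $m$ generates $M_{p_i}$, it also generates $M_q$ for every height-one prime $q \subseteq p_i$ (further localization preserves generation), so $\mathrm{ord}_q(m) = 0$ for all such $q$, and hence no prime divisor containing $Z_i := \overline{\{\eta_i\}}$ appears in the support of $D$. Thus $D$ does not contain $Z$.

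The main nontrivial step, as I see it, is really just the choice of the localization at $S$: once this is in place, the argument reduces to the standard fact that finitely generated projectives of constant rank over a semi-local ring are free, together with routine bookkeeping with Weil divisors on a normal Noetherian scheme.
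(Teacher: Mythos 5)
Your argument is correct, but it takes a genuinely different route from the paper's. The paper trivializes $\mathscr{L}$ on a dense open subset $U$ containing the generic point of (an irreducible component of) $Z$, takes the divisor $D$ of a generating section of $\mathscr{L}|_U$ --- whose support does not contain $Z$ but which need not be effective --- and then repairs effectivity by adding a large multiple of $\mathrm{div}(a)$ for a regular function $a$ chosen by prime avoidance to vanish along the negative-multiplicity primes but not on $Z$. You instead produce the effective divisor in one step, as the divisor of a genuine global section of $\mathscr{L}$: localizing at the complement of the union of the primes $p_1, \dots, p_n$ of the generic points of the components of $Z$ gives a semi-local ring, over which the rank-one projective $S^{-1}M$ is free, and clearing denominators yields $m \in M = \Gamma(X, \mathscr{L})$ generating every $M_{p_i}$, hence every $M_q$ for height-one $q \subseteq p_i$; thus $\mathrm{div}(m)$ is effective and its support contains no prime divisor through any component of $Z$. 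Your version avoids any a posteriori correction and proves slightly more (no irreducible component of $Z$ lies in the support, whereas the paper arranges this only for one component, which already suffices for the stated conclusion $Z \not\subset \mathrm{supp}(D)$); the paper's version is more elementary, using only a local trivialization and prime avoidance rather than freeness of projectives over semi-local rings. The one loose end in your write-up is the reduction to $X$ integral: on a connected component of $X$ disjoint from $Z$ your multiplicative set is all of $A$ and the localization degenerates, but there you may simply take the divisor of any nonzero global section of $\mathscr{L}$ (which exists because $\mathscr{L}$ is globally generated on an affine scheme), the condition involving $Z$ being vacuous on such a component.
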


\begin{proof}
This argument is a slightly modified version of an argument suggested by Ofer Gabber. We may assume (by replacing $Z$ with one of its irreducible components and then replacing it with its reduced subscheme) that $Z$ is integral. Let $\eta$ be the generic point of $Z$. Let $U \subset X$ be a dense open subset containing $\eta$ such that $\mathscr{L}|_U$ is trivial. We can find such $U$ by choosing it to be a union of disjoint open neighborhoods of $\eta$ and the generic points of the irreducible components of $X$ not containing $Z$ such that $\mathscr{L}$ trivializes on each such neighborhood. Then if we choose a generating section of $\mathscr{L}|_U$, its divisor $D$ does not contain $\eta$ (hence does not contain $Z$), and $\mathscr{L} \simeq \calO(D)$. The only problem is that $D$ may not be effective.

To remedy this, let $\mathfrak{p}_1,..., \mathfrak{p}_m \subset A := \Gamma(X, \mathcal{O}_X)$ be the codimension-one primes appearing with negative multiplicity in $D$, and let $\mathfrak{q}_Z$ be the prime corresponding to $Z$. Since $Z$ is not contained in supp$(D)$, we have $\mathfrak{p}_i \not \subset \mathfrak{q}_Z$ for each $i$. We may therefore choose $a \in (\mathfrak{p}_1 \cap ... \cap \mathfrak{p}_m) - \mathfrak{q}_Z$. (Indeed, for each $i = 1, \dots, m$, choose $a_i \in \mathfrak{p}_i - \mathfrak{q}_Z$. Then $a := a_1 \dots a_m \in (\mathfrak{p}_1 \cap ... \cap \mathfrak{p}_m) - \mathfrak{q}_Z$ because $\mathfrak{q}_Z$ is prime.) Then replacing $D$ with $D + m\cdot \mbox{div}(a)$ for sufficiently large $m$ yields an effective linearly equivalent divisor still not containing $Z$. This proves the lemma.
\end{proof}

The following lemma is the key to proving Theorem \ref{picismultiplicativeforgroupswithfinitepic}.

\begin{lemma}
\label{pictrivialfibers}
Let $G$ and $H$ be connected linear algebraic groups over a field $k$, and let $\mathscr{L} \in {\rm{Pic}}(G \times H)$. Suppose that there is $g \in G(k)$ such that $\mathscr{L}|_{\{g\} \times H} = 0$, and there is a Zariski dense set of rational points $S_H \subset H(k)$ such that $\mathscr{L}|_{G \times \{h\}} = 0$ for every $h \in S_H$. Then $\mathscr{L}$ is trivial.
\end{lemma}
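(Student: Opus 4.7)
\emph{Plan.} First, I pull $\mathscr{L}$ back along the automorphism $(x,y) \mapsto (g^{-1}x, y)$ of $G \times H$; this preserves all hypotheses and lets me assume $g = e_G$.

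Next, I apply Lemma \ref{effective} to $G \times H$ (smooth affine, hence Noetherian normal) with the nonempty integral closed subscheme $\{e_G\} \times H$, writing $\mathscr{L} \cong \mathcal{O}(D)$ for an effective Weil divisor $D$ not containing $\{e_G\} \times H$. Since $\mathscr{L}|_{\{e_G\} \times H}$ is trivial, the effective Cartier divisor $D|_{\{e_G\} \times H}$ is principal, equal to $\mathrm{div}_H(\psi)$ for some nonzero $\psi \in \Gamma(H, \mathcal{O}_H)$. Subtracting $\pi_H^*\mathrm{div}_H(\psi)$ yields a new divisor (which I re-denote $D$) still representing $\mathscr{L}$, whose support is now disjoint from $\{e_G\} \times H$ — though $D$ need no longer be effective. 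Decomposing $D = D^{\mathrm{hor}} + \pi_H^*E$ into its horizontal components (those dominating $H$, collected in $D^{\mathrm{hor}}$, which remains effective since the subtraction was purely vertical) and a vertical pullback from some divisor $E$ on $H$, I get $\mathscr{L} \cong \mathcal{O}(D^{\mathrm{hor}}) \otimes \pi_H^*\mathcal{O}_H(E)$.

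The goal is then to show $D^{\mathrm{hor}}$ is principal on $G \times H$: once $D^{\mathrm{hor}} = \mathrm{div}(F)$ for some rational $F$, we will have $\mathscr{L} \cong \pi_H^*\mathcal{O}_H(E)$, and restricting back to $\{e_G\} \times H$ together with its triviality will force $\mathcal{O}_H(E) = 0$, whence $\mathscr{L} = 0$. To attack principality, I use the hypothesis on $S_H$: for each $h$ in the still-dense set $S_H \setminus \mathrm{supp}(E)$, the restriction $D^{\mathrm{hor}}|_{G \times \{h\}}$ is principal on $G$, and because the support of $D$ misses $\{e_G\} \times H$ (so in particular $(e_G, h) \notin D^{\mathrm{hor}}$), one can write $D^{\mathrm{hor}}|_{G \times \{h\}} = \mathrm{div}_G(f_h)$ for a unique $f_h \in \mathcal{O}(G)$ with $f_h(e_G) = 1$.

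The main obstacle, which I expect to be the heart of the argument, is to show that this family $(f_h)$ assembles into a single global regular function $F \in \Gamma(G \times H, \mathcal{O}_{G \times H})$ with $\mathrm{div}(F) = D^{\mathrm{hor}}$. Heuristically one expects this to follow from flatness of (the irreducible components of) $D^{\mathrm{hor}}$ over an open dense subset of $H$, combined with the Zariski density of $S_H$ (so the normalized fiberwise $f_h$ determine $F$ on a dense set and force regularity by continuity on the smooth variety $G \times H$); but the algebraic globalization from fiberwise trivializations to an absolute regular function is where the technical work concentrates.
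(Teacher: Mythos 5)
Your reduction steps are mostly fine, modulo one slip: subtracting $\pi_H^*\mathrm{div}_H(\psi)$ makes $D$ restrict to the zero divisor on $\{e_G\}\times H$, but it does \emph{not} make $\mathrm{supp}(D^{\mathrm{hor}})$ disjoint from that slice --- a horizontal component can still pass through isolated points $(e_G,h)$, so to normalize $f_h(e_G)=1$ you would need to shrink $S_H$ to avoid those $h$ (harmless, and the paper does the analogous shrinking). The real problem is that the step you defer --- assembling the fiberwise trivializations $f_h$ into a single global $F$ with $\mathrm{div}(F)=D^{\mathrm{hor}}$ --- is not a routine globalization but is essentially the entire content of the lemma, and no ``flatness plus density plus continuity'' heuristic will produce it. Triviality of $\mathcal{O}(D^{\mathrm{hor}})$ on a Zariski-dense set of closed fibers $G\times\{h\}$ does not by itself imply triviality on the generic fiber $G_{k(H)}$: there is no properness available to run a seesaw-type argument, and for affine varieties over imperfect fields there is no specialization-injectivity for Picard groups to invoke. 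Even granting generic triviality, you would only trivialize over $G\times V$ for some dense open $V\subset H$, leaving the vertical discrepancy unresolved. Some global input about $\Pic(G\times H)$ is indispensable, and your outline contains none.

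The paper supplies exactly this input by base-changing to the perfect closure $k_{\mathrm{per}}$: there Sansuc's theorem (resting on the rationality of connected linear algebraic groups over algebraically closed fields) gives $\Pic(G_{k_{\mathrm{per}}}\times H_{k_{\mathrm{per}}})=\Pic(G_{k_{\mathrm{per}}})\oplus\Pic(H_{k_{\mathrm{per}}})$, so the fiberwise hypotheses immediately force $D=\mathrm{div}(f)$ for a single $f\in k_{\mathrm{per}}[G\times H]$. The actual work is then a \emph{descent} problem from $k_{\mathrm{per}}$ to $k$: one normalizes $f$ using Rosenlicht's unit theorem $k_{\mathrm{per}}[G]^{\times}=k_{\mathrm{per}}^{\times}\cdot\widehat{G}(k_{\mathrm{per}})$ together with the finiteness of $\widehat{G}(k_{\mathrm{per}})/\widehat{G}(k)$ so that its restrictions to the distinguished fibers lie in $k[G]$, and then checks by faithfully flat descent (comparing the two pullbacks to $k_{\mathrm{per}}\otimes_k k_{\mathrm{per}}$ and using the density of $S_H$) that $f\in k_{\mathrm{per}}^{\times}\cdot k[G\times H]$. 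Since your proposal has neither the passage to $k_{\mathrm{per}}$ nor any substitute for it, the proof as outlined has a genuine gap at its central step.
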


\begin{proof}
By Lemma \ref{effective}, there is an effective divisor $D$ on $G \times H$ such that $\{g\} \times H \not \subset \mbox{supp}(D)$ and such that $\mathscr{L} \simeq \mathcal{O}(D)$. We need to show that $D = \mbox{div}(F)$ for some $F \in k[G \times H]$. We will abuse notation and also use the notation $D$ for the divisor $D_{k_{\rm{per}}}$ on $G_{k_{\rm{per}}} \times_{k_{\rm{per}}} H_{k_{\rm{per}}}$.

We have $\Pic(G_{k_{\rm{per}}} \times H_{k_{\rm{per}}}) = \Pic(G_{k_{\rm{per}}}) \oplus \Pic(H_{k_{\rm{per}}})$. Our assumptions therefore imply that there exists $f \in k_{\rm{per}}[G \times H]$ such that $\mbox{div}(f) = D$. Our assumptions also imply that $f|_{\{g\} \times H} \in k_{\rm{per}}[H]^{\times}\cdot k[H]$ and $f|_{G \times \{h\}} \in k_{\rm{per}}[G]^{\times} \cdot k[G]$ for all $h \in S_H$.

We have $k_{\rm{per}}[G]^{\times} = k_{\rm{per}}^{\times} \cdot \widehat{G}(k_{\rm{per}})$ by Rosenlicht's Unit Theorem. Since $\widehat{G}(k_{\rm{per}})/\widehat{G}(k)$ is finite (Lemma \ref{finitequotientchars}), there is some $\chi \in \widehat{G}(k_{\rm{per}})$ and some Zariski dense subset $S_H' \subset S_H$ such that $f|_{G \times \{h\}} \in \chi \cdot k_{\rm{per}}^{\times} \cdot k[G]$ for every $h \in S_H'$. (Here we are using the connectedness of $H$.) Thus, dividing $f$ by $\chi \in k[G]^{\times} \subset k[G \times H]^{\times}$, which doesn't affect div$(f)$, and renaming $S_H'$ as $S_H$, we may assume that $f|_{G \times \{h\}} \in k_{\rm{per}}^{\times} \cdot k[G]$ for every $h \in S_H$. Similarly (actually, even more simply), by dividing $f$ by some element of $\widehat{H}(k_{\rm{per}})$, we may also assume that $f|_{\{g\} \times H} \in k_{\rm{per}}^{\times} \cdot k[H]$. Under these assumptions, we claim that $f \in k_{\rm{per}}^{\times} \cdot k[G \times H]$. This will complete the proof because we then have $f = \beta f'$ for some $\beta \in k_{{\rm{per}}}^{\times}$ and $f' \in k[G \times H]$, and we then have ${\rm{div}}(f) = {\rm{div}}(f')$, so that $\mathscr{L} \simeq \mathcal{O}({\rm{div}}(f)) \simeq 0 \in \Pic(G \times H)$.

Since $\{g\} \times H \not \subset {\rm{supp}}(D)$, we have $f|_{\{g\} \times H} \neq 0$.
Then shrinking $S_H$ if necessary, we have $f(g, h) \neq 0$ for each $h \in S_H$. By hypothesis, there is $\lambda \in k_{\rm{per}}^{\times}$ such that $\lambda^{-1} f|_{\{g\} \times H} \in k[H]$. Replacing $f$ with $f/\lambda$, we then have $f(g, h) \in k^{\times}$ for each $h \in S_H$. Then for each $h \in S_H$, we have $f|_{G \times \{h\}} \in k_{\rm{per}}^{\times} \cdot k[G]$, but since $f(g, h) \in k^{\times}$, we see that in fact $f|_{G \times \{h\}} \in k[G]$.

We will now show that $f \in k[G \times H]$. To prove this, let $V := k[G \times H]$. We have $f \in k_{\rm{per}} \otimes_k V = k_{\rm{per}}[G \times H]$, and we want to show that $f$ actually comes from an element of $V$. By faithfully flat descent, this is equivalent to showing that the images of $f$ under the two projections $V_{k_{\rm{per}}} \rightrightarrows V_{(k_{\rm{per}} \otimes_k k_{\rm{per}})}$ are equal. Consider the difference between these two images; call it $\alpha$. We want to show that $\alpha = 0$. Since $f|_{G \times \{h\}} \in k[G]$ for every $h \in S_H$, we know that $\alpha|_{G \times \{h\}} \in V_{(k_{\rm{per}} \otimes_k k_{\rm{per}})}|_{G \times \{h\}} = (k_{\rm{per}} \otimes_k k_{\rm{per}})[G]$ is zero. What we must show, therefore, is that any such $\alpha \in V_{(k_{\rm{per}}\otimes_k k_{\rm{per}})}$ is $0$. Choose a $k$-basis $\{e_i\}$ of $k_{\rm{per}} \otimes_k k_{\rm{per}}$. Then we may uniquely write $\alpha = \sum e_i \alpha_i$ for some $\alpha_i \in V$, and we have that $\alpha_i|_{G \times \{h\}} = 0$ for every $i$ and every $h \in S_H$. What we must show, therefore, is that if $A \in V$ is such that each $A|_{G \times \{h\}}$ (with $h \in S_H$) is $0$, then $A = 0$.

For this, choose a $k$-basis $\{w_i\}$ of $k[G]$. We may write $A = \sum_i z_iw_i$ for some $z_i \in k[H]$. We know that $z_i(h) = 0$ for every $h \in S_H$. Since $S_H$ is Zariski dense in $H$, this implies that $z_i = 0$. Since this holds for each $w_i$, we see that $A = 0$. This completes the proof of the lemma.
\end{proof}

Taking $G = H$ in the following theorem yields Theorem \ref{picismultiplicativeforgroupswithfinitepic}.

\begin{theorem}
\label{Pic(GXH)}
Let $G$ and $H$ be connected linear algebraic groups over a field $k$, and suppose that $\Pic(G)$ is finite.
\begin{itemize}
\item[(i)] If $H(k)$ is Zariski dense in $H$, then $\Pic(G \times H) = \Pic(G) \oplus \Pic(H)$.
\item[(ii)] If $k$ is separably closed, then $\Pic(G \times H) = \Pic(G) \oplus \Pic(H)$.
\end{itemize}
\end{theorem}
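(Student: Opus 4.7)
The plan is to prove surjectivity of the natural map $\pi_G^* + \pi_H^* \colon \Pic(G) \oplus \Pic(H) \rightarrow \Pic(G \times H)$ by a two-step reduction that lets me invoke Lemma \ref{pictrivialfibers}; injectivity has already been noted in the Conventions (restrict to $\{e_G\} \times H$ and $G \times \{e_H\}$). Given $\mathscr{L} \in \Pic(G \times H)$, I would first perform the standard reduction, setting $\mathscr{L}_G := \mathscr{L}|_{G \times \{e_H\}}$ and $\mathscr{L}_H := \mathscr{L}|_{\{e_G\} \times H}$ and replacing $\mathscr{L}$ by $\mathscr{L} \otimes \pi_G^*(\mathscr{L}_G)^{-1} \otimes \pi_H^*(\mathscr{L}_H)^{-1}$. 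After this replacement, $\mathscr{L}|_{G \times \{e_H\}}$ and $\mathscr{L}|_{\{e_G\} \times H}$ are both trivial, and it suffices to prove that this reduced $\mathscr{L}$ is itself trivial (for then the original line bundle lies in the image of $\pi_G^* + \pi_H^*$). Hypothesis (a) of Lemma \ref{pictrivialfibers} now holds with $g = e_G$, so the goal is to exhibit a Zariski dense $S_H \subset H(k)$ with $\mathscr{L}|_{G \times \{h\}} = 0$ for all $h \in S_H$.

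For (i), I would exploit the hypothesis that $\Pic(G)$ is finite. The function $\phi \colon H(k) \to \Pic(G)$, $h \mapsto \mathscr{L}|_{G \times \{h\}}$, then has only finitely many fibers $H_\alpha := \phi^{-1}(\alpha)$. Since $H$ is smooth and connected (hence irreducible) and $H(k)$ is Zariski dense by assumption, the decomposition $H = \overline{H(k)} = \bigcup_\alpha \overline{H_\alpha}$ forces $\overline{H_{\alpha_0}} = H$ for some $\alpha_0 \in \Pic(G)$. Now I would twist once more by setting $\mathscr{L}' := \mathscr{L} \otimes \pi_G^*(\alpha_0)^{-1}$. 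One still has $\mathscr{L}'|_{\{e_G\} \times H} = 0$ (because $\alpha_0^{-1}$ becomes trivial when restricted to the single point $e_G$), and now $\mathscr{L}'|_{G \times \{h\}} = 0$ for every $h \in H_{\alpha_0}$.

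Applying Lemma \ref{pictrivialfibers} to $\mathscr{L}'$ with $g = e_G$ and $S_H = H_{\alpha_0}$ yields $\mathscr{L}' = 0$, i.e.\ $\mathscr{L} = \pi_G^*(\alpha_0)$. Restricting this equality to $G \times \{e_H\}$ and combining with the first reduction gives $\alpha_0 = \mathscr{L}|_{G \times \{e_H\}} = 0$, whence $\mathscr{L} = 0$, as desired. Part (ii) then reduces immediately to (i), since smoothness of $H$ over $k = k_s$ forces $H(k) = H(k_s)$ to be Zariski dense in $H_{k_s} = H$.

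I do not anticipate any substantive obstacle: the argument is formal once Lemma \ref{pictrivialfibers} and the twisting trick are in hand. The only point requiring mild care is the irreducibility of $H$, needed to promote ``some $\overline{H_{\alpha_0}}$ is somewhere dense'' to ``some $H_{\alpha_0}$ is Zariski dense,'' which follows from smoothness plus connectedness (smooth schemes over a field are locally integral, so connected $+$ smooth implies irreducible).
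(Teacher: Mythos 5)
Your proposal is correct and follows essentially the same route as the paper: both arguments pigeonhole the map $h \mapsto \mathscr{L}|_{G \times \{h\}}$ into the finite group $\Pic(G)$, use irreducibility of the smooth connected $H$ to extract a single Zariski dense fiber $S_H$, twist by pullbacks of the two restricted bundles, and then invoke Lemma \ref{pictrivialfibers} with $g = e_G$. The only difference is cosmetic — you normalize along the two axes first and then twist a second time by $\pi_G^*(\alpha_0)^{-1}$, whereas the paper performs the single twist $\mathscr{L} \otimes \pi_G^*\mathscr{L}_1^{-1} \otimes \pi_H^*\mathscr{L}_2^{-1}$ directly.
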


\begin{proof}
Part (ii) follows from (i) and the fact that $H(k)$ is Zariski dense in the smooth group $H$ if $k$ is separably closed, so we concentrate on (i). Let $\mathscr{L} \in {\rm{Pic}}(G \times H)$. Since ${\rm{Pic}}(G)$ is finite and $H(k)$ is Zariski dense in $H$, there is a line bundle $\mathscr{L}_1 \in {\rm{Pic}}(G)$ and a Zariski dense set $S_H \subset H(k)$ such that $\mathscr{L}|_{G \times \{h\}} \simeq \mathscr{L}_1$ for all $h \in S_H$. (Here we have made use of the connectedness of $H$ to conclude that if a finite union of sets of rational points is Zariski dense, then one of the sets is Zariski dense.) Let $\mathscr{L}_2 := \mathscr{L}|_{\{1\} \times H} \in {\rm{Pic}}(H)$. By Lemma \ref{pictrivialfibers} (with $g = 1$), $\mathscr{L} \otimes \pi_G^*\mathscr{L}_1^{-1} \otimes \pi_H^*\mathscr{L}_2^{-1}$ is trivial. Since $\mathscr{L} \in {\rm{Pic}}(G \times H)$ was arbitrary, this proves that the map $\Pic(G) \oplus \Pic(H) \rightarrow \Pic(G \times H)$, which is always injective (as is seen by restricting to $1 \times H$ and $G \times 1$), is surjective as well under the hypotheses of the Theorem.
\end{proof}

\section{Unirational groups}
\label{sectionunirational}

In this section we will give Gabber's proof of Theorem \ref{unirationalpic}. (All results and proofs in this section are due to Gabber.) The first step is to show that $\Pic(G) = \Ext^1(G, \Gm)$:

\begin{proposition}
\label{pic=extunirational}
If $G$ is a smooth group scheme over a field $k$ and $G_{k_s}$ is unirational, then $\Pic(G) = \Ext^1(G, \Gm)$.
\end{proposition}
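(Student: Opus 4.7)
The plan is to show that for every $\mathscr{L} \in \Pic(G)$, the line bundle
\[
\mathscr{M} := m^*\mathscr{L} \otimes \pi_1^*\mathscr{L}^{-1} \otimes \pi_2^*\mathscr{L}^{-1}
\]
on $G \times G$ is trivial. By construction, $\mathscr{M}$ carries canonical trivializations of its restrictions to the slices $\{1\} \times G$ and $G \times \{1\}$, compatible at $(1,1)$.

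First I would reduce to the case $k = k_s$. By Rosenlicht's unit theorem applied to $G_{k_s} \times G_{k_s}$, any two global trivializations of $\mathscr{M}_{k_s}$ differ by a unit of the form $\alpha \cdot \chi_1(g) \chi_2(h)$ with $\alpha \in k_s^{\times}$ and $\chi_i \in \widehat{G_{k_s}}(k_s)$. Requiring a trivialization to restrict to the canonical trivializations on both $\{1\} \times G_{k_s}$ and $G_{k_s} \times \{1\}$ forces $\alpha = 1$ and $\chi_1 = \chi_2 = 1$, so a \emph{normalized} trivialization of $\mathscr{M}_{k_s}$, if it exists, is unique. Uniqueness forces Galois-equivariance, and hence such a trivialization descends to one of $\mathscr{M}$ over $k$. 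It therefore suffices to prove $\mathscr{M}_{k_s} = 0$ in $\Pic(G_{k_s} \times G_{k_s})$, so I may assume $k = k_s$.

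Now using unirationality, I would choose a dominant morphism $f \colon V \to G$ from an open subscheme $V \subset \mathbf{A}^N_k$. Since $V(k)$ is nonempty (as $k = k_s$ is infinite), after replacing $f$ by a left translate in $G$ I may arrange that $f(v_0) = 1$ for some $v_0 \in V(k)$. Next I would pull $\mathscr{M}$ back along $(f \times \mathrm{id}_G) \colon V \times G \to G \times G$. Because $\Pic(V) = 0$ (any Weil divisor on an open subset of $\mathbf{A}^N$ is principal, since $k[\mathbf{A}^N]$ is a UFD), the pullback $f^*\mathscr{L}$ is trivial on $V$, and hence
\[
(f \times \mathrm{id})^* \mathscr{M} = T^* \mathscr{L} \otimes p_G^* \mathscr{L}^{-1} \in \Pic(V \times G),
\]
where $T(v, h) := f(v)\,h$ and $p_G \colon V \times G \to G$ is the second projection. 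Furthermore, $\Pic(V \times G) = p_G^* \Pic(G)$: one has $\Pic(\mathbf{A}^N \times G) = p_G^*\Pic(G)$ by $\mathbf{A}^1$-homotopy invariance of Picard groups of regular schemes, and every codimension-one divisor of $\mathbf{A}^N \times G$ supported on $(\mathbf{A}^N \setminus V) \times G$ is of the form $Z \times G$ with $Z \subset \mathbf{A}^N$ principal, so the excision map $\Pic(\mathbf{A}^N \times G) \twoheadrightarrow \Pic(V \times G)$ has trivial kernel. Writing $(f \times \mathrm{id})^* \mathscr{M} = p_G^* \tilde N$ and restricting to a slice $\{v\} \times G$, I find $\tilde N = t_{f(v)}^* \mathscr{L} \otimes \mathscr{L}^{-1}$ for every $v \in V(k)$; in particular this class is independent of $v$. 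Specializing $v = v_0$ gives $\tilde N = 0$, so $t_g^* \mathscr{L} \simeq \mathscr{L}$ for every $g$ in the Zariski-dense subset $f(V(k)) \subset G(k)$.

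Finally I would invoke Lemma \ref{pictrivialfibers} applied to $\mathscr{M}$ on $G \times G$, with $g = 1$ and $S_H := f(V(k))$: the hypothesis $\mathscr{M}|_{\{1\} \times G} = 0$ is built in by construction, and $\mathscr{M}|_{G \times \{h\}} = t_h^* \mathscr{L} \otimes \mathscr{L}^{-1} = 0$ for every $h \in S_H$ by the preceding paragraph. The lemma then yields $\mathscr{M} = 0$ in $\Pic(G \times G)$, i.e., $\mathscr{L} \in \Ext^1(G, \Gm)$. The principal obstacle is the first step: the rigidification/Galois-descent argument relies crucially on the specific shape of $\mathscr{M}$ supplying enough canonical data to make normalized trivializations unique.
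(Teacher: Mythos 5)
Your argument is correct, but it takes a genuinely different route from the paper's. The paper (following Gabber) never touches the class $m^*\mathscr{L} \otimes \pi_1^*\mathscr{L}^{-1} \otimes \pi_2^*\mathscr{L}^{-1}$ directly: it builds a normal projective compactification $G^c$ to which the $G$-action extends, prolongs $\mathscr{L}$ to a rank-one reflexive sheaf $\mathscr{L}^c$, and defines the sheaf $E$ of pairs $(g,\phi)$ with $\phi\colon t_g^*\mathscr{L}^c \xrightarrow{\sim} \mathscr{L}^c$; properness and normality of $G^c$ give $\ker(E \to G)=\Gm$, and unirationality gives fppf surjectivity of $E \to G$ via ${\rm{Cl}}(U)=0$ for $U$ open in affine space. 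That approach actually manufactures the extension of $G$ by $\Gm$ realizing $\mathscr{L}$, which is more than the stated equality of subgroups of $\Pic(G)$. Your approach instead verifies the defining condition of $\Ext^1(G,\Gm)$ head-on: a rigidification/Galois-descent step (using Rosenlicht's unit theorem to see that normalized trivializations are unique) reduces to $k=k_s$, unirationality plus ${\rm{Cl}}(V)=0$ and $\Pic(V\times G)=p_G^*\Pic(G)$ produces a Zariski-dense set of translation-invariances, and Lemma \ref{pictrivialfibers} finishes. This is shorter, avoids reflexive sheaves and fppf descent of group schemes entirely, and there is no circularity since Lemma \ref{pictrivialfibers} depends only on \S\ref{sectionfinitepic}; the price is that you do not recover the extension $E$ itself, and you lean on the see-saw machinery the paper reserves for Theorem \ref{Pic(GXH)}. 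Two small repairs are needed in the write-up. First, fix an isomorphism $e^*\mathscr{L}\simeq k$ so that the ``canonical'' trivializations on the two slices genuinely exist and are defined over $k$ (the paper does the same). Second, there is a left/right mismatch: pulling back along $f\times \mathrm{id}$ and restricting to $\{v\}\times G$ proves invariance under \emph{left} translation by $f(v)$, i.e.\ $\mathscr{M}|_{\{f(v)\}\times G}=0$, whereas $\mathscr{M}|_{G\times\{h\}}$ involves \emph{right} translation by $h$. Either pull back along $\mathrm{id}\times f$ instead, or apply Lemma \ref{pictrivialfibers} with the two factors interchanged (taking the single trivial slice to be $G\times\{1\}$ and the dense family to be the slices $\{g\}\times G$, $g\in f(V(k))$), which its proof permits by symmetry. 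With those adjustments the argument is complete.
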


\begin{proof}
We first construct an open immersion of $k$-schemes $G \hookrightarrow G^c$ with $G^c$ projective and normal such that the multiplication map $G \times G \rightarrow G$ extends to a map $G \times G^c \rightarrow G^c$. We accomplish this as follows. If $G = {\rm{GL}}_n$, then we take the inclusion $G \hookrightarrow \A^{n^2} \hookrightarrow \mathbf{P}^{n^2}$, and one easily checks that the multiplication map extends in this case. In general, we embed $G$ into some ${\rm{GL}}_n$ and take its Zariski closure $G'$ in $\mathbf{P}^{n^2}$. The ``multiplication'' map ${\rm{GL}}_n \times \mathbf{P}^{n^2} \rightarrow \mathbf{P}^{n^2}$ then restricts to a map $G \times G' \rightarrow G'$ which extends the multiplication map on $G$. Now we take the normalization $G^c$ of $G'$, and the above map then lifts to a map $G \times G^c \rightarrow G^c$ by the universal property of normalization.

Now let $\mathscr{L} \in \Pic(G)$. Extend $\mathscr{L}$ to a rank-one reflexive sheaf $\mathscr{L}^c$ on the normal scheme $G^c$. (This is possible because of the isomorphism between rank-one reflexive sheaves and Weil divisor class groups on normal Noetherian schemes; see \cite[\S 3]{schwede}.) Define the group-valued fppf sheaf $E$ on smooth $k$-schemes by the formula
\[
E(S) := \{ g \in G(S), \phi: t_g^*\mathscr{L}^c_S \xrightarrow{\sim} \mathscr{L}^c_S\},
\]
where $t_g: G^c_S \rightarrow G^c_S$ is left-translation by $g$ and the group law on $E$ is the evident one: given two pairs $(g, \phi), (g', \phi')$ as above, their product is the pair $(gg', \psi)$, where $\psi$ is the composition
\[
t_{gg'}^*\mathscr{L}^c_S = t_{g'}^*t_g^*\mathscr{L}^c_S \xrightarrow[\sim]{t_{g'}^*\phi} t_{g'}^*\mathscr{L}^c_S \xrightarrow[\sim]{\phi'} \mathscr{L}^c_S.
\]

Let $\pi: E \rightarrow G$ denote the obvious map. We claim that $\ker(\pi) = \Gm$. Indeed, $\ker(\pi) = {\rm{Isom}}(\mathscr{L}^c, \mathscr{L}^c)$. But we have $\calHom(\mathscr{L}^c, \mathscr{L}^c) = \calO_{G^c}$ as coherent sheaves on $G^c$, as holds for any rank-one reflexive sheaf on a normal scheme by using the fact that any such sheaf is a subsheaf of the sheaf of rational functions. Then we deduce that the same equality holds after base change since Hom sheaves commute with flat base change. Thus, in order to prove that $\ker(\pi) = \Gm$, we only need to show that $\Gamma(G^c_S, \Gm) = \Gamma(S, \Gm)$ for any $k$-scheme $S$, for which it suffices to show that $\Gamma(G^c_S, \calO_{G^c_S}) = \Gamma(S, \calO_S)$. Since coherent cohomology commutes with flat base change, we only need to check this for $S = {\rm{Spec}}(k)$. In that case, $\Gamma(G^c, \calO_{G^c})$ is a finite $k$-algebra because $G^c$ is proper, and because $G^c$ is integral, it must therefore be a finite field extension of $k$. But given any such global section, its restriction to $G$ must be an element of $k$ since $k$ is algebraically closed in the function field of $G$. Since $G^c$ is integral, therefore, any such global section must in fact lie in $k$. This completes the proof that $\ker(\pi) = \Gm$.

Now we claim that the map $\pi: E \rightarrow G$ of fppf sheaves is surjective. This is the key point in the proof where we use the fact that $G_{k_s}$ is unirational. Indeed, by assumption there is a dominant morphism of $k_s$-schemes $g: U \rightarrow G_{k_s}$ such that $U$ is an open subscheme of some affine space. By generic flatness, we may assume after shrinking $U$ that $g$ is flat. A finite list of translates of $g$ then cover $G$, so that we have a flat surjective map $g: U^n \rightarrow G$ for some $n > 0$. Therefore, renaming $U^n$ as $U$, we obtain that $g$ is surjective, so $g$ is an fppf cover (i.e., a surjective fppf map). Standard direct limit arguments imply that $U$ descends to a $k'$-scheme for some finite separable extension $k'/k$, and that $g$ also so descends. In order to check the surjectivity of $\pi$, it suffices to check it after extending scalars to $k'$. By translating, we may also assume that $u_0 \mapsto 1 \in G(k')$ for some $u_0 \in U(k')$.

Note that $G^c_{k'}$ is a normal scheme, since $k'/k$ is \'etale. Using the isomorphism between the group of rank-one reflexive sheaves and the Weil divisor class group on a Noetherian normal scheme, together with the equality ${\rm{Cl}}(U \times G^c_{k'}) = {\rm{Cl}}(U) \oplus {\rm{Cl}}(G^c_{k'})$ which uses the normality of $G^c_{k'}$ \cite[Cor. 21.4.11]{ega}, we deduce from the vanishing of ${\rm{Cl}}(U)$ that $t_g^*\mathscr{L}^c \simeq \mathscr{M}$ for some rank-one reflexive sheaf $\mathscr{M}$ on $G^c$. Restricting to $u_0$ shows that $\mathscr{M} \simeq \mathscr{L}^c$. We therefore deduce (from the definition of $E$) that $g$ factors through a map $U \rightarrow E$. Thus, upon pullback to the fppf cover $U \rightarrow G$, $\pi$ obtains a section, hence it is fppf surjective as claimed. 

Since $E \rightarrow G$ is surjective with kernel $\Gm$, the effectivity of fppf descent for relatively affine schemes implies that $E$ is representable by a finite type $k$-group scheme. Fix an isomorphism $e^*\mathscr{L} \xrightarrow{\sim} k$, where $e \in G(k)$ is the identity. Any $S$-valued point of $E$ yields an isomorphism $t_g^*\mathscr{L}^c_S \xrightarrow{\sim} \mathscr{L}^c_S$. Restricting to the identity of $G$ then yields an isomorphism $g^*\mathscr{L} \xrightarrow{\sim} \calO_S$. We therefore have a map $E \rightarrow {\rm{Isom}}(\mathscr{L}, \calO)$, where the latter sheaf ``is'' the line bundle $\mathscr{L}$ (i.e., it is the corresponding $\Gm$-torsor; the Isom here is in the category of fppf sheaves of $\mathcal{O}_G$-modules on $G$). Since this is a map of $\Gm$-torsors over $G$, it is an isomorphism. We deduce that $\mathscr{L}$ arises (as a $\Gm$-torsor) from an extension of $G$ by $\Gm$, which proves the proposition.
\end{proof}

Before turning to the proof of Theorem \ref{unirationalpic}, we turn to the following special case which is worthy of note in its own right.

\begin{proposition}
\label{perfectgpspic}
Let $G$ be a connected linear algebraic group over a field $k$, and suppose that $G = \mathscr{D}G$. Then $\Pic(G) = \Ext^1(G, \Gm)$, and this group is finite.
\end{proposition}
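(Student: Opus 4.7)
The plan is to deduce both conclusions from Theorem~\ref{unirationalpic} by proving that $G_{k_s}$ is unirational over $k_s$. Since formation of the derived group commutes with separable base change, $G_{k_s} = \mathscr{D}(G_{k_s})$, so it suffices to prove the following: any connected linear algebraic group $G$ over a separably closed field $K$ with $G = \mathscr{D}G$ is $K$-unirational.

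Over $K = k_s$ every $K$-torus splits, and by the Remark following Theorem~\ref{unirationalpic} every connected reductive $K$-group is $K$-unirational. Let $R$ denote the $K$-unipotent radical of $G$ (the maximal smooth connected normal unipotent $K$-subgroup); the quotient $G/R$ is pseudo-reductive, and inherits $G/R = \mathscr{D}(G/R)$ from $G = \mathscr{D}G$, so $G/R$ is pseudo-semisimple. Pseudo-semisimple $K$-groups are $K$-unirational (as follows from unirationality of their reductive quotients via the Conrad--Gabber--Prasad standard presentation). The unipotent piece $R$ must also be $K$-unirational, and for this the hypothesis $G = \mathscr{D}G$ is essential: since $G$ is generated by commutators, $R$ is swept out by commutator orbits of the pseudo-reductive quotient acting on $R$, producing an explicit $K$-rational parametrization. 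Assembling these through the fibration $G \to G/R$ yields $K$-unirationality of $G$, and Theorem~\ref{unirationalpic} then produces both $\Pic(G) = \Ext^1(G, \Gm)$ and the finiteness of this group.

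The main obstacle is justifying $K$-unirationality of the unipotent radical $R$ over imperfect separably closed $K$, where wound unipotent groups can fail to be unirational. Without $G = \mathscr{D}G$ no such conclusion is available; with the hypothesis, one must trace through its structural consequences (such as $R$ being generated by conjugacy classes coming from the semisimple-like quotient). An alternative route I would consider is to adapt the proof of Proposition~\ref{pic=extunirational} directly, replacing the flat cover of $G$ by an open subset of affine space with one coming from iterated commutator maps $G^{2n} \to G$ (surjective for large $n$ since $G = \mathscr{D}G$), and substituting $G^{\rm ab} = 0$ for the vanishing $\mathrm{Cl}(U) = 0$ used in the original argument to descend the required trivialization of $t_g^*\mathscr{L}^c$.
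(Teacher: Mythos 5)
Your proposal has two genuine problems. The more serious one is circularity: in this paper the finiteness assertion of Theorem \ref{unirationalpic} is \emph{proved by reducing to Proposition \ref{perfectgpspic}} (the proof filters $G$ between $\mathscr{D}G$ and $G^{\mathrm{ab}}$ and handles the perfect case by citing exactly this proposition), so you cannot invoke Theorem \ref{unirationalpic} to obtain the finiteness here. What you may legitimately use is Proposition \ref{pic=extunirational}, which gives $\Pic(G)=\Ext^1(G,\Gm)$ once unirationality of $G_{k_s}$ is known; but the finiteness needs an independent argument. The paper's is short: since $G=\mathscr{D}G$ we have $\calHom(G,\Gm)=0$, so the trivial extension of $G$ by $\Gm$ has no nontrivial automorphisms as an extension and hence no nontrivial $k$-forms; therefore $\Ext^1(G,\Gm)\rightarrow\Ext^1_{\overline{k}}(G,\Gm)$ is injective, and the target is finite because $\overline{k}$ is perfect. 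Nothing in your write-up supplies a substitute for this step.

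The second problem is your route to unirationality of $G_{k_s}$. Splitting off the $k$-unipotent radical $R$ creates difficulties you do not resolve: unirationality of $G$ would not follow formally from unirationality of $R$ and $G/R$ (you would need, e.g., a rational section of $G\rightarrow G/R$), and the assertion that $R$ itself is unirational --- over an imperfect separably closed field $R$ can be wound --- is exactly the kind of claim that needs proof; ``swept out by commutator orbits'' is not an argument. The paper avoids all of this with one citation: a perfect smooth connected affine group is generated by its $k$-tori \cite[Prop.\,A.2.11]{cgp}, so there is a surjection $\prod_i T_i\twoheadrightarrow G$ from a finite product of tori, and tori over $k_s$ are split, hence rational; this single input replaces your entire decomposition. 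Your ``alternative route'' via commutator maps $G^{2n}\rightarrow G$ also does not work as stated: the point of the cover $U\rightarrow G$ by an open subset of affine space in Proposition \ref{pic=extunirational} is that $\mathrm{Cl}(U)=0$ forces $t_g^*\mathscr{L}^c$ to be pulled back from $G^c$, and $\mathrm{Cl}(G^{2n})$ has no reason to vanish, nor is $G^{\mathrm{ab}}=0$ a substitute for that vanishing.
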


\begin{proof}
The key point is that any perfect (i.e., equal to its own derived group) connected linear algebraic group $G$ over a field $k$ is unirational over $k_s$ (in fact, even over $k$ but we do not need this). Indeed, by \cite[Prop.\,A.2.11]{cgp}, $G$ is generated by its $k$-tori, hence there is a surjection of $k$-schemes $\prod_i T_i \twoheadrightarrow G$ from a finite product of tori to $G$. The unirationality claim then follows from the fact that tori over $k_s$ are unirational. (In fact, tori are unirational over any field but we do not require this.)

By Proposition \ref{pic=extunirational}, therefore, $\Pic(G) = \Ext^1(G, \Gm)$. To show that the latter group is finite, we claim that the map $\Ext^1(G, \Gm) \rightarrow \Ext^1_{\overline{k}}(G, \Gm)$ is injective. Assuming this, the finiteness of the latter group (since $\overline{k}$ is perfect) implies that of the former. To see the desired injectivity, we note that the kernel of this map consists of the $k$-forms of the trivial extension. Since the automorphism functor of the trivial extension of $G$ by $\Gm$ (as an extension) is $\calHom(G, \Gm) = 0$ (since $G = \mathscr{D}G$), it follows that this kernel is trivial (since it is classified by ${\rm{H}}^1(k, {\rm{Aut}}_{\rm{triv}})$, where ${\rm{Aut}}_{\rm{triv}}$ denotes the automorphism functor of the trivial extension).
\end{proof}

\begin{lemma}
\label{extexactsequence}
Given a short exact sequence
\begin{equation}
\label{extension1}
1 \longrightarrow G' \longrightarrow G \longrightarrow G'' \longrightarrow 1
\end{equation}
of finite type $k$-group schemes such that $G$ $($hence also $G''$$)$ is smooth and connected, the resulting sequence
\[
0 \longrightarrow \widehat{G''}(k) \longrightarrow \widehat{G}(k) \longrightarrow \widehat{G'}(k) \longrightarrow \Ext^1(G'', \Gm) \longrightarrow \Ext^1(G, \Gm)
\]
is exact, where all of the maps are given by pullback, except for $\widehat{G'}(k) \rightarrow \Ext^1(G'', \Gm)$, which sends a character $\chi$ to the pushout of the extension $($$\ref{extension1}$$)$ along $\chi$. If $G'$ is also smooth and connected, then this exact sequence may be extended to include the map $\Ext^1(G, \Gm)  \rightarrow \Ext^1(G', \Gm)$.
\end{lemma}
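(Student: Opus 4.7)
The plan is to establish exactness at each position by working directly with the Yoneda-extension interpretation of $\Ext^1(?, \Gm)$ (Baer classes of central extensions by $\Gm$) noted in the introduction. All extensions are automatically central because they extend connected groups and $\mathrm{Aut}(\Gm)$ is étale. First I would verify that all arrows are well-defined. Pullback of characters and of extensions is routine. The subtle case is the pushout map $\widehat{G'}(k) \to \Ext^1(G'', \Gm)$, sending a character $\chi \colon G' \to \Gm$ to the pushout $E_\chi := (G \times \Gm)/G'$ with $G'$-action $h \cdot (g, t) = (gh^{-1}, \chi(h) t)$. A direct computation shows that the natural multiplication $[g_1, t_1] \cdot [g_2, t_2] = [g_1 g_2, t_1 t_2]$ descends to $E_\chi$ precisely when $\chi$ is invariant under $G$-conjugation, i.e., $\chi(g g' g^{-1}) = \chi(g')$ for all $g \in G$, $g' \in G'$. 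This invariance follows from the connectedness of $G$: the map $G \to \underline{\mathrm{Hom}}(G', \Gm) = \underline{\mathrm{Hom}}(G'^{\mathrm{ab}}, \Gm)$ sending $g \mapsto \chi \circ c_g$ is a morphism of $k$-schemes whose target is an étale $k$-group scheme (the character scheme of any commutative finite-type $k$-group scheme is étale), and must thus be constant at its value $\chi$ at the identity.

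Next I would check exactness at the intermediate positions. Injectivity of $\widehat{G''}(k) \to \widehat{G}(k)$ is immediate from faithful flatness of $G \to G''$. Exactness at $\widehat{G}(k)$ is the universal property of the fppf quotient $G/G' = G''$. Exactness at $\widehat{G'}(k)$ amounts to: $E_\chi$ is trivial iff $\chi$ extends to a character $\widetilde{\chi} \colon G \to \Gm$. An extension $\widetilde{\chi}$ gives the splitting $g \mapsto [g, \widetilde{\chi}(g)^{-1}]$, which factors through $G''$ as a group homomorphism; conversely, a splitting $s \colon G'' \to E_\chi$ lifts along $G \to G''$ to $\sigma \colon G \to E_\chi$ differing from the canonical $g \mapsto [g, 1]$ by a morphism $\widetilde{\chi}^{-1} \colon G \to \Gm$, which is a character extending $\chi$. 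Exactness at $\Ext^1(G'', \Gm)$: an extension $E \to G''$ pulls back trivially to $G$ iff $G \to G''$ lifts to a homomorphism $\sigma \colon G \to E$, in which case $\chi := \sigma|_{G'} \colon G' \to \Gm$ is a character with $E \cong E_\chi$ via $[g, t] \mapsto \sigma(g) t$; conversely, for $E_\chi$ the lift is the canonical $g \mapsto [g, 1]$.

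The main obstacle is exactness at $\Ext^1(G, \Gm)$ under the additional hypothesis that $G'$ is smooth and connected. The composition vanishes since $G' \to G \to G''$ is trivial. For the reverse, let $E \to G$ be an extension with $E|_{G'}$ trivial, and fix a splitting $s \colon G' \hookrightarrow E$. For $e \in E$ with image $\bar{e} \in G$, the conjugation identity $e \cdot s(g') \cdot e^{-1} = s(\bar{e} g' \bar{e}^{-1}) \cdot t(e, g')$ defines a character $t(e, -) \colon G' \to \Gm$, and hence a morphism $\tau \colon E \to \widehat{G'}$ of $k$-schemes from the smooth connected $E$ (which is smooth connected as an extension of $G$ by $\Gm$) to $\widehat{G'}$. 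The target $\widehat{G'}$ is étale: since $G'$ is smooth connected, $\widehat{G'} = \widehat{G'^{\mathrm{ab}}}$ is the character group of a smooth connected commutative $k$-group scheme, a finitely-generated free abelian group viewed as an étale $k$-scheme. Since $\tau(1_E)$ is the trivial character, $\tau$ is constant, so $s(G')$ is normal in $E$. The quotient $E/s(G')$ is then a smooth connected $k$-group fitting into $1 \to \Gm \to E/s(G') \to G'' \to 1$, and its pullback along $G \to G''$ recovers $E$ via the isomorphism $e \mapsto (e \cdot s(G'), \bar{e})$.
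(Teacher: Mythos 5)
Your overall strategy coincides with the paper's: interpret $\Ext^1$ via Yoneda classes of (automatically central) extensions by $\Gm$, check the middle exactness statements by the universal properties of pushout and pullback, and for exactness at $\Ext^1(G,\Gm)$ prove that the section $s(G')$ is normal in $E$ by showing the conjugation-character morphism $E \to \calHom(G',\Gm)$ is constant because its source is connected and its target is \'etale. However, there is a genuine gap at the crux of that last step (and the same error appears in your justification of well-definedness of the pushout): the claim that ``the character scheme of any commutative finite-type $k$-group scheme is \'etale,'' i.e.\ that $\widehat{G'}=\calHom(G',\Gm)$ is \'etale, is false in positive characteristic. For $G'=\Ga$ over a field of characteristic $p$, the fppf sheaf $\calHom(\Ga,\Gm)$ is a large non-\'etale sheaf (it admits nontrivial points over non-reduced rings, e.g.\ $x\mapsto 1+ax$ with $a^2=0$; likewise $\calHom(\alpha_p,\Gm)\simeq\alpha_p$). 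Indeed, the nontriviality of $\widehat{U}$ for forms $U$ of $\Ga$ is the engine of the whole paper, so one cannot identify $\calHom(G',\Gm)$ with the constant sheaf on the finitely generated group $\Hom_{\overline{k}}(G',\Gm)$. Your sentence ``a finitely-generated free abelian group viewed as an \'etale $k$-scheme'' conflates the group of $k$-points of the sheaf with the sheaf itself.

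The correct repair is exactly what the paper does: work over $\overline{k}$, observe that for each individual point $e\in E(\overline{k})$ the character $\phi_e\colon G'\to\Gm$ factors through the maximal multiplicative quotient $M'$ of $G'$ (over a field there are no nontrivial homomorphisms from a unipotent group to $\Gm$), and then use Zariski density of $E(\overline{k})$ in the reduced scheme $E$ to conclude that the whole morphism $\phi\colon E\to\calHom(G',\Gm)$ factors through the subsheaf $\calHom(M',\Gm)$. The latter \emph{is} \'etale (it is the character sheaf of a multiplicative-type group), and only then does the connectedness of $E$ force $\phi$ to be trivial. Everything else in your write-up --- the explicit pushout construction, exactness at $\widehat{G''}(k)$, $\widehat{G}(k)$, $\widehat{G'}(k)$, and $\Ext^1(G'',\Gm)$, and the identification of $E$ with the pullback of $E/s(G')$ --- matches the paper's argument and is fine.
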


\begin{proof}
Exactness is clear at $\widehat{G''}(k)$ and $\widehat{G}(k)$. To check exactness at $\widehat{G'}(k)$, suppose given $\chi \in \widehat{G'}(k)$. Then to say that the pushout of (\ref{extension1}) along $\chi$ has a section to $\Gm$ is the same, by the universal property of pushout, as saying that $\chi$ extends to a character of $G$.

For exactness at $\Ext^1(G'', \Gm)$, given an extension $E''$ of $G''$ by $\Gm$, let $E$ denote its pullback to $G$ and suppose that $E$ admits a section $s: G \rightarrow E$. Then the composition $\phi: G \xrightarrow{s} E \rightarrow E''$ is a map such that its composition with $f: E'' \rightarrow G''$ is the map $G \rightarrow G''$. In particular, $f \circ \phi$ kills $G'$, hence $\phi$ restricts to character $\chi$ of $G'$, and one may check that $E''$ is the pushout of (\ref{extension1}) along $\chi$.

Finally, let us check exactness at $\Ext^1(G, \Gm)$. Let $E$ be an extension of $G$ by $\Gm$ such that the preimage of $G'$ admits a section $s: G' \rightarrow E$. We claim that $s(G') \subset E$ is normal. Let $f$ denote the map $E \rightarrow G$. The conjugation action of $E$ on $s(G')$ yields a map $\phi: E \rightarrow \calHom(G', \Gm)$ defined by the formula
\[
es(g')e^{-1} = s(f(e)g'f(e)^{-1})\phi_e(g')
\]
for $e \in E$, $g' \in G'$, where $\phi_e$ denotes $\phi(e) \in \calHom(G', \Gm)$. We need to show that $\phi$ is trivial. It suffices to check this over $\overline{k}$, so we may assume that $k$ is algebraically closed for the purposes of checking this claim. The map $\phi$ descends to a map $E \rightarrow \calHom(M', \Gm)$, where $M'$ is the maximal multiplicative quotient of $G'$. Indeed, for any $e \in E(k)$ the map $\phi_e$ so descends (since over a field any map $G' \rightarrow \Gm$ descends to the maximal multiplicative quotient), hence so does $\phi$ since $E(k)$ is Zariski dense in $E$ (as $E$ is reduced). Finally, $\calHom(M', \Gm)$ is \'etale, so the map from the connected $E$ to it must be trivial.

We therefore see that $s(G') \subset E$ is normal, so we may form the quotient $E/s(G')$, which is an extension of $G''$ by $\Gm$ (via the composition $E \rightarrow G \rightarrow G''$) that pulls back to $E$.
\end{proof}

\begin{proof}[Proof of Theorem $\ref{unirationalpic}$]
Thanks to Proposition \ref{pic=extunirational}, it only remains to prove the finiteness assertion. If $G$ is unirational over $k_s$, then so are $\mathscr{D}G$ and $G^{\rm{ab}} := G/\mathscr{D}G$. Indeed, for $G^{\rm{ab}}$ this is clear, while for $\mathscr{D}G$ it follows from the fact that this group is the image of a map $G^n \rightarrow G$ for some $n$. (This map consists of a product of commutator maps \cite[Chap.\,I, Prop.\,2.2]{borel}.) By Lemma \ref{extexactsequence}, we may therefore repeatedly filter $G$ by $\mathscr{D}G$ and $G^{\rm{ab}}$ and thereby assume that $G$ is either commutative or else $G = \mathscr{D}G$. The latter case is handled by Proposition \ref{perfectgpspic}.

To handle the case when $G$ is commutative, we note that unirationality of $G$ implies that $G$ is generated by a finite collection of maps $U \rightarrow G$ from open subsets of the affine line. For any effective divisor $D$ on $\mathbf{P}^1$, define the generalized Jacobian $J_D$ functorially by the formula $J_D(S) := \{ \mathscr{L} \in \Pic^0(\mathbf{P}^1_S), \phi: \mathscr{L}_D \xrightarrow{\sim} \calO_D\}$. By \cite[Ch.\,I. \S 1, Thm.\,1]{serreclassfields}, for any map $f: U \rightarrow G$, there is an effective divisor $D$ on $\mathbf{P}^1$ disjoint from $U$ such that $f$ factors as a composition $U \rightarrow J_D \rightarrow G$, where the first map is the one coming from $u \mapsto [u] - [u_0]$ for some $u_0 \in U(k)$, and the second is a $k$-group homomorphism. (The proof in \cite{serreclassfields} does not assume perfection of $k$ in the case when $G$ is affine. It is used, however, in the case when $G$ is neither affine nor proper in order to use Chevalley's Theorem on the structure of algebraic groups (though this assumption could probably be eliminated, either by using the fact that Chevalley applies after a finite purely inseparable extension or by applying the structure theorem \cite[Thm.\,A.3.9]{cgp} when ${\rm{char}}(k) > 0$).) 

Since $\Jac(\mathbf{P}^1) = 0$, we have $J_D = {\rm{Aut}}(\mathcal{O}_D)$, the automorphism functor of $\mathcal{O}_D$. Therefore,  $J_D(S) = \Gamma(D_S, \mathcal{O}_{D_S}^{\times})$ for $k$-schemes $S$. That is, $J_D \simeq \R_{D/k}(\Gm)$, where $\R_{D/k}$ denotes Weil restriction of scalars. Since $G$ is generated (as a group) by the images of a finite collection of maps $U \rightarrow G$ from open subsets of the affine line, it follows that $G$ fits into an exact sequence
\[
0 \longrightarrow H \longrightarrow \R_{A/k}(\Gm) \longrightarrow G \longrightarrow 1
\]
for some $k$-group scheme $H$ and some finite $k$-algebra $A$. Since $\R_{A/k}(\Gm)$ is an open subscheme of some affine space, it has trivial Picard group and therefore trivial $\Ext^1(\cdot, \Gm)$. Lemmas \ref{extexactsequence} and \ref{finitecokernelchars} therefore complete the proof.
\end{proof}

\section{Finiteness of $\Ext^1(G, \Gm)$ over global function fields}
\label{completingtheproof}

In this section we will prove Theorems \ref{extisfinite} and \ref{extisfg}. Before proving Theorem \ref{extisfinite}, however, let us prove a related finiteness result over general fields which says that, in general, the potential infinitude of $\Ext^1(G, \Gm)$ comes from the case when $G$ is a form of $\Ga$.

\begin{proposition}
\label{extfiniteformsofGa}
Let $k$ be a field such that $\Ext^1(U, \Gm)$ is finite for every $k$-form $U$ of $\Ga$. Then $\Ext^1(G, \Gm)$ is finite for every connected linear algebraic $k$-group $G$.
\end{proposition}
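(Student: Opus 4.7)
The plan is to reduce, via Lemma~\ref{extexactsequence}, to base cases already handled in the paper: perfect connected linear algebraic groups (Proposition~\ref{perfectgpspic}), tori (Theorem~\ref{unirationalpic}), and $k$-forms of $\Ga$ (by hypothesis). The workhorse is the four-term exact sequence
\[
\widehat{G'}(k) \longrightarrow \Ext^1(G'',\Gm) \longrightarrow \Ext^1(G,\Gm) \longrightarrow \Ext^1(G',\Gm)
\]
from Lemma~\ref{extexactsequence} applied to a short exact sequence $1 \to G' \to G \to G'' \to 1$ of smooth connected linear algebraic $k$-groups: finiteness of $\Ext^1$ for both $G'$ and $G''$ forces finiteness for $G$.

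First I would reduce to the case where $G$ is either commutative or perfect. Consider the derived series $G \supseteq \mathscr{D}G \supseteq \mathscr{D}^2 G \supseteq \cdots$; each term is smooth and connected, and dimensions strictly decrease until they stabilize at a smooth connected perfect $k$-subgroup $P$ (possibly trivial). Each quotient $\mathscr{D}^i G / \mathscr{D}^{i+1} G$ is smooth connected commutative, so iterating Lemma~\ref{extexactsequence} along this filtration reduces the problem to $P$ (handled by Proposition~\ref{perfectgpspic}) together with finitely many smooth connected commutative $k$-groups.

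For a smooth connected commutative linear algebraic $k$-group $G$, let $T \subseteq G$ denote the unique maximal torus (unique by commutativity, and defined over $k$ by Galois descent from $G_{k_s}$). The quotient $U := G/T$ is smooth and connected, and since $U_{\overline{k}}$ is commutative with trivial maximal torus, $U$ is unipotent. By Lemma~\ref{extexactsequence} applied to $1 \to T \to G \to U \to 1$, it suffices to treat $T$ and $U$ separately. Tori are unirational over $k_s$ (in fact rational), so Theorem~\ref{unirationalpic} gives finiteness of $\Ext^1(T,\Gm)$. For the smooth connected unipotent $U$, I would invoke the standard structure theory (e.g., \cite[App.\,B]{cgp}), which provides a composition series by smooth connected $k$-subgroups whose successive quotients are $k$-forms of $\Ga$; iterating Lemma~\ref{extexactsequence} and applying the hypothesis on $\Ga$-forms yields the finiteness of $\Ext^1(U,\Gm)$.

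The main technical point is that every short exact sequence arising in the reductions must involve smooth connected linear algebraic groups so that Lemma~\ref{extexactsequence} applies in its full form. This is automatic for the derived series, for the torus/unipotent decomposition of a commutative connected group, and for the composition series of a smooth connected unipotent group by $k$-forms of $\Ga$. With these structural ingredients in place, the finiteness follows from the hypothesis on forms of $\Ga$, Proposition~\ref{perfectgpspic}, and Theorem~\ref{unirationalpic}.
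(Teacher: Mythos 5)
Your proposal is correct and follows essentially the same route as the paper: filter $G$ through its derived series to reduce to the perfect case (Proposition \ref{perfectgpspic}) and the commutative case, then split a commutative $G$ into its maximal torus (Theorem \ref{unirationalpic}) and a unipotent quotient filtered by $k$-forms of $\Ga$, applying Lemma \ref{extexactsequence} at each stage. The only step you outsource is the existence of a composition series of a smooth connected unipotent group with successive quotients $k$-forms of $\Ga$, which the paper proves explicitly (reducing to the commutative $p$-torsion case and using \cite[Lem.\,B.1.10]{cgp} to pass through a finite \'etale subgroup); this is a true and standard fact, so the appeal to \cite[App.\,B]{cgp} is acceptable, though you should be aware it requires the small argument the paper supplies rather than being a single quotable statement.
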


\begin{proof}
By Lemma \ref{extexactsequence}, if the desired finiteness holds for the two end terms in a short exact sequence, then it also holds for the middle term. We will repeatedly use this fact below. By filtering $G$ between its derived group and its abelianization, we may assume that $G$ is either perfect or commutative. The perfect case is handled by Proposition \ref{perfectgpspic}, so we may assume that $G$ is commutative.

So let $G$ be a connected commutative linear algebraic $k$-group, and let $T \subset G$ be its maximal torus (which is unique because $G$ is commutative). Then $U := G/T$ is unipotent, so we may assume that $G$ is either a torus or commutative unipotent. In the former case, $G = T$ is unirational over $k_s$ (even over $k$), so we are done by Theorem \ref{unirationalpic}, which was proved in \S \ref{sectionunirational} (though for tori this finiteness is much easier to prove).

To handle the unipotent case, we claim that any smooth connected unipotent $k$-group $U$ admits a filtration by forms of $\Ga$. This will prove the proposition. In order to prove this claim, we may assume that $U$ is commutative (as we may for our purposes anyhow), and since $U$ is killed by some power of $p := {\rm{char}}(k)$, we may even assume that $U$ is killed by $p$, by making use of dimension induction and the exact sequence
\[
0 \longrightarrow [p]U \longrightarrow U \longrightarrow U/[p]U \longrightarrow 0,
\]
where $[p]: U \rightarrow U$ is the multiplication by $p$ map. By \cite[Lem.\,B.1.10]{cgp}, there is a finite \'etale $k$-subgroup $E \subset U$ such that $\overline{U} := U/E$ is split unipotent. Let $\pi: U \rightarrow \overline{U}$ denote the quotient map. Since $\overline{U}$ is split, there is a filtration $0 = \overline{U}_0 \subset \overline{U}_1 \subset \dots \overline{U}_n = \overline{U}$ with $\overline{U}_{i+1}/\overline{U}_i \simeq \Ga$. Let $U_i := \pi^{-1}(\overline{U}_i)^0$. Then $U_i$ is smooth, connected, and unipotent of dimension $i$, so $U_{i+1}/U_i$ is a $k$-form of $\Ga$. This proves our claim and the proposition.
\end{proof}

Although we could proceed more directly towards the proof of Theorem \ref{extisfinite}, due to lack of an alternative reference, we next prove that for a smooth connected commutative group scheme $G$ over a field $k$, the groups $\Ext^1(G, \Gm)$ and the usual derived functor Ext group, which we temporarily denote by $\Ext^1_{\rm{func}}(G, \Gm)$, are canonically isomorphic. Here, the derived functor is taken in the category of fppf abelian sheaves, but we may also simply take the Yoneda Ext group in the category of commutative finite-type $k$-group schemes, since any fppf sheaf extension of $G$ by $\Gm$ is necessarily representable by the effectivity of descent for relatively affine schemes.

\begin{lemma}
\label{commextension}
If $G$ is a smooth connected commutative group scheme (not necessarily affine) over a field $k$, then any extension
\[
1 \longrightarrow T \longrightarrow E \longrightarrow G \longrightarrow 1
\]
with $T$ a torus is commutative.
\end{lemma}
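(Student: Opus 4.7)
The plan is to reduce the commutativity of $E$ to the triviality of its commutator, which descends to a biadditive morphism $G \times G \to T$, and then to prove triviality via Rosenlicht's Unit Theorem.

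First I would verify that $E$ is smooth and connected. Smoothness holds because $E \to G$ is a $T$-torsor with smooth $T$ and smooth $G$; connectedness follows because this torsor is fppf surjective with connected fibers (geometrically isomorphic to $T$) over the connected base $G$. The conjugation action of $E$ on its normal subgroup $T$ then yields a $k$-homomorphism $E \to \underline{\Aut}_{T/k}$. Because $T$ is a torus, $\underline{\Aut}_{T/k}$ is \'etale (a $k$-form of $\mbox{GL}_r(\Z)$ arising from the Galois action on the character lattice of $T$), so this map from the connected $E$ sending $1 \mapsto 1$ must be trivial. Hence $T$ is central in $E$.

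Because $G$ is commutative, the commutator $[\cdot,\cdot]_E: E \times E \to E$ takes values in $\ker(E \to G) = T$. The centrality of $T$ forces this commutator to be bi-multiplicative, so it descends to a bi-homomorphism $c: G \times G \to T$ with $c(1,g) = c(g,1) = 1$ for all $g$; commutativity of $E$ is equivalent to $c \equiv 1$. Triviality of $c$ may be checked after base change to $\overline{k}$, so we may assume $k = \overline{k}$ and in particular $T = \Gm^r$; it then suffices to show each component $c_i: G \times G \to \Gm$ is trivial.

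To conclude, I would apply Rosenlicht's Unit Theorem (already invoked in the paper, e.g., in Lemma \ref{pictrivialfibers}) to the smooth connected algebraic group $G \times G$, obtaining $\overline{k}[G \times G]^{\times} = \overline{k}^{\times} \cdot \widehat{G \times G}$; evaluating a character at $(g,1)$ and $(1,g)$ gives $\widehat{G \times G} = \widehat{G} \oplus \widehat{G}$. Hence $c_i(g_1, g_2) = \lambda \chi_1(g_1) \chi_2(g_2)$ for some $\lambda \in \overline{k}^{\times}$ and $\chi_1, \chi_2 \in \widehat{G}$, and the boundary conditions $c_i(1, \cdot) = c_i(\cdot, 1) = 1$ force $\lambda = 1$ and $\chi_1 = \chi_2 = 0$, giving $c_i \equiv 1$. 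The main subtlety is the centrality step, which rests on the \'etaleness of $\underline{\Aut}_{T/k}$; once centrality is in hand the rest is formal, and works uniformly for $G$ not assumed affine, since Rosenlicht applies to arbitrary smooth connected algebraic groups.
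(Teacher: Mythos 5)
Your proof is correct, and the reduction to a biadditive commutator pairing $c\colon G\times G\to T$ via the centrality of $T$ (using that $\underline{\Aut}_{T/k}$ is \'etale and $E$ is connected) is exactly the paper's first half. Where you genuinely diverge is in killing the pairing. The paper first replaces $T$ by the preimage of the maximal torus of $G$ so as to assume $G$ contains no nontrivial torus, then invokes Chevalley's Theorem over $\overline{k}$ to write $G$ as an extension of an abelian variety $A$ by a smooth connected unipotent $U$, kills $\phi$ on $G\times U$ and $U\times G$ using the absence of nontrivial homomorphisms from unipotent groups to tori together with Zariski density of $G(\overline{k})$, and finally kills the induced pairing $A\times A\to T$ by properness. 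You instead observe that each component $c_i\colon G\times G\to\Gm$ is a global unit on the smooth connected group $G\times G$ and apply Rosenlicht's Unit Theorem (in its general form, valid for arbitrary smooth connected group schemes of finite type, not just affine ones --- for proper factors it degenerates to ``units are constants''), so that $c_i$ is a constant times a product of characters of the two factors; the normalizations $c_i(1,\cdot)=c_i(\cdot,1)=1$ then force $c_i\equiv 1$. In fact you only need the product decomposition of units, not even the character statement. Your route avoids both the preliminary reduction and Chevalley's Theorem and treats all cases uniformly, at the cost of invoking Rosenlicht for possibly non-affine groups (which is legitimate, and is a tool the paper already uses elsewhere); the paper's route uses only the affine version of nothing at all, trading Rosenlicht for the structure theory of algebraic groups. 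Both are complete proofs.
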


\begin{proof}
We may extend scalars and thereby assume that $k = \overline{k}$. Let $\pi$ denote the map from $E$ to $G$, and let $T' \subset G$ denote the maximal torus. Then $\pi^{-1}(T')$ is a torus. Replacing $T$ with $\pi^{-1}(T')$, therefore, we may assume that $G$ contains no nontrivial torus.

Consider the conjugation map $\phi: E \times E \rightarrow T$ given by $(e_1, e_2) \mapsto e_1e_2e_1^{-1}e_2^{-1}$. This is bi-additive and lands in $T$ because $G$ is commutative. We need to show that $\phi$ is trivial. Since $G$, hence $E$, is connected, the conjugation map $E \rightarrow {\rm{Aut}}_{T/k}$ from a connected $k$-group to an \'etale one is trivial, hence $T \subset E$ is central. (To see that ${\rm{Aut}}_{T/k}$ is \'etale, by Galois descent we may assume that $k = k_s$, and in that case it is just the constant group scheme ${\rm{GL}}_n(\Z)$, where $T_{k_s} \simeq \Gm^n$.) Therefore, $\phi$ descends to a biadditive pairing $G \times G \rightarrow T$ which we still denote by $\phi$.

Since $k = \overline{k}$ and $G$ contains no nontrivial torus, Chevalley's Theorem implies that $G$ is an extension of an abelian variety by a smooth connected unipotent group $U$.
For every $g \in G(k)$, $\phi|_{\{g\} \times U}$ is a $k$-homomorphism $U \rightarrow T$, which must be trivial because there are no nontrivial homomorphisms from a unipotent group scheme to a torus over a field. Since $k = \overline{k}$ and $G$ is smooth, $G(k)$ is Zariski dense in $G$. It therefore follows that $\phi$ is trivial on $G \times U$, and it is similarly trivial on $U \times G$. It therefore descends to a pairing $\phi: A \times A \rightarrow T$, where $A := G/U$ is an abelian variety. But since $A \times A$ is proper, it follows that $\phi$ must be trivial, as desired.
\end{proof}

Lemma \ref{commextension} implies that if $G$ is a smooth connected commutative group over a field $k$, then we obtain a natural homomorphism
\[
\Ext^1(G, \Gm) \longrightarrow \Ext^1_{\rm{func}}(G, \Gm).
\]
Yoneda's Lemma and the effectivity of fppf descent for relatively affine schemes imply that this map is an isomorphism:

\begin{proposition}
\label{derivedextagrees}
If $G$ is a smooth connected commutative group scheme over a field $k$, then the canonical map $\Ext^1(G, \Gm) \rightarrow \Ext^1_{\rm{func}}(G, \Gm)$ defined above is an isomorphism.
\end{proposition}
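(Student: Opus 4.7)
The plan is to identify both sides with the group of equivalence classes of Yoneda extensions computed in the abelian category $\mathscr{C}$ of fppf sheaves of abelian groups on $\mathrm{Spec}(k)$, using the representability of fppf $\Gm$-torsors.

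First, I would recall the classical identification: for any abelian category with enough injectives (as is the case for $\mathscr{C}$, being a Grothendieck topos), the derived functor $\Ext^1$ is naturally isomorphic to the Yoneda Ext group whose elements are equivalence classes of short exact sequences $0 \to \Gm \to \mathcal{E} \to G \to 0$ under the relation generated by morphisms of extensions, with Baer sum as group law. Thus $\Ext^1_{\rm{func}}(G,\Gm)$ is the group of such fppf-sheaf extension classes.

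Given such a sheaf extension, the projection $\mathcal{E} \to G$ exhibits $\mathcal{E}$ as a $\Gm$-torsor over $G$. Since $\Gm$ is affine, effectivity of fppf descent for relatively affine morphisms (as noted in the paragraph preceding the statement) shows that $\mathcal{E}$ is representable by a $k$-scheme. The fppf-abelian-sheaf group structure then endows $\mathcal{E}$ with the structure of a commutative $k$-group scheme compatible with the exact sequence. Thus every class in $\Ext^1_{\rm{func}}(G,\Gm)$ arises from a Yoneda extension in the category of commutative $k$-group schemes. Conversely, every extension in commutative $k$-group schemes gives an fppf-sheaf extension; well-definedness on the target requires commutativity of the total space, which is precisely Lemma \ref{commextension}. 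Since the Yoneda embedding of commutative $k$-group schemes into $\mathscr{C}$ is fully faithful, morphisms of extensions in the two categories coincide, so the equivalence relations and their Baer sums match. Chaining with the isomorphism $\Ext^1_{\rm{Yon}}(G,\Gm) \cong \Ext^1(G,\Gm)$ recalled in the introduction (essentially \cite[Th.\,4.12]{colliot-thelene}) yields the isomorphism asserted by the proposition, and unwinding definitions shows that it matches the canonical map defined above the statement.

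The only genuinely nontrivial input is the representability of fppf-sheaf extensions by commutative $k$-group schemes via descent for relatively affine morphisms; everything else is formal homological and Yoneda-style bookkeeping. The main obstacle, if there is one, is the clean verification that the Baer sum in $\mathscr{C}$ (defined by pulling back along the diagonal and pushing out along addition) is transported to the Baer sum of algebraic-group extensions under the representability step, but this is automatic from the fact that pullback and pushout of short exact sequences are preserved by the fully faithful embedding.
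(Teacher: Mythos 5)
Your proposal is correct and follows essentially the same route as the paper, whose (very terse) proof consists precisely of the observation that Yoneda's Lemma plus effectivity of fppf descent for relatively affine schemes identifies sheaf-theoretic extensions with group-scheme extensions, with Lemma \ref{commextension} supplying commutativity of the total space. You have simply spelled out the bookkeeping (identification of derived-functor Ext with Yoneda Ext in the category of fppf abelian sheaves, and compatibility of Baer sums under the fully faithful embedding) that the paper leaves implicit.
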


We now turn to the proof of Theorem \ref{extisfinite}.

\begin{proof}[Proof of Theorem $\ref{extisfinite}$]
By Proposition \ref{extfiniteformsofGa}, we may assume that $G = U$ is a $k$-form of $\Ga$. By Proposition \ref{derivedextagrees}, $\Ext^1(U, \Gm)$ may be interpreted as the usual derived functor Ext. Then \cite[Cor.\,2.3.4]{rostateduality} says that we have a natural isomorphism ${\rm{H}}^1(k, \widehat{U}) \simeq \Ext^1(U, \Gm)$, where $\widehat{U}$ denotes the fppf dual sheaf $\calHom(U, \Gm)$. So we need to show that if $k$ is a global function field, then the group ${\rm{H}}^1(k, \widehat{U})$ is finite for any $k$-form $U$ of $\Ga$. 

For an abelian group $B$, let $B^* := {\rm{Hom}}(B, \Q/\Z)$ denote the $\Q/\Z$ dual group.
Positive-dimensional Tate duality \cite[Thm.\,1.2.8]{rostateduality} furnishes an exact sequence
\[
{\rm{H}}^1(k, U) \longrightarrow {\rm{H}}^1(\A, U) \longrightarrow {\rm{H}}^1(k, \widehat{U})^* \longrightarrow {\rm{H}}^2(k, U),
\]
where $\A$ denotes the ring of adeles of $k$. Since $U$ is unipotent, ${\rm{H}}^2(k, U) = 0$ \cite[Prop.\,2.5.4(i)]{rostateduality}. Also, since $U$ is smooth and connected, the natural map ${\rm{H}}^1(\A, U) \rightarrow \prod_v {\rm{H}}^1(k_v, U)$ identifies ${\rm{H}}^1(\A, U)$ with $\oplus_v {\rm{H}}^1(k_v, U)$ \cite[Prop.\,5.2.2]{rostateduality}. Now we claim that for an abelian group $B$, finiteness of $B^*$ implies finiteness of $B$. Assuming this, we only need to show that the map ${\rm{H}}^1(k, U) \rightarrow \oplus_v {\rm{H}}^1(k_v, U)$ has finite cokernel, and this is \cite[Chap. IV, \S2.6, Prop.\,(b)]{oesterle}. 

It only remains to prove the claim that for an abelian group $B$, finiteness of $B^*$ implies finiteness of $B$. For this, it suffices to show that the map $B \rightarrow B^{**}$ is injective. That is, we need to show that for $0 \neq b \in B$, there exists $\phi \in B^*$ such that $\phi(b) \neq 0$. Clearly there exists a homomorphism $\phi': \langle b \rangle \rightarrow \Q/\Z$ such that $\phi'(b) \neq 0$, where $\langle b \rangle \subset B$ is the group generated by $b$. Since $\Q/\Z$ is divisible, it is an injective abelian group, hence we may extend $\phi'$ to a homomorphism $\phi: B \rightarrow \Q/\Z$, and then we have $\phi(b) \neq 0$, as required. This completes the proof of the Theorem.
\end{proof}

\begin{proof}[Proof of Theorem $\ref{extisfg}$]
By Lemma \ref{extexactsequence}, if the theorem holds for the two end terms in a short exact sequence, then it also holds for the middle term.

First assume that char$(k) = 0$, i.e., $k$ is a number field. Then Chevalley's Theorem reduces us to the cases in which $G$ is either a linear algebraic or an abelian variety. The former case is well--known (over arbitrary perfect fields). If $G = A$ is an abelian variety, then Ext$^1(A, \mathbf{G}_m)$ is the set of primitive line bundles on $A$, which equals Pic$^0(A) = A'(k)$, where $A'$ is the dual abelian variety \cite[\S13]{mumford}. The desired result therefore follows from the Mordell--Weil Theorem in this case.

Now suppose that char$(k) = p > 0$. Then by \cite[Thm.\,A.3.9]{cgp}, $G$ is an extension of a connected linear algebraic group by a semi-abelian variety. We may therefore assume that $G$ is either affine or an abelian variety. The affine case is handled by Theorem \ref{extisfinite}, and the abelian variety case once again follows from the Mordell--Weil Theorem.
\end{proof}

\section{Pathologies with forms of $\Ga$}
\label{examples}

In this section we discuss forms of $\Ga$ with a view toward constructing examples of various types of pathological behavior. (Proposition \ref{extfiniteformsofGa} is one indication that these groups may be a source of various pathologies.)

\begin{definition}
\label{genus}
Let $k$ be a field, $U$ a $k$-form of $\Ga$. Let $C$ be the regular compactification of $U$. That is, $C$ is the unique regular proper curve over $k$ equipped with an open embedding $U \hookrightarrow C$ with dense image. Then we define the genus of $U$ to be the arithmetic genus ${\rm{h}}^1(C, \mathcal{O}_C)$ of $C$.
\end{definition}

Since $C$ is proper, its Picard functor $\Pic_{C/k}$ is representable by a locally finite type $k$-group scheme. We will use this group scheme to study the Picard group of $U$.

\begin{proposition}
\label{jac=unipotent}
Let $U$ be a $k$-form of $\Ga$. Then the Jacobian ${\rm{Jac}}(C) := \Pic^0_{C/k}$ is a smooth connected commutative wound unipotent $k$-group of dimension equal to the genus of $U$.
\end{proposition}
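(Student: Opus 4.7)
The plan is to combine standard Picard-scheme theory with an analysis of $C$ over $\overline{k}$, and then handle the delicate woundness using the purely inseparable nature of the singularity at infinity.

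First, I would record the routine ingredients. Since $U$ is geometrically integral (being a form of $\Ga$), so is $C$, giving $\mathrm{H}^0(C, \calO_C) = k$; standard Picard-scheme theory then makes $\Pic^0_{C/k}$ representable by a finite-type commutative $k$-group scheme, connected by construction, with Lie algebra $\mathrm{H}^1(C, \calO_C)$ of $k$-dimension $g$. In particular $\dim \Jac(C) \le g$.

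Next, I would pin down the dimension and verify smoothness and unipotence by base changing to $\overline{k}$. Every form of $\Ga$ trivializes over $\overline{k}$, so $U_{\overline{k}} \simeq \Ga$, and the normalization $\nu \colon \widetilde{C_{\overline{k}}} \to C_{\overline{k}}$ is the identity on $U_{\overline{k}}$; hence $\widetilde{C_{\overline{k}}} = \mathbf{P}^1_{\overline{k}}$ and $\nu^{-1}(\infty) = \{\widetilde{\infty}\}$ is a single $\overline{k}$-point. The normalization exact sequence of unit sheaves on $C_{\overline{k}}$, together with $\Pic^0(\mathbf{P}^1_{\overline{k}}) = 0$ and $\mathrm{H}^0(\calO_{C_{\overline{k}}}^\times) = \overline{k}^\times = \mathrm{H}^0(\calO_{\widetilde{C_{\overline{k}}}}^\times)$, identifies $\Jac(C)_{\overline{k}}$ with the algebraic-group version of $\widetilde{\calO}_{\widetilde{\infty}}^\times / \calO_{C_{\overline{k}}, \infty}^\times$: a connected linear algebraic $\overline{k}$-group of dimension $\delta_\infty := \dim_{\overline{k}} \widetilde{\calO}_{\widetilde{\infty}} / \calO_{C_{\overline{k}}, \infty}$, and unipotent because the single preimage $\widetilde{\infty}$ forbids the toric contributions that accompany multi-branch (``nodal'') singularities. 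The genus formula $g = \mathrm{h}^1(C_{\overline{k}}, \calO) = \mathrm{h}^1(\mathbf{P}^1_{\overline{k}}, \calO) + \delta_\infty = \delta_\infty$ then gives $\dim \Jac(C) = g$, which matches the tangent-space bound and so forces smoothness; unipotence descends from $\overline{k}$, and commutativity is automatic.

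The main obstacle is woundness. The key observation is that the residue field $\kappa(\infty)$ of the unique closed point $\infty \in C \setminus U$ is purely inseparable over $k$: since $(C \setminus U)_{\overline{k}}$ is a single reduced $\overline{k}$-point, $\kappa(\infty) \otimes_k \overline{k}$ is a local ring with residue field $\overline{k}$, forcing $\kappa(\infty)/k$ purely inseparable. I would then present $\Jac(C)$ via Rosenlicht-style machinery as built from Weil restrictions $\R_{A/k}(\Gm)$ of $\Gm$ along finite local $k$-algebras $A$ with residue field $\kappa(\infty)$, and reduce woundness to $\mathrm{Hom}_k(\Ga, \Jac(C)) = 0$. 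The Weil-restriction adjunction gives $\mathrm{Hom}_k(\Ga, \R_{A/k}(\Gm)) = \mathrm{Hom}_A(\Ga_A, \Gm) = 0$, and a careful analysis of the long exact sequence for $1 \to \Gm \to \R_{A/k}(\Gm) \to \R_{A/k}(\Gm)/\Gm \to 1$---exploiting the purely inseparable structure of $A$ to control the connecting map into $\Ext^1(\Ga, \Gm)$---completes the woundness argument. This inseparable descent is the technical heart of the proof.
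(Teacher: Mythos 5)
The routine assertions (connectedness, commutativity, smoothness, and $\dim \Jac(C) = g$) are handled correctly in your proposal: representability, the identification of the tangent space with ${\rm{H}}^1(C, \calO_C)$, and the normalization analysis over $\overline{k}$ (where $U_{\overline{k}} \simeq \Ga$, $\widetilde{C_{\overline{k}}} = \mathbf{P}^1_{\overline{k}}$, and the unibranch singularity at infinity contributes a unipotent group of dimension $\delta_\infty = g$) are all standard and sound; in fact one can get smoothness even faster from ${\rm{H}}^2(C,\calO_C)=0$. This is more detail than the paper gives, which dispatches these points as ``cohomological considerations'' and then simply cites \cite[Thm.\,4.4]{achet} for wound unipotence. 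The problem is precisely the part you flag as the technical heart: the woundness argument has a genuine gap, in two respects. First, the claimed vanishing $\mathrm{Hom}_A(\Ga_A, \Gm) = 0$ is false for a non-reduced finite local $k$-algebra $A$: over $A = k[\epsilon]/(\epsilon^p)$ the truncated exponential $x \mapsto \sum_{i=0}^{p-1} (\epsilon x)^i / i!$ is a nontrivial homomorphism $\Ga \to \Gm$, and correspondingly $\R_{A/k}(\Gm)$ contains split unipotent subgroups (e.g.\ $\R_{A/k}(\Gm) \simeq \Gm \times \Ga^{\dim_k A - 1}$ when $A$ has residue field $k$). Since the conductor ring relevant to a generalized Jacobian is typically non-reduced, the Weil restrictions you propose to build $\Jac(C)$ from are themselves not wound, and the adjunction argument collapses. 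Second, and more structurally: the Rosenlicht-style presentation of the Jacobian via $\R_{A/k}(\Gm)$ requires the curve to be the pushforward of a singular modulus on a \emph{smooth} normalization, which for $C$ only happens after the purely inseparable base change $k'/k$ that makes the point at infinity rational; over $k$ itself $C$ is already regular (hence normal) and no such presentation exists. But woundness is only insensitive to \emph{separable} extensions --- indeed $\Jac(C)_{\overline{k}}$ is split unipotent --- so one cannot pass to $k'$ to run the argument. Your correct observation that $\kappa(\infty)/k$ is purely inseparable does not by itself bridge this.

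A fix that stays self-contained and actually uses the regularity of $C$ over $k$ (rather than of $C_{k'}$): a nonconstant map $\A^1_k \to \Pic^0_{C/k}$ would, since $C(k) \neq \emptyset$, come from a line bundle on $C \times \A^1_k$ nonconstant along the $\A^1$-direction; but $C \times \A^1_k$ is regular (smooth over the regular $C$), so ${\rm{Cl}}(C \times \A^1_k) = {\rm{Cl}}(C)$ and hence $\Pic(C \times \A^1_k) = \Pic(C)$, forcing every such map to be constant. This immediately gives woundness in the sense of the paper's definition (no nonconstant maps from $\A^1_k$). Alternatively, one can simply cite \cite[Thm.\,4.4]{achet} as the paper does.
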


\begin{proof}
The group ${\rm{Jac}}(C)$ is connected by definition, and clearly commutative. It is smooth of dimension equal to the arithmetic genus of $C$, as holds for any curve by cohomological considerations. The only thing that has to be checked, therefore, is that $Jac(C)$ is wound unipotent. This follows from \cite[Thm.\,4.4]{achet}.
\end{proof}

Before stating the next proposition, we recall the notion of woundness, which is the analogue for unipotent groups of anisotropicity for tori. A {\em $k$-wound} (or simply wound if $k$ is clear from context) unipotent group over a field $k$ is a smooth connected unipotent $k$-group scheme $U$ such that any map  of $k$-schemes $\A^1_k \rightarrow U$ from the affine line to $U$ is a constant map to some $u \in U(k)$. This is equivalent to saying that $U$ does not contain a copy of $\Ga$ \cite[Def.\,B.2.1, Cor.\,B.2.6]{cgp}. Woundness is insensitive to separable field extension. That is, if $U$ is a smooth connected unipotent $k$-group, and $K/k$ is a (not necessarily algebraic) separable field extension, then $U$ is $k$-wound if and only if $U_K$ is $K$-wound \cite[Prop.\,B.3.2]{cgp}. Clearly, smooth connected $k$-subgroups of wound unipotent $k$-groups are still wound. So are extensions of wound groups by other wound groups, as may be checked by using the formulation in terms of containing a copy of $\Ga$. Quotients of wound groups, however, need not be wound (even quotients by smooth connected subgroups). Over a perfect field, all smooth connected unipotent groups are split \cite[Thm.\,15.4(iii)]{borel}, hence no nontrivial wound unipotent groups exist over such fields. Over imperfect fields, however, there are many. Some examples are given in Example \ref{woundunipexamples}. Non-commutative examples are given in \cite[Ex.\,2.10]{conrad2}.

\begin{proposition}
\label{C-U}
Let $U$ be a form of $\Ga$ over a field $k$, and let $C$ be the regular compactification of $U$. Then $C - U$ consists of a single closed point $x$ of $C$ that becomes rational over some finite purely inseparable extension of $k$. The group $U$ is $k$-wound if and only if $x$ is not $k$-rational.
\end{proposition}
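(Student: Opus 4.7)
The plan is to first analyze $C - U$ after base change to $\overline{k}$, deduce the claim about the residue field $k(x)$, and then characterize woundness using the valuative criterion together with the smoothness/Riemann--Roch picture for genus zero curves.

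For the first assertion, I would base change to $\overline{k}$. Since $U_{\overline{k}} \simeq \Ga_{\overline{k}}$, the normalization of $C_{\overline{k}}$ is the unique smooth projective compactification of $\Ga_{\overline{k}}$, namely $\mathbf{P}^1_{\overline{k}}$, with complement a single rational point at infinity. Since the normalization map is surjective, $C_{\overline{k}} - U_{\overline{k}}$ consists of a single closed point. Pulling back along $C_{\overline{k}} \to C$, we conclude that $C - U$ consists of a single closed point $x$ and that $\mathrm{Spec}(k(x) \otimes_k \overline{k})$ has a unique element. Since the number of closed points here equals $[k(x):k]_{\mathrm{sep}}$, this forces $k(x)/k$ to be purely inseparable, so $x$ becomes rational over some finite purely inseparable extension of $k$.

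For the woundness characterization, the forward implication is clean. Suppose $U$ is not $k$-wound, so there exists a nonconstant $k$-morphism $f \colon \A^1 \to U$. Since $C$ is proper and $\mathbf{P}^1$ is a regular curve, the valuative criterion of properness extends $f$ to a morphism $\overline{f} \colon \mathbf{P}^1 \to C$. If $\overline{f}(\infty) \in U$, then $\overline{f}$ lands in the affine scheme $U$ and hence factors through $\mathrm{Spec}(\Gamma(\mathbf{P}^1, \calO)) = \mathrm{Spec}(k)$, contradicting nonconstancy. Thus $\overline{f}(\infty) = x$, and since $\infty$ is $k$-rational, so is $x$. For the converse, suppose $x \in C(k)$. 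Since $C$ is regular at $x$ with residue field $k$, it is $k$-smooth at $x$; combined with smoothness of the open $U \subset C$, we conclude $C$ is smooth over $k$. Then $C_{\overline{k}}$ is smooth, hence normal, and thus coincides with its normalization $\mathbf{P}^1_{\overline{k}}$. So $C$ is a smooth projective geometrically integral $k$-curve of genus zero carrying a $k$-rational point, and by Riemann--Roch $C \simeq \mathbf{P}^1_k$. Hence $U \simeq \mathbf{P}^1_k - \{x\} \simeq \A^1_k$ as a $k$-scheme, and the identity map exhibits $U$ as non-wound.

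The only real subtlety to watch is that $C_{\overline{k}}$ need not itself be regular in the wound case, so one must work with the normalization of $C_{\overline{k}}$ (rather than $C_{\overline{k}}$ itself) when identifying the single geometric point at infinity; dually, in the converse direction, it is precisely $k$-rationality of $x$ that upgrades regularity of $C$ to smoothness and thereby forces $C_{\overline{k}} = \mathbf{P}^1_{\overline{k}}$.
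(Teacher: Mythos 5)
Your proof is correct, but it follows a genuinely different route from the paper's. For the first assertion the paper simply cites Russell's Lemma 1.1(i), whereas you reprove it directly by passing to $\overline{k}$, identifying the normalization of $C_{\overline{k}}$ with $\mathbf{P}^1_{\overline{k}}$, and reading off pure inseparability of $k(x)/k$ from the fact that $\mathrm{Spec}(k(x)\otimes_k\overline{k})$ has a single point; this makes the statement self-contained. For the forward direction the paper uses that a non-wound one-dimensional group contains, hence equals, $\Ga$, so that $C\simeq\mathbf{P}^1_k$; your extension of a nonconstant $\A^1\to U$ to $\mathbf{P}^1\to C$ and the observation that $\overline{f}(\infty)$ must land in $C-U$ is equally valid and uses only the ``no nonconstant maps from $\A^1$'' formulation of woundness. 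The most substantive divergence is in the converse: the paper deduces from smoothness of $C$ that $\Jac(C)$ is an abelian variety, then invokes Proposition \ref{jac=unipotent} (which rests on Achet's theorem that $\Jac(C)$ is wound unipotent) to conclude $\Jac(C)=0$ and hence $C\simeq\mathbf{P}^1_k$; you instead note that smoothness forces $C_{\overline{k}}$ to coincide with its normalization $\mathbf{P}^1_{\overline{k}}$, so $C$ is a genus-zero smooth curve with a rational point and Riemann--Roch gives $C\simeq\mathbf{P}^1_k$. Your version is more elementary (no Picard schemes or Achet's result), at the cost of not connecting the statement to the Jacobian machinery the paper develops and reuses elsewhere. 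One small point worth making explicit is the standard fact you use that a regular $k$-rational point of a finite-type $k$-scheme is a smooth point; with that cited, the argument is complete.
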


\begin{proof}
The first assertion of the proposition follows from \cite[Lemma 1.1(i)]{russell}. If $U$ is not $k$-wound, then it contains a copy of $\Ga$, hence is $k$-isomorphic to $\Ga$ since it is one-dimensional. Then $C \simeq \mathbf{P}^1_k$, and $C - U$ consists of a rational point. Conversely, assume that $x$ is $k$-rational, and we will show that $U$ is not $k$-wound. If $x \in C(k)$, then since $C$ is regular, it is smooth at $x$, hence smooth everywhere. But this implies that $\Jac(C)$ is an abelian variety. By Proposition \ref{jac=unipotent}, therefore, $\Jac(C) = 0$. This implies that $C \simeq \mathbf{P}^1_k$. Then $U = C - \{x\} \simeq \A^1_k$, hence $U$ is not wound.
\end{proof}

\begin{proposition}
\label{picjacseq}
Let $U$ be a form of $\Ga$ over the field $k$, and let $J$ denote the Jacobian of the regular compactification $C$ of $U$. Then we have an exact sequence
\[
0 \longrightarrow J(k) \longrightarrow \Pic(U) \xlongrightarrow{{\rm{deg}}} \Z/p^n\Z \longrightarrow 0,
\]
where $p^n$ is the degree of the unique point of $C - U$. Here, the first map is the restriction map $\Pic^0(C) \rightarrow \Pic(U)$, and the second is given by sending $\mathscr{L} \in \Pic(U)$ to the degree of any line bundle on $C$ that restricts to $\mathscr{L}$.
\end{proposition}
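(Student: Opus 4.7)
The plan is to combine the standard excision sequence for Picard groups on the regular curve $C$ with the degree sequence for the Picard scheme of $C$, after first identifying $J(k)$ with the subgroup $\Pic^0(C) \subset \Pic(C)$.

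First I would observe that since $U$ is a $k$-group, it has a $k$-rational point (the identity $e$), hence so does $C$. This implies via the Leray spectral sequence for fppf $\Gm$-cohomology, using Hilbert 90 to kill $\mathrm{H}^1(k,\Gm)$ and using the rational point to show the boundary into $\Br(k)$ vanishes (because $\Br(k)\to\Br(C)$ is injective), that the natural map $\Pic(C)\to\Pic_{C/k}(k)$ is an isomorphism. Restricting to the degree-$0$ part, this identifies $J(k) = \Pic^0_{C/k}(k)$ with the subgroup $\Pic^0(C)\subset\Pic(C)$ of line bundles of degree $0$.

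Next, since $C$ is a regular integral curve with $U = C\setminus\{x\}$, the comparison of Weil divisor groups (using regularity to identify Weil divisors with Cartier divisors on both $C$ and $U$) yields the short excision sequence
\[
\Z \xlongrightarrow{\;n \mapsto \calO_C(nx)\;} \Pic(C) \longrightarrow \Pic(U) \longrightarrow 0.
\]
I would combine this with the degree exact sequence $0\to\Pic^0(C)\to\Pic(C)\xrightarrow{\deg}\Z$, and use $\deg(\calO_C(x)) = [K(x):k] = p^n$. It follows formally that the composite $\Pic(C)\xrightarrow{\deg}\Z\twoheadrightarrow\Z/p^n\Z$ factors through $\Pic(U)$ (since the kernel of the first arrow contains the image of $\Z$), giving the claimed well-defined map $\deg\colon\Pic(U)\to\Z/p^n\Z$.

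Finally I would check exactness. Surjectivity is immediate: $\calO_C(e)$ has degree $1$ because $e\in U(k)$ is $k$-rational, and its image in $\Pic(U)$ maps to $1\in\Z/p^n\Z$. For the kernel: if $\mathscr{L}\in\Pic(U)$ has trivial image in $\Z/p^n\Z$, then any lift $\mathscr{L}'\in\Pic(C)$ has degree in $p^n\Z$, so after subtracting an integer multiple of $\calO_C(x)$ we obtain a lift in $\Pic^0(C) = J(k)$; hence $\mathscr{L}$ is in the image of $J(k)$. Conversely any class coming from $\Pic^0(C)$ is sent to $0$ by construction. Injectivity of $J(k)\to\Pic(U)$ follows from the excision sequence: its kernel is $\Pic^0(C)\cap\Z\cdot[\calO_C(x)]$, and since $\calO_C(x)$ has nonzero degree $p^n$, this intersection is trivial. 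The only nonroutine ingredient here is the identification $J(k)=\Pic^0(C)$, which genuinely needs both Hilbert 90 and the rational point on $C$; everything else is a direct diagram chase.
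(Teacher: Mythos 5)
Your argument is correct. The paper itself does not give a proof: it simply cites the exact sequence as \cite[(2.1.3)]{achet}, so your self-contained derivation is, in effect, a reconstruction of the standard argument behind that reference. All the ingredients check out: $H^0(C,\mathcal{O}_C)=k$ because $k$ is algebraically closed in $k[U]$ (so the Leray sequence plus Hilbert 90 and the rational point $e\in U(k)\subset C(k)$ do identify $\Pic(C)$ with $\Pic_{C/k}(k)$, and hence $\Pic^0(C)$ with $J(k)$, using that $C$ is geometrically integral so the degree-zero part is the identity component); the excision sequence $\Z\to\Pic(C)\to\Pic(U)\to 0$ is valid because $C$ is regular and $C-U$ is the single closed point $x$ of degree $p^n$ (Proposition \ref{C-U}); and the diagram chase giving exactness at each spot is as you describe, with surjectivity of $\deg$ witnessed by $\mathcal{O}_C(e)$. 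The only point worth flagging is that you should make explicit why $\Pic^0_{C/k}$ coincides with the degree-zero locus for this (possibly geometrically non-reduced after base change, but geometrically integral) curve; this is standard but is the one place where the identification $J(k)=\Pic^0(C)$ uses more than Hilbert 90 and the rational point.
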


\begin{proof}
This exact sequence is \cite[(2.1.3)]{achet}.
\end{proof}

\begin{proposition}
\label{positivegenus}
A $k$-form $U$ of $\Ga$ has genus $0$ if and only if it is $k$-rational. If $p > 2$, then this holds if and only if $U \simeq \Ga$, while if $p = 2$, then this holds if and only if $U \simeq \Ga$ or $U$ is $k$-isomorphic to the subgroup of $\Ga^2$ given by the equation $Y^2 = X + aX^2$ for some $a \in k - k^2$.
\end{proposition}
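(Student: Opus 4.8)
The plan is to first prove ``genus $0 \iff k$-rational,'' and then to classify the genus-$0$ forms, treating ${\rm{char}}(k) > 2$ and ${\rm{char}}(k) = 2$ separately. Write $C$ for the regular compactification and $C - U = \{x\}$ (Proposition \ref{C-U}), with $[k(x):k] = p^n$, and recall from Proposition \ref{jac=unipotent} that the genus of $U$ equals $\dim \Jac(C)$, so genus $0$ means $\Jac(C) = 0$. For the first equivalence: if $U$ is $k$-rational then $k(U) = k(t)$, and as $\mathbf{P}^1$ is the unique regular proper curve with purely transcendental function field, $C \simeq \mathbf{P}^1$ has genus ${\rm{h}}^1(\mathbf{P}^1, \calO) = 0$. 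Conversely, if the genus is $0$, then $C$ is geometrically integral (since $k(U)\otimes_k\overline{k} = \overline{k}(\A^1)$ is a field, so $k(U)/k$ is a regular extension) and carries the $k$-rational point $e \in U(k)$; Riemann--Roch for $\calO_C(e)$ (of degree $1 > 2\cdot 0 - 2$) gives ${\rm{h}}^0(\calO_C(e)) = 2$ and a degree-one morphism $C \to \mathbf{P}^1$, which is an isomorphism, so $U$ is an open subscheme of $\mathbf{P}^1$ and is $k$-rational.

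For the classification, genus $0$ now means $C \simeq \mathbf{P}^1$. If $x$ is $k$-rational then $U \simeq \Ga$ by Proposition \ref{C-U}, so assume $U$ is wound, i.e. $x$ is non-rational, purely inseparable of degree $p^n$ with $n \geq 1$, and over $\overline{k}$ corresponds to $z = \xi$ in an affine coordinate $z$ on $\mathbf{P}^1$, where $\xi \in \overline{k} - k$ and $\xi^{p^n} \in k$. Every translation of $U$ extends to an automorphism of the unique regular model $C \simeq \mathbf{P}^1$ carrying the one-point set $C - U = \{x\}$ to itself, hence fixing $x$; this gives a closed immersion of $k$-groups identifying $U$ with the unipotent radical $R_u\bigl({\rm{Stab}}_{{\rm{PGL}}_{2,k}}(x)\bigr)$. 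Over $\overline{k}$ the unipotent elements fixing $\xi$ are $M(c) = I + cN$ with $N = \left(\begin{smallmatrix} \xi & -\xi^2 \\ 1 & -\xi\end{smallmatrix}\right)$ nilpotent; note ${\rm{tr}}\,M(c) = 2$, and $M(c)M(c') = M(c + c')$ since $N^2 = 0$, confirming $U_{\overline{k}} \simeq \Ga$.

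Then I would read off the dichotomy from the functor of points of $U = R_u({\rm{Stab}}(x))$, whose $R$-points are those $M(c)$ ($c \in R \otimes_k k(x)$) that are $(R\otimes_k k(x))^\times$-proportional to a matrix over $R$, say $M(c) = \mu P$ with $P \in {\rm{Mat}}_2(R)$. Taking traces gives $\mu\,{\rm{tr}}(P) = 2$. If $p > 2$ this is a unit, forcing $\mu^{-1} = {\rm{tr}}(P)/2 \in R$ and hence $\mu \in R^\times$; then $c \in R$ and $c\xi \in R$ inside $R \otimes_k k(x)$, which forces $c = 0$. Thus $U$ is trivial, a contradiction, so there are no wound genus-$0$ forms and genus $0 \iff U \simeq \Ga$ when $p > 2$. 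If $p = 2$ the trace condition is vacuous, and comparing entries of $M(c) = \mu P$ forces the ratio $\xi^2 \in k$, whence $n = 1$ and $a := \xi^2 \in k - k^2$; the resulting group is $\bigl\{z \mapsto (qz + ar)/(rz + q) : q, r \in R\bigr\}$, which the substitution $z = Y/X$ identifies with the subgroup $Y^2 = X + aX^2$ of $\Ga^2$. For the converse, each $a \in k - k^2$ gives a smooth (its gradient is $(1,0)$) wound (its point at infinity $z = \sqrt{a}$ is non-rational) form of $\Ga$ whose function field is $k(z)$ via $z = Y/X$, hence of genus $0$.

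The main obstacle is the classification, and specifically the rigorous functor-of-points computation of $R_u({\rm{Stab}}_{{\rm{PGL}}_{2,k}}(x))$ together with the explicit identification of the characteristic-$2$ group with $Y^2 = X + aX^2$, while keeping careful track of the group law under $U \hookrightarrow {\rm{PGL}}_2$. The characteristic-$2$-versus-larger dichotomy and the reduction to $n = 1$ both come down to the single clean fact that ${\rm{tr}}\,M(c) = 2$, which is nonzero exactly when $p \neq 2$. A minor but necessary technical point is the geometric integrality of $C$ (and uniqueness of the regular compactification), which is what makes Riemann--Roch available in the first step.
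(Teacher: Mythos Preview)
Your proof is correct and takes a genuinely different route from the paper. The paper disposes of the statement in two lines: the equivalence ``genus $0 \iff k$-rational'' is attributed to the classical classification of regular proper genus-$0$ curves with a rational point, and the classification of rational forms of $\Ga$ is cited directly from \cite[Thm.\,6.9.2]{kmt}. You instead give a self-contained argument: Riemann--Roch for the first equivalence, and for the classification you embed $U$ into $\mathrm{PGL}_2$ via its translation action on $C \simeq \mathbf{P}^1$, identify $U$ with the unipotent radical of the stabilizer of the boundary point $x$, and then read off the dichotomy from the single relation $\mathrm{tr}\,M(c) = 2$ together with an entry-by-entry comparison.

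Two remarks to tighten the argument. First, in the functor-of-points step you should specify the test ring: take $R = k(U)$, so that the generic point furnishes a \emph{nontrivial} element $[P] = [M(c)]$ with $c \neq 0$ (hence $P_{21} \neq 0$), and so that $R \cap \overline{k} = k$ by geometric integrality of $U$; this is what makes ``$\xi^2 = P_{12}/P_{21} \in R$ forces $\xi^2 \in k$'' rigorous, and likewise ``$\xi \in R$ forces $\xi \in k$'' in the $p>2$ case. Second, the assertion that left translations extend to a morphism $U \times \mathbf{P}^1 \to \mathbf{P}^1$ (hence a $k$-group homomorphism $U \to \mathrm{PGL}_2$) is exactly Lemma~\ref{multiplicationextends}; that lemma is placed after the present proposition in the paper but is logically independent of it, so you may invoke it (or reproduce its short translation-trick argument) without circularity.

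What each approach buys: the paper's citation to \cite{kmt} is quick and ties the result into the existing literature on Russell groups, while your $\mathrm{PGL}_2$-stabilizer computation is self-contained, explains transparently \emph{why} $p = 2$ is special (the trace obstruction vanishes), and produces the explicit isomorphism with $\{Y^2 = X + aX^2\}$ along the way rather than as a separate verification.
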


\begin{proof}
The first assertion is a classical fact classifying regular proper curves of genus $0$ which contain a rational point. Further, the group $U$ is a so-called Russell group \cite[\S2.6]{kmt}, so the second assertion is \cite[Thm.\,6.9.2]{kmt}.
\end{proof}

\begin{lemma}
\label{multiplicationextends}
Let $k$ be a field, and let $U$ be a one-dimensional smooth connected $k$-group scheme. Let $C$ denote the regular compactification of $U$. Then the action $U \times U \rightarrow U$ of $U$ on itself by left-multiplication extends uniquely to an action $U \times C \rightarrow C$ of $U$ on $C$.
\end{lemma}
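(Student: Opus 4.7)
The plan is to prove uniqueness directly from separatedness, then reduce existence to the case $k = k_s$ and use the group structure to eliminate the indeterminacy of the natural rational extension.

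Uniqueness is immediate: any two extensions of $m$ agree on the schematically dense open $U \times U \subset U \times C$, and $C$ is separated (being proper), so they must coincide on all of $U \times C$.

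For existence, I would first reduce to the case $k = k_s$: by the uniqueness just established, any extension over $k_s$ is $\mathrm{Gal}(k_s/k)$-equivariant, hence descends to a $k$-morphism $U \times C \to C$ extending $m$. Assuming $k = k_s$, the scheme $U \times_k C$ is regular of dimension $2$, since the projection $U \times C \to C$ is smooth (as a base change of $U \to {\rm{Spec}}(k)$) and $C$ is regular. By the classical extension theorem for rational maps from regular Noetherian schemes to proper schemes, the rational map $\widetilde{m}: U \times C \dashrightarrow C$ extending the composition $U \times U \to U \hookrightarrow C$ has indeterminacy locus $Z$ of codimension at least $2$, so $Z$ is a finite set of closed points.

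To show $Z = \emptyset$, I would exploit the group structure. For each $w \in U(k)$, left-multiplication $L_w: U \to U$ is a $k$-automorphism, and since $C$ is the unique regular proper $k$-model of the function field $k(C) = k(U)$, it extends uniquely to an automorphism of $C$, still denoted $L_w$. Setting $s_w: U \times C \to U \times C$, $(u, c) \mapsto (wu, c)$, the identity $\widetilde{m} \circ s_w = L_w \circ \widetilde{m}$ holds on the dense open $U \times U$, hence as rational maps; since both $s_w$ and $L_w$ are regular automorphisms, comparing indeterminacy loci yields $s_w^{-1}(Z) = Z$ for every $w \in U(k)$. Because $k = k_s$ and $U$ is smooth, $U(k)$ is Zariski dense in $U$; together with the reducedness of $Z$, this upgrades the pointwise invariance to scheme-theoretic stability of $Z$ under the full $U$-action $s$. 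But the $s$-orbits on $U \times C$ are the $1$-dimensional subsets $U \times \{c_0\}$, so a $0$-dimensional $U$-stable closed subscheme must be empty. The main delicate step is this last promotion from $U(k)$-stability to scheme-theoretic $U$-stability, where Zariski density (requiring $k = k_s$) and reducedness of $Z$ are both essential.
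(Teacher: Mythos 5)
Your proof is correct. It shares the paper's overall skeleton --- uniqueness from reducedness of $U \times C$ and separatedness of $C$, Galois descent to $k = k_s$, and regularity of $U \times C$ plus properness of $C$ to extend $m$ to a rational map $\pi: U \times C \dashrightarrow C$ undefined only at finitely many closed points --- but the two arguments genuinely diverge at the decisive step of killing the indeterminacy locus $Z$. The paper is constructive: using density of $U(k)$ it chooses a single $u_0 \in U(k)$ such that $\pi$ is defined on $\{u_0\} \times C$ and such that the indeterminacy loci of $(u,c) \mapsto \pi(u,c)$ and $(u,c) \mapsto \pi(u_0u, c)$ are disjoint, then glues $\pi$ with $(u,c) \mapsto \pi(u_0^{-1}, \pi(u_0u, c))$ to get a morphism on all of $U \times C$. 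You instead show $Z = \emptyset$ by symmetry: each $L_w$ ($w \in U(k)$) extends to an automorphism of the regular proper curve $C$, the identity $\widetilde{m} \circ s_w = L_w \circ \widetilde{m}$ of rational maps forces $s_w^{-1}(Z) = Z$, and a finite set invariant under the free translation action of the infinite Zariski-dense group $U(k)$ must be empty. Both are sound; your version avoids the (mild) fuss of selecting $u_0$ with disjoint bad loci, at the cost of the auxiliary fact that $L_w$ extends to an automorphism of $C$ (which is fine, by the valuative criterion on the regular proper curve). Two points you leave implicit but should record: that $C_{k_s}$ is still regular, so it really is the regular compactification of $U_{k_s}$ (true, since regularity is preserved under separable algebraic base change --- the paper flags this), and that the extended morphism actually satisfies the axioms of a group action; the latter follows from exactly your uniqueness argument applied to the two maps $U \times U \times C \rightrightarrows C$ that agree on the dense open $U \times U \times U$.
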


\begin{proof}
The scheme $U \times C$ is smooth over $C$, hence regular. In particular, $U \times C$ is reduced and $C$ is separated, so if the action extends, then it extends uniquely. Similarly, if the multiplication map $U \times U \rightarrow U$ extends at all to a map $U \times C \rightarrow C$, then that map is a group scheme action of $U$ on $C$.

To show that it extends, we may extend scalars to $k = k_s$ (over which $C$ remains regular, since this is preserved by \'etale base change) by using Galois descent and the uniqueness of the extension. We note that the regularity of $U \times C$ and the properness of $C$ together imply that the map $U \times U \rightarrow U \subset C$ extends to a rational map $\pi: U \times C \dashrightarrow C$ defined away from finitely many closed points of the 2-dimensional scheme $U \times C$. Since $k = k_s$ and $U$ is smooth, $U(k)$ is Zariski dense in $U$, so we may choose $u_0 \in U(k)$ such that $\pi$ is defined on $u_0 \times C$, and such that the loci on which the maps $\pi(u, c)$ and $\pi(u_0+u, c)$ are not defined are disjoint. Then the map $f(u, c) := \pi(-u_0, \pi(u_0+u, c))$ is defined everywhere that $\pi$ is not, and they agree on $U \times U$, hence on their entire locus of common definition. They therefore glue to give a map $U \times C \rightarrow C$ extending the left action $U \times U \rightarrow U$.
\end{proof}

\begin{proposition}
\label{extcontainsavariety}
Let $U$ be a $k$-form of $\mathbf{G}_a$ of positive genus. Then there is a smooth connected positive-dimensional closed $k$-subgroup scheme $G \subset \Jac(C)$ such that $G(k) \subset \Ext^1(U, \Gm)$ via the inclusion $G(k) \subset \Jac(C)(k) = \Pic^0(C) \hookrightarrow \Pic(U)$.
\end{proposition}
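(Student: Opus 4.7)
The plan is to exhibit $G$ as (the reduced identity component of) the scheme-theoretic fixed locus of a natural $U$-action on $J := \Jac(C)$, and to verify the inclusion $G(k) \subset \Ext^1(U, \Gm)$ by a seesaw argument. The main obstacle will be proving that this fixed locus is positive-dimensional.

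By Lemma~\ref{multiplicationextends}, the left-translation action $m: U \times U \to U$ extends uniquely to an action $\widetilde{m}: U \times C \to C$. For any $u \in U(S)$, translation $t_u: C_S \to C_S$ is an automorphism, and $t_u^*$ preserves degree and hence preserves $J_S = \Pic^0_{C_S/S}$. This yields a morphism of group functors $\rho: U \to \Aut_{k\text{-gp}}(J)$ with $\rho(0) = \mathrm{id}_J$, and for each $\mathscr{M} \in J$ a $k$-morphism $\phi_{\mathscr{M}}: U \to J$, $u \mapsto t_u^*\mathscr{M} - \mathscr{M}$, which is additive in $\mathscr{M}$. Let $J^U \subseteq J$ denote the scheme-theoretic kernel of the adjoint morphism $J \to \underline{\mathrm{Mor}}_{k,0}(U, J)$, $\mathscr{M} \mapsto \phi_{\mathscr{M}}$; this is a closed subgroup scheme, and I propose to take $G := (J^U)^0_{\mathrm{red}}$.

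For $\mathscr{M} \in J^U(k)$ and $\mathscr{L} := \mathscr{M}|_U$, consider
\[
\mathscr{N} := \widetilde{m}^*\mathscr{M} \otimes \pi_1^*\mathscr{L}^{-1} \otimes \pi_2^*\mathscr{M}^{-1} \in \Pic(U \times C).
\]
Its restriction to $\{u\} \times C$ (as $u$ varies scheme-theoretically) equals $\phi_{\mathscr{M}}(u) \in J$, which vanishes by hypothesis; its restriction to $U \times \{0\}$ is directly seen to be trivial, using $\widetilde{m}(u, 0) = u$ and the rigidification of $\mathscr{M}$ at the identity $0 \in C(k)$. The seesaw principle forces $\mathscr{N} \equiv 0$ on $U \times C$, so its restriction to $U \times U$ gives the defining equality $m^*\mathscr{L} \cong \pi_1^*\mathscr{L} \otimes \pi_2^*\mathscr{L}$ for $\mathscr{L} \in \Ext^1(U, \Gm)$; the same argument after any base change $S \to \mathrm{Spec}(k)$ handles arbitrary $S$-points of $U$.

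The crux, and what I expect to be the hard part, is showing $\dim J^U \geq 1$. When $g = 1$, $J$ is itself a wound one-dimensional form of $\Ga$ by Proposition~\ref{jac=unipotent}, so $\Aut_{k\text{-gp}}(J)$ is a $k$-form of $\Gm$; by Rosenlicht's unit theorem, any $k$-morphism from the wound $U$ to a torus sending $0 \mapsto \mathrm{id}$ is constant, so $\rho = \mathrm{id}_J$ and $J^U = J$. For $g > 1$, one seeks to reduce to the one-dimensional case by filtering $J$ by wound one-dimensional subquotients (as in the proof of Proposition~\ref{extfiniteformsofGa}) and propagating invariance through the filtration; the crucial structural input is the cocycle identity
\[
\phi_{\mathscr{M}}(u + v) - \phi_{\mathscr{M}}(u) - \phi_{\mathscr{M}}(v) = (t_v^* - \mathrm{id})\bigl(\phi_{\mathscr{M}}(u)\bigr),
\]
which shows that $\phi_{\mathscr{M}}$ is a group homomorphism exactly when its image lies in $J^U$. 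The main technical difficulty is choosing a filtration compatible with the $U$-action and bootstrapping invariance from each subquotient up to all of $J$.
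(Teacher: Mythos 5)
Your overall strategy --- extend the translation action to $C$ via Lemma \ref{multiplicationextends}, transport it to $J = \Jac(C)$, identify $\Ext^1(U,\Gm)$ inside $\Pic(U)$ with the classes fixed by this action, and then look for a positive-dimensional subgroup of fixed points --- is exactly the paper's, and your seesaw verification of $G(k) \subset \Ext^1(U, \Gm)$ is a reasonable (if more elaborate) substitute for the paper's one-line identification of the induced action on $\Jac(C)(k) \subset \Pic(U)$ with translation. But you have correctly located the crux and then not crossed it. Your genus-one argument rests on the claim that $\Aut_{k\text{-gp}}(J)$ is a $k$-form of $\Gm$ for $J$ a wound one-dimensional unipotent group; this is unjustified and appears to be false (for $J = \{y^p = x + ax^p\}$, for instance, the scaling automorphisms form only the finite group $\mathbf{F}_p^{\times}$, and there is no reason for the full automorphism functor to be a torus, nor even obviously representable and \'etale). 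For $g > 1$ you explicitly defer to an unspecified $U$-stable filtration of $J$ by one-dimensional subquotients and a ``bootstrapping'' step, neither of which is constructed; note also that quotients of wound groups need not be wound, so the subquotients of an arbitrary filtration need not fall under your genus-one case. As it stands, the positive-dimensionality of the fixed locus --- which is the entire content of the proposition --- is not proved.

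The paper closes this gap with a short nilpotency argument adapted from Totaro: form the semidirect product $N := \Jac(C) \rtimes U$, which is unipotent and hence nilpotent, and set $P_0 := \Jac(C)$, $P_{i+1} := [N, P_i]$. These are smooth connected $k$-subgroups of $\Jac(C)$ and the sequence terminates at $0$; if $n$ is the first index with $P_n = 0$, then $n > 0$ (since $\Jac(C) \neq 0$ when the genus is positive), and $P_{n-1}$ is a nontrivial smooth connected --- hence positive-dimensional --- subgroup on which $U$ acts trivially, because $[N, P_{n-1}] = 0$. This handles all genera at once, requires no filtration compatible with the $U$-action, and avoids any analysis of automorphism groups of wound forms of $\Ga$. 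Replacing your fixed-locus dimension count with this construction (taking $G := P_{n-1}$) would complete your argument.
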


\begin{proof}
We adapt an idea of Totaro's from the proof of \cite[Lem.\,9.4]{totaro}. The action described by Lemma \ref{multiplicationextends} yields an action $U \times \Jac(C) \rightarrow \Jac(C)$. Via the inclusion $\Jac(C)(k) \hookrightarrow \Pic(U)$, this action is precisely the translation action of $U$ on its Picard group. For an element of $\Pic(U)$ to lie in $\Ext^1(U, \Gm)$ is the same as being fixed by this action. It therefore suffices to show that there is a smooth connected positive-dimensional closed $k$-subgroup scheme $G \subset \Jac(C)$ such that $U$ acts trivially on $G$.

Consider the unipotent group $N := \Jac(C) \rtimes U$, and consider the sequence of smooth connected $k$-subgroups $P_i \subset \Jac(C)$ defined inductively by $P_0 = \Jac(C)$, $P_{n+1} := [N, P_n]$. Since $N$ is unipotent, it is a nilpotent group, so $P_n = 0$ for some $n$. If $U$ has positive genus, and if we let $n$ be the first such index, then $n > 0$, and $U$ acts trivially on $P_{n-1}$.
\end{proof}

\begin{example}
\label{woundunipexamples}
Let us remark that Proposition \ref{extcontainsavariety} applies to a nonempty set of examples! 
In fact, there are many examples over any imperfect field $k$ of $k$-forms of $\mathbf{G}_a$ of positive genus. First consider the case $p > 2$, let $a \in k - k^p$, and let $U := \{ y^p = x + ax^p \} \subset \mathbf{G}_a^2$. Then the regular compactification of $U$ is the curve $C := \{ y^p = xz^{p-1} + ax^p \} \subset \mathbf{P}^2_k$, which has genus ${p-1 \choose 2} > 0$.

Now suppose that $p = 2$. Then we take $U := \{ y^{p^2} = x + ax^{p^2} \}$, where once again $a \in k- k^p$. Then the regular compactification of $U$ is $C := \{ y^{p^2} = xz^{p^2-1} + ax^{p^2} \}$, which has genus ${p^2 - 1 \choose 2} > 0$. This example also works for $p > 2$, but we wanted to give as simple an example as possible in each case.
\end{example}

Recall that a local function field is the completion of a global function field at some place. Equivalently, it is the field $\kappa((T))$ of Laurent series in a single variable over some finite field $\kappa$. As a consequence of Proposition \ref{extcontainsavariety}, we obtain the following result.

\begin{proposition}
\label{infiniteextseparablyclosed}
Let $k$ be an imperfect separably closed field or a local function field, and let $U$ be a $k$-form of $\Ga$. Then $\Ext^1(U, \Gm)$ is finite if and only if $U \simeq \Ga$, or if $p=2$ and $U \simeq \{Y^2 = X + aX^2\}$ for some $a \in k - k^2$.
\end{proposition}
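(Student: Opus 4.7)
The plan is to split on whether $U$ has genus $0$. By Proposition \ref{positivegenus}, the condition ``$U \simeq \Ga$, or $p=2$ and $U \simeq \{Y^2 = X + aX^2\}$ for some $a \in k - k^2$'' is equivalent to $U$ being $k$-rational, which in turn is equivalent to $U$ having genus $0$, or equivalently to the regular compactification $C$ being isomorphic to $\mathbf{P}^1_k$.

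For the easy direction, suppose $U$ has genus $0$. Then $C \simeq \mathbf{P}^1_k$, so $\Jac(C) = 0$, and the exact sequence of Proposition \ref{picjacseq} collapses to $\Pic(U) \simeq \Z/p^n\Z$, which is finite. Since $\Ext^1(U, \Gm) \subset \Pic(U)$, this gives the finiteness of $\Ext^1(U, \Gm)$, and in fact this argument is valid over any base field.

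For the other direction, assume $U$ has positive genus; we must show $\Ext^1(U, \Gm)$ is infinite when $k$ is imperfect separably closed or a local function field. By Proposition \ref{extcontainsavariety}, there is a smooth connected positive-dimensional closed $k$-subgroup $G \subset \Jac(C)$ whose $k$-points inject into $\Ext^1(U, \Gm)$, so it suffices to show $G(k)$ is infinite. If $k$ is separably closed, smoothness of the nonempty positive-dimensional $k$-scheme $G$ forces $G(k)$ to be Zariski dense in $G$, hence infinite. If $k$ is a local function field, $G$ is smooth at the identity $1 \in G(k)$, so an étale chart together with Hensel's lemma (or equivalently the $v$-adic implicit function theorem) produces a $v$-analytic open neighborhood of $1$ in $G(k)$ that is homeomorphic to an open subset of $k^{\dim G}$; since $k$ is infinite, this neighborhood, and hence $G(k)$, is infinite. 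The only nontrivial ingredient is Proposition \ref{extcontainsavariety}; once that is in hand, showing $G(k)$ is infinite in either setting is a standard application of rational-point density for smooth schemes over infinite fields.
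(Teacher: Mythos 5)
Your proof is correct and follows essentially the same route as the paper: reduce to the genus dichotomy via Proposition \ref{positivegenus}, use $\Jac(C)=0$ and Proposition \ref{picjacseq} in the genus-$0$ case, and in the positive-genus case apply Proposition \ref{extcontainsavariety} together with the infinitude of $G(k)$ for a smooth positive-dimensional group over a separably closed or local field. Your elaboration of the local-field case via Hensel's lemma is just an unpacking of the paper's ``positive-dimensional Lie group'' remark.
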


\begin{proof}
By Proposition \ref{positivegenus}, the proposition is saying that $\Ext^1(U, \Gm)$ is finite if and only if $U$ has genus $0$. If $U$ has genus $0$, then $\Jac(C) = 0$, so $\Ext^1(U, \Gm)$ is finite by Proposition \ref{picjacseq}.

Next suppose that $U$ has positive genus, so $\Jac(C)$ is positive-dimensional. Choosing $G$ as in Proposition \ref{extcontainsavariety}, we see that it suffices to show that any smooth $k$-group scheme $G$ of positive dimension has infinitely many rational points. This is true when $k = k_s$, since then $G(k)$ is Zariski dense in $G$, and when $k$ is a local function field, since then $G(k)$ is a positive-dimensional Lie group over $k$.
\end{proof}

\begin{remark}
Proposition \ref{infiniteextseparablyclosed} shows that the conclusion of Theorem \ref{extisfinite} fails over every imperfect separably closed field and every local function field.
\end{remark}

\begin{lemma}
\label{notlinequiv}
Let $k$ be a field, and let $U$ be a $k$-form of $\Ga$ of positive genus. Then for any distinct points $x, y \in U(k_s)$, the divisors $[x]$ and $[y]$ are not linearly equivalent.
\end{lemma}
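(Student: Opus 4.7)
The plan is to leverage Proposition \ref{picjacseq} to transport the linear-equivalence question from $U$ to its regular compactification $C$, where a linear equivalence between two distinct degree-$1$ divisors forces $C$ to be rational, contradicting the positive genus hypothesis.

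First I would base change to $k_s$. This is harmless: regularity is preserved under \'etale base change, so $C_{k_s}$ is still the regular compactification of $U_{k_s}$; the arithmetic genus $\mathrm{h}^1(C,\calO_C)$ is preserved by flat base change, so $U_{k_s}$ still has positive genus; and the uniqueness assertion of Proposition \ref{C-U} guarantees that $C_{k_s} - U_{k_s}$ remains a single closed point. After this reduction, we may assume $k$ is separably closed and $x, y \in U(k)$ are distinct rational points.

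Next, suppose for contradiction that $[x]$ and $[y]$ are linearly equivalent on $U$, i.e. $\calO_U([x]-[y])$ is trivial in $\Pic(U)$. Viewed as divisors on $C$, both $[x]$ and $[y]$ are effective of degree $1$, so $\calO_C([x]-[y])$ has degree $0$ on $C$ and represents a class in $J(k) = \Pic^0(C)$. By Proposition \ref{picjacseq}, the restriction map $J(k) \hookrightarrow \Pic(U)$ is injective. Since the image of $\calO_C([x]-[y])$ under this map is $\calO_U([x]-[y])$, which by assumption is trivial, the class of $\calO_C([x]-[y])$ in $\Pic^0(C)$ must already be zero.

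Consequently, $[x]$ and $[y]$ are linearly equivalent on $C$, so there exists a rational function $f \in k(C)^\times$ with $\mathrm{div}(f) = [x] - [y]$. Such an $f$ defines a finite morphism $C \to \mathbf{P}^1_k$ of degree $1$ (the divisor of zeros being the single reduced point $x$), hence an isomorphism $C \simeq \mathbf{P}^1_k$. But then $\mathrm{h}^1(C, \calO_C) = 0$, contradicting the assumption that $U$ has positive genus. There is no real obstacle in this argument; it is essentially immediate from Proposition \ref{picjacseq} once one has taken the modest care required by the base change to $k_s$.
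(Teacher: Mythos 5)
Your proof is correct and takes essentially the same route as the paper's: both arguments reduce to $k = k_s$ and then derive a degree-one morphism $C \to \mathbf{P}^1_k$, which forces $C \simeq \mathbf{P}^1_k$ and contradicts positive genus. The only difference is in how the linear equivalence is transferred from $U$ to $C$: you invoke the injectivity of $J(k) \to \Pic(U)$ from Proposition \ref{picjacseq}, whereas the paper extends the rational function $f$ across the single point of $C - U$ by hand and uses $\deg({\rm{div}}(\overline{f})) = 0$ --- two packagings of the same computation.
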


\begin{proof}
We may assume that $k = k_s$. Suppose to the contrary that there is a rational function $f$ on $U$ with divisor $[x] - [y]$. This yields a map $f: U \rightarrow \mathbf{P}^1_k$ such that $f^{-1}(0) = [x]$ and $f^{-1}(\infty) = [y]$ scheme-theoretically. Since $C$ is regular, this extends to a map $\overline{f}: C \rightarrow \mathbf{P}^1_k$, which still has divisor $[x] - [y]$, since ${\rm{deg}}({\rm{div}}(\overline{f})) = 0$, and $C - U$ consists of a single closed point. Then $\overline{f}$ has degree one, hence is an isomorphism, which violates our assumption that $U$ has positive genus.
\end{proof}

\begin{proposition}
\label{pic=/=ext}
Let $U$ be a one-dimensional smooth connected unipotent group over a field $k$ $($i.e., a $k$-form of $\Ga$$)$. Then $\Pic(U) = \Ext^1(U, \Gm)$ if and only if $U \simeq \Ga$, or if $p = 2$ and $U$ is $k$-isomorphic to a group of the form $\{Y^2 = X + aX^2\} \subset \Ga \times \Ga$ for some $a \in k - k^2$.
\end{proposition}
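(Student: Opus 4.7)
The plan is to recast the two classes of groups appearing in the statement via Proposition \ref{positivegenus}: they are precisely the $k$-forms of $\Ga$ of genus zero (equivalently, the $k$-rational ones), so the claim is equivalent to the assertion that $\Pic(U) = \Ext^1(U, \Gm)$ if and only if $U$ has genus zero. For the ``if'' direction, any $k$-rational group is a fortiori $k_s$-unirational, so Theorem \ref{unirationalpic} immediately gives $\Pic(U) = \Ext^1(U, \Gm)$.

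For the ``only if'' direction I would argue the contrapositive and exhibit an explicit obstruction. Assuming $U$ has positive genus, set $\mathscr{L} := \calO([e]) \in \Pic(U)$, where $e \in U(k)$ is the identity, viewed as an effective Weil divisor of degree one on the smooth curve $U$. To show $\mathscr{L} \notin \Ext^1(U, \Gm)$, I would use the translation-invariance reformulation from the introduction and test it at an $S$-valued point with $S = \Spec(k_s)$: since $U_{k_s}$ is a smooth one-dimensional variety over a separably closed field, $U(k_s)$ is infinite, so there exists $u \in U(k_s)$ with $u \neq e$. Computing $t_u^*\mathscr{L}_{k_s} = \calO([-u])$ and noting that $-u \neq e_{k_s}$ (as $e$ is its own inverse), Lemma \ref{notlinequiv}---which is where the positive-genus hypothesis enters in an essential way---gives $[-u] \not\sim [e_{k_s}]$ on $U_{k_s}$, so $t_u^*\mathscr{L}_{k_s} \not\simeq \mathscr{L}_{k_s}$ in $\Pic(U_{k_s}) = \Pic(U_{k_s})/\Pic(k_s)$. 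Hence $\mathscr{L}_{k_s} \notin \Ext^1(U_{k_s}, \Gm)$, and therefore $\mathscr{L} \notin \Ext^1(U, \Gm)$.

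The one slightly delicate point is the implication ``$\mathscr{L} \in \Ext^1(U, \Gm) \Rightarrow \mathscr{L}_{k_s} \in \Ext^1(U_{k_s}, \Gm)$'' invoked in the last step, but this is immediate from base change compatibility: the defining identity $m^*\mathscr{L} \simeq \pi_1^*\mathscr{L} \otimes \pi_2^*\mathscr{L}$ in $\Pic(U \times U)$ pulls back to the analogous identity in $\Pic(U_{k_s} \times U_{k_s})$. Beyond this, the argument is an elementary manipulation of divisors modulo linear equivalence on a smooth curve, so I do not foresee any genuine obstacle; essentially all of the substance is already packaged into Lemma \ref{notlinequiv} (for the positive-genus case) and Theorem \ref{unirationalpic} (for the genus-zero case).
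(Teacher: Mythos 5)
Your proposal is correct, and your treatment of the positive-genus direction is essentially the paper's own argument: reduce to $k = k_s$ (justified, as you note, by the base-change compatibility of the identity $m^*\mathscr{L} \simeq \pi_1^*\mathscr{L}\otimes\pi_2^*\mathscr{L}$), then apply Lemma \ref{notlinequiv} to the divisor of the identity point to show $t_u^*\mathscr{L} \not\simeq \mathscr{L}$ for $u \neq 0$. Where you genuinely diverge is the genus-zero direction. The paper does not invoke Theorem \ref{unirationalpic} there; it instead shows that $\Pic(U\times U) \rightarrow \Pic((U\times U)_{k_s})$ is injective (since $U\times U$ is unipotent it has no nontrivial $k_s$-characters, so \cite[Lem.\,6.3(ii) and 6.5(iii)]{sansuc} apply), reduces to $k = k_s$, observes that $\Jac(C)=0$ forces $\Pic(U)$ to be finite via Proposition \ref{picjacseq}, and then concludes by Theorem \ref{picismultiplicativeforgroupswithfinitepic}(ii). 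Your route --- genus zero implies $k$-rational by Proposition \ref{positivegenus}, hence $U_{k_s}$ is rational and a fortiori unirational, so Theorem \ref{unirationalpic} gives the equality directly --- is shorter and perfectly legitimate, since Gabber's theorem is fully proved in \S\ref{sectionunirational} before this proposition and does not depend on it. What the paper's longer route buys is independence from the unirationality theorem: it runs entirely on the \S\ref{sectionfinitepic} machinery for groups with finite Picard group, which is part of what that proposition is meant to illustrate. Both arguments are sound.
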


\begin{proof}
By Proposition \ref{positivegenus}, the proposition is the same as saying that $\Pic(U) = \Ext^1(U, \Gm)$ if and only if $U$ has genus $0$. First suppose that $U$ has genus $0$. We want to prove that $\Pic(U) = \Ext^1(U, \Gm)$. We claim that the map $\Pic(U \times U) \rightarrow \Pic((U \times U)_{k_s})$ is injective. Indeed, letting $W := U \times U$, then since $W$ is unipotent, $\widehat{W}(k_s) = 0$, so the claim follows from \cite[Lem.\,6.3(ii) and 6.5(iii)]{sansuc}.

It therefore suffices to check the assertion for genus $0$ groups after extending scalars to $k_s$, since this preserves the genus of $U$ (since \'etale base change preserves the regularity of $C$). But then $J = 0$ by Proposition \ref{jac=unipotent}, so $\Pic(U)$ is finite by Proposition \ref{picjacseq}. By Theorem \ref{picismultiplicativeforgroupswithfinitepic}(ii), therefore (which was proved in \S \ref{sectionfinitepic}), $\Pic(U) = \Ext^1(U, \Gm)$.

Conversely, suppose that $U$ has positive genus. We claim that the line bundle $\mathscr{L} := \mathcal{O}([0])$ corresponding to the divisor $0 \in U(k)$ is not in $\Ext^1(U, \Gm)$. This may be checked after extending scalars to $k_s$. Since $U_{k_s}$ has the same genus as $U$, therefore, we may assume that $k = k_s$. In particular, $U(k) \neq \{0\}$. Lemma \ref{notlinequiv} then implies that for any $0 \neq u \in U(k)$, $t_u^*\mathscr{L} \not \simeq \mathscr{L}$, where $t_u: U \rightarrow U$ is translation by $u$. Therefore, $\mathscr{L}$ is not translation-invariant, hence not in $\Ext^1(U, \Gm)$.
\end{proof}

We now turn to the construction of various pathologies. 

\begin{example}
Let $U$ be a $k$-form of $\Ga$ such that $\Pic(U)$ is infinite. Proposition \ref{infiniteextseparablyclosed} provides such examples over every local function field and every imperfect separably closed field. We will use $U$ to construct a commutative pseudo-reductive $k$-group with infinite Picard group.

Because it has infinite Picard group, the group $U$ is $k$-wound. By \cite[Cor.\,9.5]{totaro}, there is a (smooth connected) commutative pseudo-reductive $k$-group $G$ such that we have an exact sequence
\[
1 \longrightarrow T \longrightarrow G \longrightarrow U \longrightarrow 1
\]
with $T$ a $k$-torus. We claim that ${\rm{Pic}}(G)$ is infinite. Indeed, by \cite[Cor.\,6.11]{sansuc} (the statement of that corollary assumes that the groups are all reductive, but the proof only uses the reductivity of the kernel $G'$), we have an exact sequence
\[
\widehat{G}(k) \longrightarrow \widehat{T}(k) \longrightarrow {\rm{Pic}}(U) \longrightarrow {\rm{Pic}}(G).
\]
Since $\mbox{coker}\left(\widehat{G}(k) \rightarrow \widehat{T}(k)\right)$ is finite (Lemma \ref{finitecokernelchars}) and ${\rm{Pic}}(U)$ is infinite, ${\rm{Pic}}(G)$ must be infinite as well.
\end{example}

\begin{example}
\label{zariskidenseneeded}
Let us construct an example to show that the hypothesis that $G(k)$ is Zariski dense in $G$ is necessary in Theorem \ref{picismultiplicativeforgroupswithfinitepic}(i), and therefore the corresponding hypothesis for $H$ is necessary in Theorem \ref{Pic(GXH)}(i).

By Propositions \ref{pic=/=ext} and \ref{picjacseq}, to construct a $k$-form $U$ of $\Ga$ such that $\Pic(U)$ is finite but not equal to $\Ext^1(U, \Gm)$ is the same as giving $U$ of positive genus such that $J(k)$ is finite.

If $k$ is an arbitrary imperfect field, then there may not exist such $U$. For example, if $k$ is an imperfect separably closed field, then $J(k)$ must be infinite if $J \neq 0$. The same holds if $k$ is a local function field, because $J(k)$ is a positive-dimensional Lie group over $k$.

But let $k$ be a global function field, and $U$ a wound $k$-form of $\Ga$ such that the genus $g$ of $U$ satisfies $0 < g < p-1$. Then we claim that $J(k)$ is finite, and so $\Pic(U)$ is finite but not equal to $\Ext^1(U, \Gm)$. Indeed, $J$ is a wound unipotent group of dimension $g < p-1$ by Proposition \ref{jac=unipotent}, so by \cite[Ch.\,VI, \S3.1, Thm.]{oesterle}, $J(k)$ is finite. Here is an example when ${\rm{char}}(k) = 3$. Consider the group $U := \{Y^3 = X + aX^3\} \subset \Ga \times \Ga$ with $a \in k - k^3$. Then one may check as before that the projectivization $\{Y^3 = XZ^2 + aX^3\} \subset \mathbf{P}^2_k$ is regular, hence yields the regular compactification of $U$. Further, one may compute its genus to be $1 < 3 - 1$.
\end{example}

Let us give some applications to the cohomology of local and global fields; in particular, we will show that the groups ${\rm{H}}^1(k, U)$ may exhibit certain pathologies over such fields. Recall that if $k$ is a local field of characteristic $0$, then for any affine commutative $k$-group scheme $G$ of finite type, the group ${\rm{H}}^1(k, G)$ is finite.
This is completely false over local function fields, e.g., for $G = \alpha_p$, $\mu_p$, or $\Z/p\Z$. But one would like to give smooth connected examples. The following proposition provides a plethora of such groups.

\begin{proposition}
\label{H^1infinite}
Let $k$ be a local field of positive characteristic, and let $G$ be a connected commutative linear algebraic $k$-group. Then ${\rm{H}}^1(k, G)$ is finite if and only if $\Ext^1(G, \Gm)$ is finite. In particular, if $U$ is a $k$-form of $\Ga$, then ${\rm{H}}^1(k, U)$ is finite if and only if $U \simeq \Ga$, or if $p = 2$ and $U \simeq \{Y^2 = X + aX^2\}$ for some $a \in k- k^2$. If $U$ is $k$-wound, then ${\rm{H}}^1(k, U) \neq 0$.
\end{proposition}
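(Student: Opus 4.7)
My plan is to derive all three assertions from the local form of positive-dimensional Tate duality. For a commutative linear algebraic $k$-group $G$ over a local function field $k$, using the identification of the Yoneda Ext group with the derived functor Ext (Proposition \ref{derivedextagrees}) and $\Ext^1(G, \Gm) \simeq {\rm{H}}^1(k, \widehat{G})$ for $\widehat{G} := \calHom(G, \Gm)$ (as in the proof of Theorem \ref{extisfinite}), the local Tate duality of \cite{rostateduality} should furnish a continuous perfect pairing
\[
{\rm{H}}^1(k, G) \times \Ext^1(G, \Gm) \longrightarrow \Q/\Z
\]
of locally compact abelian groups, realizing the two groups as Pontryagin duals. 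Since a locally compact Hausdorff abelian group is finite if and only if its Pontryagin dual is finite, this yields the equivalence ``${\rm{H}}^1(k, G)$ finite $\iff \Ext^1(G, \Gm)$ finite'' in one stroke.

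The specific classification for a $k$-form $U$ of $\Ga$ then follows immediately by combining this equivalence with Proposition \ref{infiniteextseparablyclosed}, which enumerates precisely those $U$ for which $\Ext^1(U, \Gm)$ is finite over local function fields.

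For the last assertion, suppose $U$ is $k$-wound. By the Pontryagin duality above it suffices to prove $\Ext^1(U, \Gm) \neq 0$, and I would split into cases by the genus $g$ of $U$. If $g > 0$, Proposition \ref{extcontainsavariety} furnishes a smooth connected positive-dimensional closed $k$-subgroup $G \subset \Jac(C)$ equipped with an inclusion $G(k) \hookrightarrow \Ext^1(U, \Gm)$. Over a local function field the $k$-points of a smooth positive-dimensional $k$-scheme form a positive-dimensional $k$-analytic manifold, hence are infinite, so $G(k) \neq 0$. If $g = 0$, then since $U$ is wound we have $U \not\simeq \Ga$, so Proposition \ref{positivegenus} puts us in the case $p = 2$ with $U \simeq \{Y^2 = X + aX^2\}$ for some $a \in k - k^2$. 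Here Proposition \ref{pic=/=ext} gives $\Pic(U) = \Ext^1(U, \Gm)$, while Propositions \ref{picjacseq} and \ref{C-U} together show $\Pic(U) = \Z/p^n\Z$ with $p^n \ge 2$ the degree of the (non-$k$-rational) point at infinity of $C$. Hence $\Ext^1(U, \Gm) \neq 0$.

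The main obstacle is the clean invocation of local Tate duality in the correct topological form: what is needed is a perfect pairing of locally compact abelian groups, not of abstract groups, since it is only this topological Pontryagin duality that transfers finiteness in both directions. Granted that input from \cite{rostateduality}, the remaining steps reduce cleanly to earlier results of the paper.
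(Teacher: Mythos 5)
Your proposal is correct and follows essentially the same route as the paper: local Tate duality from \cite{rostateduality} identifies ${\rm{H}}^1(k,G)$ with the Pontryagin dual of ${\rm{H}}^1(k,\widehat{G}) \simeq \Ext^1(G,\Gm)$, the classification for forms of $\Ga$ then comes from Proposition \ref{infiniteextseparablyclosed}, and the wound case reduces to showing $\Ext^1(U,\Gm)\neq 0$ for the genus-$0$ conic via Propositions \ref{pic=/=ext}, \ref{picjacseq}, and \ref{C-U}. Your explicit genus dichotomy in the last step merely unwinds what the paper packages into its citation of Proposition \ref{infiniteextseparablyclosed}, so there is no substantive difference.
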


\begin{proof}
The second assertion follows from the first and Proposition \ref{infiniteextseparablyclosed}.
To prove 
the first and last assertion, we use the fact that local Tate duality implies that the groups ${\rm{H}}^1(k, G)$ 
and ${\rm{H}}^1(k, \widehat{G})$ are Pontryagin dual \cite[Thm.\,1.2.4]{rostateduality}. We further have an isomorphism ${\rm{H}}^1(k, 
\widehat{G}) \simeq \Ext^1(G, \Gm)$ \cite[Cor.\,2.3.4]{rostateduality}. (Recall that the functorial Ext and our Ext agree in this setting, by Proposition \ref{derivedextagrees}.) The first assertion of the lemma follows immediately. 

To prove the last assertion, we need to show that when $p = 2$ and $U \simeq \{Y^2 = X + aX^2\}$ for some $a \in k- k^2$, then $\Ext^1(U, \Gm) \neq 0$. By Proposition \ref{pic=/=ext}, it suffices to show that $\Pic(U) \neq 0$. This follows from the woundness of $U$ and Propositions \ref{picjacseq} and \ref{C-U}.
\end{proof}

\begin{remark}
More generally, N.\,Duy T$\hat{{\rm{a}}}$n determined precisely which unipotent groups over local fields of positive characteristic have finite ${\rm{H}}^1$ \cite[Thm.\,10]{duy}.
\end{remark}

Let $G$ be a group scheme over a global field $k$, and let $S$ be a finite set of places of $k$. Then we define the pointed set $\Sha_S(G)$ (often also denoted $\Sha^1_S(G)$) in the usual manner:
\[
\Sha_S(G) := \ker \left( {\rm{H}}^1(k, G) \longrightarrow \prod_{v \notin S} {\rm{H}}^1(k_v, G) \right).
\]
If $k$ is a number field, then this set is finite for any affine $k$-group scheme of finite type \cite[Thm.\,7.1]{borelserre}. This finiteness also holds in the function field setting if $S = \emptyset$ \cite[Thm.\,1.3.3(i)]{conrad}. It is not generally true, however, for nonempty $S$ (contrary to the original version of \cite[Thm.\,1.3.3(i)]{conrad}), as we show in Corollary \ref{formGainfinitesha} below.

\begin{remark}
\label{conraderrata}
The mistake in the proof of \cite[Thm.\,1.3.3(i)]{conrad} creeps in when invoking \cite[Chap.\,IV, \S2.6, Prop.\,(a)]{oesterle} at the very end of \S6.3 and in the first paragraph of \S6.4. In fact, Oesterl\'e's result only asserts the finiteness of $\Sha^1(G)$ (for $G$ a smooth connected affine solvable group over a global field); that is, it only treats the case $S = \emptyset$. For this reason, all of the arguments in \cite{conrad}, and particularly \cite[Thm.\,1.3.3(i)]{conrad}, are valid when $S = \emptyset$. But as we are about to show, things go haywire when $S \neq \emptyset$.
\end{remark}

\begin{proposition}
\label{infiniteSha}
Let $G$ be a connected commutative linear algebraic group over a global function field $k$, and let $S$ be a finite set of places of $k$. Then the group $\Sha^1_S(G)$ is finite if and only if $\Ext^1_{k_v}(G, \Gm)$ is finite for all $v \in S$.
\end{proposition}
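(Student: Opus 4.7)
The plan is to combine the positive-dimensional global Tate duality sequence from \cite[Thm.\,1.2.8]{rostateduality} with the finiteness of $\Ext^1(G, \Gm)$ established in Theorem \ref{extisfinite}. Adopting the convention $B^* := {\rm{Hom}}(B, \Q/\Z)$ from the proof of Theorem \ref{extisfinite}, the duality furnishes an exact sequence
\[
H^1(k, G) \longrightarrow \bigoplus_v H^1(k_v, G) \longrightarrow \Ext^1(G, \Gm)^* \longrightarrow H^2(k, G),
\]
where the direct sum description of $H^1(\A, G)$ uses \cite[Prop.\,5.2.2]{rostateduality} (valid since $G$ is smooth and connected) and $H^1(k, \widehat{G})$ is identified with $\Ext^1(G, \Gm)$ via \cite[Cor.\,2.3.4]{rostateduality} together with Proposition \ref{derivedextagrees}. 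Write $I$ for the image of $H^1(k, G)$ in $\bigoplus_v H^1(k_v, G)$.

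The crucial observation is that $\bigoplus_v H^1(k_v, G)/I$ is finite: by exactness it embeds into $\Ext^1(G, \Gm)^*$, which is finite because $\Ext^1(G, \Gm)$ is so by Theorem \ref{extisfinite}. Consequently the composition
\[
\bigoplus_{v \in S} H^1(k_v, G) \hooklongrightarrow \bigoplus_v H^1(k_v, G) \twoheadrightarrow \bigoplus_v H^1(k_v, G)/I
\]
has kernel $I \cap \bigoplus_{v \in S} H^1(k_v, G)$ and image inside a finite group, so $\bigoplus_{v \in S} H^1(k_v, G)$ is finite if and only if $I \cap \bigoplus_{v \in S} H^1(k_v, G)$ is finite.

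To conclude, observe that $\Sha^1_S(G)$ is the preimage in $H^1(k, G)$ of the subgroup $\bigoplus_{v \in S} H^1(k_v, G) \subset \bigoplus_v H^1(k_v, G)$, which yields a short exact sequence
\[
0 \longrightarrow \Sha^1(G) \longrightarrow \Sha^1_S(G) \longrightarrow I \cap \bigoplus_{v \in S} H^1(k_v, G) \longrightarrow 0.
\]
Since $\Sha^1(G)$ is finite by \cite[Thm.\,1.3.3(i)]{conrad}, finiteness of $\Sha^1_S(G)$ is equivalent to that of $I \cap \bigoplus_{v \in S} H^1(k_v, G)$, hence by the preceding paragraph to that of $\bigoplus_{v \in S} H^1(k_v, G)$; as $S$ is finite, this amounts to finiteness of $H^1(k_v, G)$ for each $v \in S$. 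Finally, local Tate duality \cite[Thm.\,1.2.4]{rostateduality} identifies $H^1(k_v, G)$ with the dual of $H^1(k_v, \widehat{G}) = \Ext^1_{k_v}(G, \Gm)$, so these two groups are simultaneously finite. The main conceptual step is thus the use of Theorem \ref{extisfinite} to force finiteness of $\bigoplus_v H^1(k_v, G)/I$; the rest is straightforward bookkeeping with the exact sequences.
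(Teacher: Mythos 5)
Your argument is correct and takes essentially the same route as the paper: global Tate duality from \cite[Thm.\,1.2.8]{rostateduality} together with Theorem \ref{extisfinite} and the finiteness of $\Sha^1(G)$ reduces everything to the finiteness of ${\rm{H}}^1(k_v, G)$ for $v \in S$, which local duality converts into finiteness of $\Ext^1_{k_v}(G, \Gm)$. The only cosmetic difference is that you extract the comparison between $\Sha^1_S(G)$ and $\bigoplus_{v \in S} {\rm{H}}^1(k_v, G)$ by hand from the sequence ${\rm{H}}^1(k,G) \to {\rm{H}}^1(\A,G) \to {\rm{H}}^1(k,\widehat{G})^* \to {\rm{H}}^2(k,G)$, whereas the paper quotes the exact sequence $\Sha^1(G) \to \Sha^1_S(G) \to \prod_{v \in S}{\rm{H}}^1(k_v,G) \to {\rm{H}}^1(k,\widehat{G})^*$ directly from the same duality theorem and then invokes Proposition \ref{H^1infinite} for the final local step, which you re-derive.
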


\begin{proof}
Let $\A^S$ denote the ring of $S$-adeles, i.e., the subset of $\prod_{v \notin S} k_v$ defined by the usual restricted product condition. Then \cite[Prop.\,5.2.2]{rostateduality} implies that 
$$\Sha^1_S(G) = \ker \left( {\rm{H}}^1(k, G) \longrightarrow {\rm{H}}^1(\A^S, G) \right).$$
Global Tate duality \cite[Thm.\,1.2.8]{rostateduality} furnishes an exact sequence
\[
\Sha^1(G) \longrightarrow \Sha^1_S(G) \longrightarrow \prod_{v \in S} {\rm{H}}^1(k_v, G) \longrightarrow {\rm{H}}^1(k, \widehat{G})^*,
\]
where, as before, $\widehat{G} := \calHom(G, \Gm)$ denotes the fppf $\Gm$-dual sheaf of $G$, and for an abelian group $B$, $B^* := {\rm{Hom}}(B, \Q/\Z)$. The group $\Sha^1(G)$ is finite \cite[Chap.\,IV, \S2.6, Prop.\,(a)]{oesterle}, and the group ${\rm{H}}^1(k, \widehat{G})$ is isomorphic to $\Ext^1(G, \Gm)$ \cite[Cor.\,2.3.4]{rostateduality}, which is also finite by Theorem \ref{extisfinite}. It follows that $\Sha^1_S(G)$ is finite if and only if ${\rm{H}}^1(k_v, G)$ is finite for all $v \in S$. By Proposition \ref{H^1infinite}, these groups are finite if and only if the groups $\Ext^1_{k_v}(G, \Gm)$ are finite for all $v \in S$.
\end{proof}

The following result shows that, in contrast to the number field setting, the set $\Sha_S(G)$ can be infinite even for smooth connected affine groups over global function fields if $S \neq \emptyset$. (This cannot happen if $S = \emptyset$; see Remark \ref{conraderrata}.)

\begin{corollary}
\label{formGainfinitesha}
Let $k$ be a global function field, $U$ a nontrivial $k$-form of $\Ga$. If char$(k) = 2$, then further assume that $U$ is not isomorphic to a conic in the affine plane $($equivalently, by Proposition $\ref{positivegenus}$, $U$ has positive genus$)$. Let $S$ be a nonempty set of places of $k$. Then $\Sha^1_S(U)$ is infinite.
\end{corollary}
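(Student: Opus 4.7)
The strategy is to invoke Proposition \ref{infiniteSha}, which says that $\Sha^1_S(U)$ is infinite if and only if $\Ext^1_{k_v}(U_{k_v}, \Gm)$ is infinite for some $v \in S$. To handle any nonempty $S$ uniformly, I will prove the stronger assertion that $\Ext^1_{k_v}(U_{k_v}, \Gm)$ is infinite for \emph{every} place $v$ of $k$. By Proposition \ref{infiniteextseparablyclosed} applied over the local function field $k_v$, combined with the genus-$0$ characterization of Proposition \ref{positivegenus}, this amounts to showing that the $k_v$-form $U_{k_v}$ of $\Ga$ has positive genus over $k_v$.

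The remaining task is thus to check that the positive genus of $U$ over $k$ (guaranteed by the hypotheses, including the exclusion of affine conics when $\mathrm{char}(k) = 2$) persists after base change to $k_v$. Let $C$ denote the regular compactification of $U$. The key input is that the field extension $k_v/k$ is separable: the local ring $\mathcal{O}_v \subset k$ at $v$ is an excellent Noetherian local ring, so its completion map $\mathcal{O}_v \to \widehat{\mathcal{O}_v}$ is a regular homomorphism, and hence the induced extension $k \hookrightarrow k_v$ of fraction fields is geometrically regular, i.e., separable. Separability of $k_v/k$ then ensures that base change along $\mathrm{Spec}\,k_v \to \mathrm{Spec}\,k$ preserves regularity of locally Noetherian schemes over $k$, so $C_{k_v}$ is a regular proper $k_v$-curve containing $U_{k_v}$ as a dense open; by uniqueness, $C_{k_v}$ is the regular compactification of $U_{k_v}$. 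Flat base change for coherent cohomology then gives $h^1(C_{k_v}, \mathcal{O}_{C_{k_v}}) = h^1(C, \mathcal{O}_C) > 0$, so $U_{k_v}$ indeed has positive genus.

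The main obstacle is the step asserting that $C_{k_v}$ remains regular: regularity of schemes over imperfect fields is not preserved under arbitrary ground-field extensions, so the argument crucially hinges on the separability of $k_v/k$ just noted. Once this is secured, the remaining ingredients---Propositions \ref{infiniteSha}, \ref{infiniteextseparablyclosed}, and \ref{positivegenus} together with flat base change---assemble the conclusion.
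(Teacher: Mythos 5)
Your proposal is correct and follows essentially the same route as the paper: reduce via Proposition \ref{infiniteSha} (after passing to a finite nonempty subset of $S$) and Proposition \ref{infiniteextseparablyclosed} to the preservation of positive genus under $k \to k_v$, which holds because $k_v/k$ is separable. Your justification of that separability via excellence, and of the genus invariance via regularity under separable base change plus flat base change for cohomology, simply fills in details the paper leaves to a citation of Matsumura.
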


\begin{proof}
We may assume that $S$ is finite. Then this is an immediate consequence of Propositions \ref{infiniteSha} and \ref{infiniteextseparablyclosed}, together with the fact that the genus of $U$ is preserved upon passage from $k$ to $k_v$, since $k_v$ is a (non-algebraic) separable extension of $k$. (This means that $k_v$ is a filtered direct limit of smooth $k$-algebras; for other equivalent definitions of separability, see \cite[\S 27, Lemma 3]{matsumura}.)
\end{proof}

\appendix

\section{Characters}

For a $k$-group scheme $G$, $\widehat{G}(k)$ denotes the group Hom$(G, \Gm)$ of characters of $G$ defined over $k$. The following result is \cite[Prop.\,A.1.4]{oesterle}. We state it here for convenience.

\begin{lemma}
\label{finitecokernelchars}
Let $G$ be a connected linear algebraic $k$-group, and $H \trianglelefteq G$ a normal $k$-subgroup scheme. Then the restriction map $\widehat{G}(k) \rightarrow \widehat{H}(k)$ has finite cokernel.
\end{lemma}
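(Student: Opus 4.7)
My plan is to realize the cokernel of the restriction map as the image of a connecting homomorphism into $\Ext^1$ via Lemma \ref{extexactsequence}, and then bound that image by passing to $\overline{k}$, invoking Sansuc's theorem that $\Pic$ is finite for connected linear algebraic groups over perfect fields, and controlling the kernel of the base-change map on $\Ext^1$ by Galois cohomology of a character lattice.

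First I would observe that, since $H$ is a finite-type $k$-group scheme (hence flat over the field $k$), the fppf quotient $Q := G/H$ exists as a $k$-group scheme; as $G \twoheadrightarrow Q$ is fppf and $G$ is smooth and connected, so is $Q$. Applying to $1 \to H \to G \to Q \to 1$ the portion of Lemma \ref{extexactsequence} that does not require smoothness of the kernel subgroup, I would extract the exact sequence
\[
\widehat{G}(k) \longrightarrow \widehat{H}(k) \xrightarrow{\partial} \Ext^1(Q, \Gm),
\]
so that the cokernel of the restriction map is isomorphic to $\mathrm{im}(\partial)$, and the task becomes showing that $\mathrm{im}(\partial)$ is finite.

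Next I would form the base-change square
\[
\begin{array}{ccc}
\widehat{H}(k) & \xrightarrow{\partial} & \Ext^1(Q, \Gm) \\
\downarrow & & \downarrow \beta \\
\widehat{H}(\overline{k}) & \xrightarrow{\partial_{\overline{k}}} & \Ext^1(Q_{\overline{k}}, \Gm)
\end{array}
\]
and observe that $\mathrm{im}(\partial_{\overline{k}}) \subseteq \Pic(Q_{\overline{k}})$, which is finite by Sansuc's theorem since $\overline{k}$ is perfect. Hence $\mathrm{im}(\partial) \subseteq \beta^{-1}(\mathrm{im}(\partial_{\overline{k}}))$, so it remains to bound $\ker(\beta)$.

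Finally, the elements of $\ker(\beta)$ are $k$-forms of the trivial extension $\Gm \times Q$ of $Q$ by $\Gm$, classified by $\mathrm{H}^1(k, \widehat{Q})$, where $\widehat{Q} := \calHom(Q, \Gm)$ is the automorphism group scheme of the trivial extension. Since $Q$ is smooth and connected, $\widehat{Q}$ is represented by the \'etale $k$-group scheme Cartier dual to the maximal toral quotient of $Q$, so $\widehat{Q}$ corresponds to a finitely generated free abelian group on which $\mathrm{Gal}(k_s/k)$ acts continuously through a finite quotient (the splitting field of that torus). A routine inflation-restriction computation then yields $\mathrm{H}^1(k, \widehat{Q})$ finite, whence $\mathrm{im}(\partial)$ sits inside an extension of two finite groups and is itself finite. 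The step I expect to require the most care is the identification of $\ker(\beta)$ with $\mathrm{H}^1(k, \widehat{Q})$ together with the verification that this $\mathrm{H}^1$ is finite; the remainder of the argument is diagram chasing and the invocation of already-cited machinery.
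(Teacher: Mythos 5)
The paper does not actually prove this lemma; it simply quotes it from Oesterl\'e \cite[Prop.\,A.1.4]{oesterle}, so there is no internal argument to compare against. The first two thirds of your proposal are sound: forming $Q := G/H$, applying the portion of Lemma \ref{extexactsequence} that needs only smoothness and connectedness of $G$ to identify the cokernel with the image of $\partial$ inside $\Ext^1(Q, \Gm)$, and noting that $\Ext^1(Q_{\overline{k}}, \Gm) \subset \Pic(Q_{\overline{k}})$ is finite by Sansuc since $\overline{k}$ is perfect. In particular you have correctly proved the lemma after base change to $\overline{k}$.

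The last step, however, has a genuine gap that cannot be patched as written. The automorphism sheaf of the trivial extension is indeed $\widehat{Q} = \calHom(Q, \Gm)$, but this fppf sheaf is \emph{not} the \'etale group scheme dual to the maximal toral quotient of $Q$ once $Q$ has a nontrivial unipotent quotient: the identification of characters with characters of the multiplicative quotient holds over fields but fails over non-reduced rings (e.g.\ $x \mapsto 1 + \epsilon x$ is a nontrivial homomorphism $\Ga \to \Gm$ over $k[\epsilon]/(\epsilon^2)$ in characteristic $p$), so $\widehat{Q}$ has infinitesimal structure, the relevant ${\rm{H}}^1$ is fppf (not Galois) cohomology --- note also that $\overline{k}/k$ is not a Galois cover when $k$ is imperfect --- and it need not be finite. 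Concretely, the group $\ker(\beta)$ you are trying to bound is infinite in examples the paper itself constructs: for $U$ a positive-genus $k$-form of $\Ga$ over a local function field or an imperfect separably closed field (take $Q = U$, e.g.\ with $H = 1$), Proposition \ref{infiniteextseparablyclosed} gives that $\Ext^1(U, \Gm)$ is infinite, while $U_{\overline{k}} \simeq \Ga$ so that $\Ext^1(U_{\overline{k}}, \Gm) \subset \Pic(\A^1_{\overline{k}}) = 0$; hence $\ker(\beta) = \Ext^1(U, \Gm)$ is infinite. Since the lemma is asserted over arbitrary fields, including exactly these, exhibiting the image of $\partial$ as an extension of a finite group by $\ker(\beta)$ proves nothing. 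Any repair must use the fact that the image of $\partial$ consists of pushouts of the single fixed extension $1 \to H \to G \to Q \to 1$ along characters of $H$ (for instance, combining your $\overline{k}$-finiteness with Lemma \ref{finitequotientchars} and the finiteness of ${\rm{H}}^1({\rm{Gal}}(k_s/k), \widehat{Q}(k_s))$ to descend an extension of a bounded power of each character), rather than bounding all of $\ker(\beta)$.
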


\begin{lemma}
\label{finitelygenerated}
If $G$ is a $k$-group scheme of finite type, then $\widehat{G}(k)$ is finitely generated.
\end{lemma}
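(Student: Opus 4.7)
The plan is to proceed by dévissage, reducing to the case of a torus, where the statement is immediate. First, the natural inclusion $\widehat{G}(k) \hookrightarrow \widehat{G_{\overline{k}}}(\overline{k})$, together with the fact that subgroups of finitely generated abelian groups are finitely generated, allows us to extend scalars and assume $k = \overline{k}$.

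Next, I would strip off the non-smooth and non-connected pieces. Let $G^0 \subset G$ denote the identity component; since $k$ is now perfect, the reduced closed subscheme $(G^0)_{\mathrm{red}}$ is a smooth connected closed subgroup scheme of $G$, and the quotient $F := G/(G^0)_{\mathrm{red}}$ is a finite $k$-group scheme. The character group $\widehat{F}(k)$ is finite, since any character of $F$ factors through its abelianization and the resulting Cartier dual is again a finite $k$-group scheme. The left exact sequence
\[
0 \longrightarrow \widehat{F}(k) \longrightarrow \widehat{G}(k) \longrightarrow \widehat{(G^0)_{\mathrm{red}}}(k)
\]
then reduces the finite generation of $\widehat{G}(k)$ to that of $\widehat{(G^0)_{\mathrm{red}}}(k)$, so we may assume $G$ is smooth and connected.

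Now Chevalley's Theorem supplies an exact sequence $1 \to H \to G \to A \to 1$ with $H$ a smooth connected linear algebraic group and $A$ an abelian variety. Since any morphism from a proper variety to an affine scheme is constant on geometrically connected fibers, $\widehat{A}(k) = 0$, and restriction furnishes an injection $\widehat{G}(k) \hookrightarrow \widehat{H}(k)$; we may thus assume $G$ is itself a smooth connected linear algebraic group. As $\Gm$ is commutative, every character of $G$ kills $\mathscr{D}G$, so $\widehat{G}(k) = \widehat{G^{\mathrm{ab}}}(k)$ with $G^{\mathrm{ab}} := G/\mathscr{D}G$ smooth connected commutative. Over $\overline{k}$ this group splits as $T \times U$ with $T$ a torus and $U$ unipotent, and since unipotent groups admit no nontrivial characters to $\Gm$, we get $\widehat{G^{\mathrm{ab}}}(k) = \widehat{T}(k) \cong \Z^{\dim T}$, which is finitely generated. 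The main point requiring care is the reduction to $G$ smooth and connected; but once that is done and we are over an algebraically closed field, Chevalley's Theorem and the structure theory for commutative linear algebraic groups make the remaining steps explicit.
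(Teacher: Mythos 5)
Your overall dévissage is the same as the paper's: reduce to $k=\overline{k}$, strip off the finite/infinitesimal and disconnected pieces, apply Chevalley's Theorem, kill the derived group and the unipotent part, and land on a torus. The Chevalley step, the vanishing of $\widehat{A}(k)$ for an abelian variety, and the commutative linear case are all fine and match the paper.

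There is, however, a genuine flaw in your reduction to the smooth connected case. You assert that $F := G/(G^0)_{\mathrm{red}}$ is a finite $k$-group scheme, but this quotient need not exist as a group scheme: over a perfect field, $(G^0)_{\mathrm{red}}$ is indeed a smooth subgroup scheme, but it need \emph{not} be normal in $G$ (nor even in $G^0$). For example, over $k=\overline{\mathbf{F}_p}$ take $G = \alpha_p \rtimes \Gm$ with $\Gm$ acting on $\alpha_p$ by scaling; here $G$ is connected and $G_{\mathrm{red}} = \{0\}\times\Gm$, and conjugating $(0,t)$ by $(a,1)$ gives $(a-ta,\,t)$, which does not lie in $G_{\mathrm{red}}$, so $G_{\mathrm{red}}$ is not normal and your exact sequence with a finite $F$ is not available. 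The gap is repairable: a character of $G$ trivial on $(G^0)_{\mathrm{red}}$ kills the normal closure $N$ of $(G^0)_{\mathrm{red}}$ in $G$, and since $\dim N \geq \dim (G^0)_{\mathrm{red}} = \dim G$, the quotient $G/N$ \emph{is} a finite group scheme; hence the kernel of $\widehat{G}(k)\to\widehat{(G^0)_{\mathrm{red}}}(k)$ is still finite and the reduction goes through. The paper sidesteps the issue in the opposite order: it first quotients $G$ by a \emph{normal} infinitesimal subgroup $I$ with $G/I$ smooth (citing \cite[VII$_{\rm{A}}$, Prop.\,8.3]{sga3}), and only then passes to the identity component (which is always normal). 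Either fix works, but as written your step fails.
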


\begin{proof}
We may assume that $k = \overline{k}$. Given a short exact sequence
\[
1 \longrightarrow G' \longrightarrow G \longrightarrow G'' \longrightarrow 1
\]
of finite type $k$-group schemes, we get an exact sequence
\[
0 \longrightarrow \widehat{G''}(k) \longrightarrow \widehat{G}(k) \longrightarrow \widehat{G'}(k),
\]
so if the Lemma holds for $G'$ and $G''$, then it holds for $G$. The lemma is clear for finite group schemes. By \cite[VII$_{\rm{A}}$, Prop.\,8.3]{sga3}, there is a normal infinitesimal $k$-subgroup scheme $I \trianglelefteq G$ such that $G/I$ is smooth. We may therefore assume that $G$ is smooth, and replacing $G$ with $G^0$, we may assume that $G$ is smooth and connected. By Chevalley's Theorem, we may further assume that $G$ is either an abelian variety or a linear algebraic group. In the former case $\widehat{G}(k) = 0$ since $G$ is proper and $\Gm$ is affine. In the latter case, we may replace $G$ with $G/\mathscr{D}G$ (since any character of $G$ factors through this quotient) and thereby assume that $G$ is commutative. Letting $T \subset G$ be the maximal torus, $G/T$ is unipotent and therefore has no nontrivial characters. We may therefore assume that $G = T$ is a torus, and since $k = \overline{k}$, we are reduced to the easy case $G = \Gm$.
\end{proof}

Recall that $k_{{\rm{per}}}$ denotes a perfect closure of $k$.

\begin{lemma}
\label{finitequotientchars}
Let $G$ be a $k$-group scheme of finite type. Then $\widehat{G}(k_{per})/\widehat{G}(k)$ is finite.
\end{lemma}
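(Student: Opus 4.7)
The plan is to combine Lemma \ref{finitelygenerated}, applied over $k_{\rm{per}}$, with a Frobenius-twisting trick showing that every character defined over $k_{\rm{per}}$ has a $p$-power which descends to $k$. In characteristic $0$ there is nothing to prove, since $k_{\rm{per}} = k$, so we may assume ${\rm{char}}(k) = p > 0$. Applying Lemma \ref{finitelygenerated} to the finite type $k_{\rm{per}}$-group scheme $G_{k_{\rm{per}}}$ shows that $\widehat{G}(k_{\rm{per}})$ is finitely generated, hence so is the quotient $\widehat{G}(k_{\rm{per}})/\widehat{G}(k)$. It therefore suffices to exhibit a single integer $N$ such that $p^N \cdot \widehat{G}(k_{\rm{per}}) \subseteq \widehat{G}(k)$, since a finitely generated torsion abelian group is finite.

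To produce such an $N$, choose generators $\chi_1, \dots, \chi_n$ of $\widehat{G}(k_{\rm{per}})$, and view each $\chi_i \colon G_{k_{\rm{per}}} \to \Gm$ via its group-like unit $u_i \in k_{\rm{per}}[G]^{\times} = (k_{\rm{per}} \otimes_k k[G])^{\times}$. Since $G$ is of finite type, each $u_i^{\pm 1}$ has only finitely many nonzero coefficients in any $k$-basis of $k_{\rm{per}}$, so all of them are defined over a common finite purely inseparable subextension $k'/k$ of $k_{\rm{per}}/k$. Fix $N \geq 0$ with $(k')^{p^N} \subseteq k$.

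For the descent step, write $u_i = \sum_j a_j \otimes b_j$ with $a_j \in k'$ and $b_j \in k[G]$. Since we are in characteristic $p$, Frobenius is additive on any commutative ring, so
\[
u_i^{p^N} \;=\; \sum_j a_j^{p^N} \otimes b_j^{p^N} \;=\; 1 \otimes \sum_j a_j^{p^N} b_j^{p^N} \;\in\; 1 \otimes k[G],
\]
because $a_j^{p^N} \in k$. The Hopf algebra structure on $k'[G]$ is the base change of that on $k[G]$, so the group-like relation $\Delta(u_i^{p^N}) = u_i^{p^N} \otimes u_i^{p^N}$ descends along with $u_i^{p^N}$, and consequently $\chi_i^{p^N} \in \widehat{G}(k)$. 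Every $\chi \in \widehat{G}(k_{\rm{per}})$ is a $\mathbf{Z}$-linear combination of the $\chi_i$, so $p^N \chi \in \widehat{G}(k)$, giving the desired annihilator for the quotient.

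The only step that is not entirely formal is the Frobenius identity above, but it is essentially automatic after expanding in a $k$-basis of $k_{\rm{per}}$; the real content of the proof is the combination of finite generation (Lemma \ref{finitelygenerated}) with the fact that pure inseparability forces every character to become defined over $k$ after a single uniform Frobenius twist applied to the finitely many generators.
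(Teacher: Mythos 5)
Your proof is correct and takes essentially the same route as the paper: reduce to characteristic $p$, use Lemma \ref{finitelygenerated} over $k_{\rm{per}}$ to see the quotient is finitely generated, and then show it is $p$-power torsion because every character over $k_{\rm{per}}$ has a $p^n$-th power defined over $k$ (the paper asserts this last step in one line; you supply the Frobenius computation). The only cosmetic caveat is that your Hopf-algebra phrasing implicitly treats $G$ as affine, whereas the lemma is stated for arbitrary finite type $G$; the identical computation goes through in general by viewing a character as a global unit $u$ with $m^*u = \pi_1^*u\cdot\pi_2^*u$ and using that global sections commute with the flat base change $k \to k_{\rm{per}}$.
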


\begin{proof}
We may assume that ${\rm{char}}(k) = p > 0$. The quotient group is finitely generated by Lemma \ref{finitelygenerated}, so in order to show that it is finite, it suffices to show that it is torsion. In fact, given $\chi \in \widehat{G}(k_{{\rm{per}}})$, we have $\chi^{p^n} \in \widehat{G}(k)$ for some $n > 0$.
\end{proof}

Lemma \ref{finitequotientchars} is not true if we replace the extension $k_{{\rm{per}}}/k$ with even arbitrary finite extensions. For example, let $T$ be a $k$-torus with no nontrivial characters over $k$. For example, any nontrivial form of $\Gm$ satisfies this, since any isogeny of tori has an isogeny splitting. This may also be seen by noting that the anti-equivalence between tori and Galois lattices (via $T \mapsto \widehat{T}(k_s)$) implies that any such $T$ corresponds to a nontrivial action of ${\rm{Gal}}(k_s/k)$ on $\Z$, and any such action has no nonzero fixed elements. At any rate, if $L/k$ is a finite separable extension splitting $T$, then $\widehat{T}(L)/\widehat{T}(k)$ is infinite.

\noindent \address
\vspace{.3 in}

\noindent \email

\end{document}